\numberwithin{equation}{section}
\newtheorem{theorem}{Theorem}[section]
\newtheorem{proposition}[theorem]{Proposition}
\newtheorem{corollary}[theorem]{Corollary}
\newtheorem{lemma}[theorem]{Lemma}
\theoremstyle{definition}
\theoremstyle{remark}
\newtheorem{remark}[theorem]{Remark}
\begin{document}

\title[ threshold solutions
 ]
{
  Threshold solutions for the focusing
    $L^{2}$ -supercritical
    NLS Equations
 }

 \author{Qing Guo
}

\address{Academy of Mathematics and Systems Science, Chinese Academy of Sciences, Beijing 100190, P.R. China}

\email{guoqing@amss.ac.cn}





\begin{abstract}
We investigate the $L^2$-supercritical and $\dot{H}^1$-subcritical nonlinear Schr\"{o}dinger equation
in $H^1$. In \cite{G1} and \cite{yuan}, the mass-energy quantity
$M(Q)^{\frac{1-s_{c}}{s_{c}}}E(Q)$ has been shown to be a threshold
for the dynamical behavior of solutions of the equation. In the present paper,
we study the dynamics at the critical level $M(u)^{\frac{1-s_{c}}{s_{c}}}E(u)=M(Q)^{\frac{1-s_{c}}{s_{c}}}E(Q)$
and classify the corresponding solutions using modulation theory,
non-trivially generalize  the results obtained in \cite{holmer3}
for
the 3D cubic Schr\"{o}dinger equation.

\end{abstract}

\maketitle
MSC: 35Q55, 35A15,
35B30.\\

Keywords:  Schr\"{o}dinger equation; $L^2$-supercritical;  Threshold solution; Linearized operator

\section{Introduction}

We consider the following Cauchy problem of a nonlinear Schr\"{o}dinger equation 

\begin{equation}\label{1.1}
\left\{ \begin{aligned}
         \ iu_{t}+\Delta u+|u|^{p-1}u&=0,\ \ \ (x,t)\in R^{N}\times R, \\
                  \ u(x,0)&=u_{0}(x)\in H^{1}(R^{N}).
                          \end{aligned}\right.
                          \end{equation}

It is well known from  \cite{JG1} and \cite{TC} that, equation \eqref{1.1} is locally well-posed in $ H^{1}.$
That is for  $u_{0}\in H^{1},$ there exist $0<T\leq\infty$ and a unique solution $u(t)\in C([0,T);H^{1})$ to   \eqref{1.1}.
When  $T=\infty,$   we say that the solution is positively  global; while on the other hand,  we  have
 $\lim_{t\uparrow T}\|\nabla u(t)\|_{2}\rightarrow\infty$   and call that this solution blows up in finite positive time.
Solutions of
 \eqref{1.1} admits the following conservation laws in energy space   $H^{1}:$
\begin{align*}
L^{2}-norm:\ \ \ \ M(u)(t)&\equiv \int|u(x,t)|^{2}dx=M(u_{0});\\
Energy:\ \ \ \ E(u)(t)&\equiv \frac{1}{2}\int|\nabla u(x,t)|^{2}dx-\frac{1}{p+1}\int|u(x,t)|^{p+1}dx=E(u_{0});\\
Momentum:\ \ \ \ P(u)(t)&\equiv Im\int\overline{u}(x,t)\nabla u(x,t)dx=P(u_{0}).
\end{align*}

Note that equation \eqref{1.1} is invariant under the scaling $u(x,t)\rightarrow\lambda^\frac{2}{p-1}u(\lambda x,\lambda^2t)$
which also leaves the homogeneous Sobolev norm
$\dot{H}^{s_c}$ invariant with  $s_c=\frac{N}{2}-\frac{2}{p-1}.$
Other scaling invariant quantities are  $\|\nabla u\|_{2}\|u\|_{2}^{\frac{1-s_{c}}{s_{c}}}$
and  $E(u)M(u)^{\frac{1-s_{c}}{s_{c}}}.$
It is classical from the conservation of the energy and the $L^{2}$ norm that for  $s_c<0$, the equation is subcritical
and all  $ H^{1}$ solutions are global and $ H^{1}$ bounded. The smallest power for which blow up may occur is $p=1+\frac{4}{N}$
which is referred to as the $L^{2}$ critical case  corresponding to  $s_c=0$ (see \cite{RTG} \cite{SK2}). The case  $0<s_c<1$
(equivalent to $1+\frac4N<p<1+\frac4{N-2}$) is called the
$L^{2}$ supercritical and  $H^{1}$ subcritical case. In this paper, we are concerning  with the case  $0<s_c<1.$

We say that $(q,r) $ is $\dot{H}^{s}(\mathbb{R}^{N})$-admissible ($0\leq s\leq1$) denoted by
 $(q,r)\in\Lambda_{s}$ if
$$\frac{2}{q}+\frac{N}{r}=\frac{N}{2}-s,\ \ \ \frac{2N}{N-2s}<r<\frac{2N}{N-2}.$$
This is associated to the well-known Strichartz's estimates:
for any $\varphi\in\dot{H}^{s}, f(x,t)\in L_t^{q}L_x^{r}$ and any admissible pair
$(q,r),(\gamma,\rho) \in\Lambda_{s}$, we have
\begin{align}\label{strichartz}
\|e^{it\Delta}\varphi\|_{L_t^{q}L_x^{r}}\leq C\|\varphi\|_{\dot{H}^{s}},\ \ \
\|Gf\|_{L_t^{\gamma'}L_x^{\rho'}}\leq C \|f\|_{L_t^{q}L_x^{r}},
\end{align}
where $\frac{1}{\rho'}+\frac1\rho=\frac{1}{\gamma'}+\frac1\gamma=1,$ and
$Gf(t,
x)\equiv\int\limits_{t_0}^{t}e^{i(t-s)\Delta}f(s)ds.$
We  define the following Srichartz norm
$$\|u\|_{S(\dot{H}^{s})}=\sup_{(q,r)\in\Lambda_{s}}\|u\|_{L_t^qL_x^r}
 $$
 and recall the following properties for the Cauchy problem \eqref{1.1},
 which can be found in \cite{yuan}:
\begin{proposition}\label{sd}
(Small initial data). Let $\|u_{0}\|_{\dot{H}^{s_{c}}}\leq A$, then
there exists  $\delta_{sd}=\delta_{sd}(A)>0$ such that if
 $\|e^{it\Delta}u_{0}\|_{S(\dot{H}^{s_{c}})}\leq \delta_{sd}, $ then $ u $ solving \eqref{1.1} is global and
\begin{eqnarray}
&&\|u\|_{S(\dot{H}^{s_{c}})}\leq
2\|e^{it\Delta}u_{0}\|_{S(\dot{H}^{s_{c}})},\\
&&\|D^{s_{c}}u\|_{S(L^{2})}\leq
2c\|u_{0}\|_{\dot{H}^{s_{c}}}.
\end{eqnarray}

\end{proposition}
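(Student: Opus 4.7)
The plan is to prove this by a standard contraction-mapping argument applied to the Duhamel formulation
\[
\Phi(u)(t) = e^{it\Delta}u_{0} + i\int_{0}^{t} e^{i(t-s)\Delta}\bigl(|u|^{p-1}u\bigr)(s)\,ds,
\]
set up in a complete metric space defined by the two Strichartz-type quantities that appear in the conclusion. Concretely, I would fix $A$ and a small $\delta > 0$ to be determined, and work on
\[
X = \Bigl\{ u \in C([0,T);H^{1}) : \|u\|_{S(\dot H^{s_c})} \le 2\delta,\ \|D^{s_c}u\|_{S(L^{2})} \le 2c\|u_{0}\|_{\dot H^{s_c}}\Bigr\},
\]
endowed with the distance inherited from $\|\cdot\|_{S(\dot H^{s_c})}$, for any time interval on which the local solution exists. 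The objective is to show that, for $\delta_{sd}$ sufficiently small in terms of $A$, the map $\Phi$ sends $X$ to itself and is a strict contraction, so the fixed point is a global solution satisfying the stated bounds; this automatically rules out blow-up in finite time because the global Strichartz control gives the standard continuation criterion.

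The estimates on $\Phi$ proceed in two stages. For the $S(\dot H^{s_c})$ bound I would use the homogeneous Strichartz estimate in \eqref{strichartz} on the linear part, and for the Duhamel term I would apply the inhomogeneous version with suitably chosen dual admissible pairs in $\Lambda_{s_c}$ together with Hölder and Sobolev embedding to obtain a nonlinear bound of the schematic form
\[
\|\Phi(u) - e^{it\Delta}u_{0}\|_{S(\dot H^{s_c})} \le C\,\|u\|_{S(\dot H^{s_c})}^{p-1}\,\|D^{s_c}u\|_{S(L^{2})}.
\]
For the $S(L^{2})$ bound on $D^{s_c}\Phi(u)$ I would use the fractional Leibniz/chain rule for $D^{s_c}$ applied to $|u|^{p-1}u$, placing $p-1$ factors in the $S(\dot H^{s_c})$ spaces (so that the $\dot H^{s_c}$-admissible Strichartz integrability is consumed by the subcritical powers) and one factor of $D^{s_c}u$ in $S(L^{2})$, to obtain
\[
\|D^{s_c}\Phi(u)\|_{S(L^{2})} \le c\|u_{0}\|_{\dot H^{s_c}} + C\,\|u\|_{S(\dot H^{s_c})}^{p-1}\,\|D^{s_c}u\|_{S(L^{2})}.
\]
Plugging the $X$-bounds into the right-hand sides gives $2\delta$ and $2c\|u_{0}\|_{\dot H^{s_c}}$ respectively, provided $C\,(2\delta)^{p-1}\le 1/2$, i.e.\ $\delta_{sd}=\delta_{sd}(A)$ is chosen so that $C\delta_{sd}^{p-1}$ is sufficiently small. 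A parallel computation on the difference $\Phi(u)-\Phi(v)$ yields the contraction in the $S(\dot H^{s_c})$ metric, again under the same smallness condition.

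The technical heart of the argument, and the place I would expect the main difficulty, is the nonlinear estimate for $D^{s_c}(|u|^{p-1}u)$: since $p$ is not in general an odd integer, the nonlinearity has only limited smoothness and one must invoke the fractional chain rule of Christ--Weinstein, which forces a careful choice of admissible Strichartz pairs in $\Lambda_{s_c}$ so that the scaling exponents balance and each factor lands in a norm that is actually controlled by either $\|u\|_{S(\dot H^{s_c})}$ or $\|D^{s_c}u\|_{S(L^{2})}$. Once the admissible pairs are pinned down (which is routine given the constraint $0<s_c<1$ coming from $1+\tfrac{4}{N}<p<1+\tfrac{4}{N-2}$) the remainder is a purely algebraic verification of Hölder exponents, and the fixed-point conclusion, together with the blow-up alternative, yields both globality of $u$ and the two quantitative bounds stated in the proposition.
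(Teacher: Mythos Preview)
Your contraction-mapping outline is the standard route to this small-data result and is essentially correct; the paper itself does not give a proof but simply quotes the proposition from \cite{yuan}, so there is no ``paper's own proof'' to compare against beyond noting that your argument is precisely the kind of argument one expects to find in that reference (or in Holmer--Roudenko for the $N=3$, $p=3$ case).

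Two small remarks on your write-up. First, it is cleaner to run the fixed point directly on the whole line $\mathbb{R}$ rather than on $[0,T)$ and then invoke a continuation criterion: once $\Phi$ contracts on your ball $X$ with time interval $\mathbb{R}$, globality is immediate. Second, in the first schematic inequality you may want to double-check the exact distribution of factors: depending on which $\dot H^{s_c}$-admissible and $L^2$-admissible pairs you pick (and whether you use the Kato-type inhomogeneous estimate with an $\dot H^{-s_c}$-admissible dual pair), the Duhamel bound for $\|\Phi(u)-e^{it\Delta}u_0\|_{S(\dot H^{s_c})}$ sometimes comes out as $C\|u\|_{S(\dot H^{s_c})}^{p}$ rather than $C\|u\|_{S(\dot H^{s_c})}^{p-1}\|D^{s_c}u\|_{S(L^2)}$. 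Either form closes the loop under the same smallness condition $C(2\delta)^{p-1}\le 1/2$, so this is a cosmetic choice, but be explicit about the pairs when you write it out in full.
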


\begin{remark}\label{ss}
Note that by
 Strichartz's estimates, the hypotheses are satisfied if
 $\|u_{0}\|_{\dot{H}^{s_{c}}}\leq C\delta_{sd}. $
 Furthermore, by the result obtained by \cite{yuan}, the uniform bound
 of $\dot{H}^{s_{c}}$-norm of the solution $u$ to \eqref{1.1} implies
 $u(t)$ scatters as $t\rightarrow\pm\infty$.
\end{remark}

\begin{proposition}\label{wave}
(Existence of wave operators). Suppose that $\psi^{+}\in H^1$ and
\begin{equation}\label{3.6}
\frac{1}{2^{}}||\nabla\psi^{+}||_{2}^{2}M(\psi^{+})^{\frac{1-s_{c}}{s_{c}}}<
E(Q)^{}M(Q)^{\frac{1-s_{c}}{s_{c}}}.
\end{equation}
Then there exists $v_{0}\in H^1$ such that v solves \eqref{1.1} with
initial data $v_0$ globally in $H^{1}$ with
$$\|\nabla
v(t)\|_{2}^{}\|v_{0}\|_{2}^{\frac{1-s_{c}}{s_{c}}}<\|\nabla
Q\|_{2}^{}\|Q\|_{2}^{\frac{1-s_{c}}{s_{c}}},M(v)=\|\psi^{+}\|_{2}^{2},E[v]=\frac{1}{2}\|\nabla\psi^{+}\|_{2}^{2},$$
and
$$\lim_{t\rightarrow+\infty}\|v(t)-e^{it\Delta}\psi^{+}\|_{H^1}=0.$$
Moreover, if
$\|e^{it\Delta}\psi^{+}\|_{S(\dot{H}^{s_{c}})}\leq \delta_{sd}$, then
$$\|v_{0}\|_{\dot{H}^{s_{c}}}\leq2\|\psi^{+}\|_{\dot{H}^{s_{c}}}\
\mathrm{and} \
 \|v\|_{S(\dot{H}^{s_{c}})}\leq2\|e^{it\Delta}\psi^{+}\|_{S(\dot{H}^{s_{c}})}.$$
$$\|D^{s}v\|_{S(L^{2})}\leq c\|\psi^{+}\|_{\dot{H}^s}, 0\leq s\leq1.$$
\end{proposition}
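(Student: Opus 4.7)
The plan is to construct $v$ backwards in time from $t = +\infty$, that is, to solve the final-value integral equation
\begin{equation*}
v(t) = e^{it\Delta}\psi^+ - i\int_t^{\infty} e^{i(t-s)\Delta}|v|^{p-1}v(s)\,ds
\end{equation*}
on $[T,\infty)$ for a suitable large $T>0$, and then to extend the resulting trajectory backwards using the sub-threshold global theory of \cite{G1,yuan}. I would fix $T$ large enough that $\|e^{it\Delta}\psi^+\|_{S(\dot{H}^{s_{c}})([T,\infty))}\leq \delta_{sd}/2$, which is possible since $\psi^+\in H^1\hookrightarrow \dot{H}^{s_{c}}$ has finite Strichartz norm on $\mathbb{R}$ with a vanishing tail. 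A standard contraction argument in a Strichartz ball, run simultaneously at the $\dot{H}^{s_{c}}$ and $D^1 L^2$ (hence also $D^{s_{c}}L^2$) levels via the nonlinear estimates underlying Proposition \ref{sd}, produces $v\in C([T,\infty);H^1)$ whose Strichartz norms are comparable to those of $e^{it\Delta}\psi^+$; tailing off the Duhamel integral then yields $\|v(t)-e^{it\Delta}\psi^+\|_{H^1}\to 0$ as $t\to\infty$.

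Once $v$ is constructed on $[T,\infty)$, I would identify the conserved quantities with their linear values. Mass conservation together with the $H^1$-convergence immediately gives $M(v) = \|\psi^+\|_2^2$. For the energy, conservation plus $\|\nabla v(t)\|_2 \to \|\nabla\psi^+\|_2$ reduces to showing $\lim_{t\to\infty}\|v(t)\|_{p+1}^{p+1} = 0$: by density of Schwartz data and the dispersive decay $\|e^{it\Delta}\varphi\|_{p+1}\leq C|t|^{-N(1/2-1/(p+1))}\|\varphi\|_{(p+1)'}$ (applicable because $2 < p+1 < 2N/(N-2)$) one gets $\|e^{it\Delta}\psi^+\|_{p+1}\to 0$, and then $v(t)-e^{it\Delta}\psi^+\to 0$ in $H^1\hookrightarrow L^{p+1}$ finishes the argument. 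Hence $E(v) = \tfrac12\|\nabla\psi^+\|_2^2$, and the hypothesis \eqref{3.6} reads precisely as $E(v)M(v)^{(1-s_{c})/s_{c}} < E(Q)M(Q)^{(1-s_{c})/s_{c}}$.

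Next, via the Pohozaev identity $2E(Q) = \tfrac{2s_c}{N}\|\nabla Q\|_2^2$ with $\tfrac{2s_c}{N}<1$, the above mass-energy inequality gives $\|\nabla\psi^+\|_2\|\psi^+\|_2^{(1-s_{c})/s_{c}} < \|\nabla Q\|_2\|Q\|_2^{(1-s_{c})/s_{c}}$, so for $t$ large $\|\nabla v(t)\|_2\|v(t)\|_2^{(1-s_{c})/s_{c}}$ lies strictly below the sharp threshold. Applying the sub-threshold global well-posedness of \cite{G1,yuan} to the $H^1$ datum $v(T)$, I extend $v$ globally in $H^1$ while remaining in this trapping regime, and set $v_0 := v(0)$. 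For the final strengthening, when $\|e^{it\Delta}\psi^+\|_{S(\dot{H}^{s_{c}})}\leq \delta_{sd}$ holds on all of $\mathbb{R}$, the same contraction can be run directly on the full real line, giving $v\in S(\dot{H}^{s_{c}})(\mathbb{R})$ together with $\|v_0\|_{\dot{H}^{s_{c}}}\leq 2\|\psi^+\|_{\dot{H}^{s_{c}}}$, $\|v\|_{S(\dot{H}^{s_{c}})}\leq 2\|e^{it\Delta}\psi^+\|_{S(\dot{H}^{s_{c}})}$, and the $D^s$-level bounds, as stated.

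The main obstacle I expect is closing the fixed point simultaneously at the $L^2$ and $\dot{H}^{s_{c}}$ regularity levels: since $p$ is $L^2$-supercritical and $\dot{H}^1$-subcritical, the fractional chain rule and paraproduct estimates must be handled with care to bound $D^s(|v|^{p-1}v)$ for $s\in\{s_c,1\}$ in dual Strichartz norms, using both the smallness of $\|e^{it\Delta}\psi^+\|_{S(\dot{H}^{s_{c}})([T,\infty))}$ and the $H^1$-regularity of the data to control the Duhamel contribution. A secondary delicate point is verifying $\|v(t)\|_{p+1}\to 0$ as $t\to\infty$, since this is what pins the energy exactly to $\tfrac12\|\nabla\psi^+\|_2^2$ and thereby translates the linear hypothesis \eqref{3.6} into a genuine mass-energy sub-threshold condition on the nonlinear trajectory.
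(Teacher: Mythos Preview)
The paper does not prove Proposition~\ref{wave}: it is stated together with Propositions~\ref{sd} and~\ref{perturb} as a result ``which can be found in \cite{yuan}'', and no argument is supplied. Your outline is the standard construction of wave operators (backwards-in-time fixed point on $[T,\infty)$, identification of mass and energy with their free limits, then global extension via the sub-threshold theory of \cite{G1,yuan}), and it is correct in spirit. Since there is no proof in the paper to compare against, there is nothing further to contrast.

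One small correction: your Pohozaev identity is slightly misstated. From \eqref{2.2} one has $E(Q)=\frac{N(p-1)-4}{2N(p-1)}\|\nabla Q\|_2^2$, i.e. $2E(Q)=\frac{N(p-1)-4}{N(p-1)}\|\nabla Q\|_2^2$, not $\frac{2s_c}{N}\|\nabla Q\|_2^2$. This does not affect your argument, since the relevant coefficient is still strictly less than $1$ and the conclusion $\|\nabla\psi^+\|_2\|\psi^+\|_2^{(1-s_c)/s_c}<\|\nabla Q\|_2\|Q\|_2^{(1-s_c)/s_c}$ follows in the same way.
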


\begin{proposition}\label{perturb}
(long time perturbation theory). $\forall$ $A\geq 1$, there exists
$\epsilon_{0}=\epsilon_{0}(A)$, $c=c(A)\gg1$ such that if
$u=u(x,t)\in H^{1}$ satisfy
$$iu_{t}+\Delta
u+|u|^{p-1}u=0.$$
 $\tilde{u}=\tilde{u}(x,t)\in H^{1}$ ,define
$$e =i\tilde{u}_{t}+\Delta
\tilde{u}+|\tilde{u}|^{p-1}\tilde{u}$$
with $\|\tilde{u}\|_{S(\dot{H}^{s_{c}})}\leq
A$. If
\begin{equation*}
\|e\|_{S(\dot{H}^{s_{c}})}\leq \epsilon_{0},
\end{equation*}
$$\|e^{i(t-t_{0})\Delta}(u(t_{0})-\tilde{u}(t_{0}))\|_{S(\dot{H}^{s_{c}})}\leq
\epsilon_{0},$$ then
$$\|u\|_{S(\dot{H}^{s_{c}})}\leq c=c(A)<\infty. $$
\end{proposition}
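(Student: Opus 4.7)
The plan is to adapt the standard long-time perturbation argument to the $\dot{H}^{s_c}$-critical setting. Let $w = u - \tilde{u}$, so that $w$ satisfies
\begin{equation*}
i w_t + \Delta w + \bigl(|\tilde{u}+w|^{p-1}(\tilde{u}+w) - |\tilde{u}|^{p-1}\tilde{u}\bigr) = -e,
\end{equation*}
with initial data $w(t_0) = u(t_0) - \tilde{u}(t_0)$. Writing this equation in Duhamel form and taking the $S(\dot{H}^{s_c})$ norm, the Strichartz estimates from \eqref{strichartz} reduce the problem to controlling the nonlinear difference $N(\tilde{u}+w) - N(\tilde{u})$, where $N(v) := |v|^{p-1}v$, in the appropriate dual Strichartz space by terms of the schematic form $(\|\tilde{u}\|^{p-1} + \|w\|^{p-1})\|w\|$, using the fractional Leibniz rule.

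First I would exploit the hypothesis $\|\tilde{u}\|_{S(\dot{H}^{s_c})} \leq A$ to partition the time axis into a finite number $J = J(A)$ of subintervals $I_j = [t_j, t_{j+1}]$ on each of which $\|\tilde{u}\|_{S(\dot{H}^{s_c}; I_j)} \leq \delta$, where $\delta > 0$ is a small universal parameter (depending only on the implicit constants in the Strichartz and nonlinear estimates) chosen so that a bootstrap closes on each interval. On $I_j$, Duhamel's formula for $w$ together with the Strichartz estimate yields, schematically,
\begin{equation*}
\|w\|_{S(\dot{H}^{s_c}; I_j)} \leq C\|e^{i(t-t_j)\Delta} w(t_j)\|_{S(\dot{H}^{s_c}; I_j)} + C\bigl(\delta^{p-1} + \|w\|_{S(\dot{H}^{s_c}; I_j)}^{p-1}\bigr)\|w\|_{S(\dot{H}^{s_c}; I_j)} + C\epsilon_0,
\end{equation*}
so that for $\delta$ small, a continuity argument gives $\|w\|_{S(\dot{H}^{s_c}; I_j)} \leq 2C\|e^{i(t-t_j)\Delta} w(t_j)\|_{S(\dot{H}^{s_c}; I_j)} + 2C\epsilon_0$.

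Next I would propagate the bound on the free evolution of $w$ across successive subintervals. From
\begin{equation*}
e^{i(t - t_{j+1})\Delta} w(t_{j+1}) = e^{i(t-t_j)\Delta} w(t_j) - i\int_{t_j}^{t_{j+1}} e^{i(t-s)\Delta}\bigl[N(\tilde{u}+w) - N(\tilde{u}) - e\bigr]\,ds,
\end{equation*}
Strichartz bounds the new ``initial'' free-evolution term in $S(\dot{H}^{s_c}; I_{j+1})$ by the old one plus a nonlinear contribution of order $\delta^{p-1}\|w\|_{S(\dot{H}^{s_c}; I_j)} + \|w\|_{S(\dot{H}^{s_c}; I_j)}^p + \epsilon_0$. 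Iterating over the $J(A)$ subintervals, each step enlarges the free-evolution norm by a fixed multiplicative factor, so after $J$ steps the full $S(\dot{H}^{s_c})$ norm of $w$ is bounded by some $C(A)\epsilon_0$, with $C(A)$ growing at worst geometrically in $J(A)$. Choosing $\epsilon_0 = \epsilon_0(A)$ small enough that $C(A)\epsilon_0$ stays below the bootstrap threshold at every stage and applying the triangle inequality yields $\|u\|_{S(\dot{H}^{s_c})} \leq A + C(A)\epsilon_0 =: c(A)$, as desired.

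The main technical obstacle I anticipate is the nonlinear difference estimate for $N(\tilde{u}+w) - N(\tilde{u})$ in the relevant dual Strichartz space. Since the exponent $p$ need not be an integer, one must invoke the fractional chain rule of Christ--Weinstein together with H\"older's inequality in mixed space-time Lebesgue norms, and carefully select admissible Strichartz pairs $(q,r) \in \Lambda_{s_c}$ so that no derivative loss occurs and the parameters $\delta$ and $\epsilon_0$ end up depending only on $A$ (and not on the length of the time interval). The assumption $0 < s_c < 1$, equivalently $1 + 4/N < p < 1 + 4/(N-2)$, is precisely what guarantees enough Sobolev room for these product estimates to close.
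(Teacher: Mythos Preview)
The paper does not prove this proposition; it is quoted from \cite{yuan} (see the sentence immediately preceding Proposition~\ref{sd}, which introduces Propositions~\ref{sd}--\ref{perturb} as results ``which can be found in \cite{yuan}''). So there is no proof in the paper to compare against.

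Your outline is the standard and correct long-time perturbation argument: partition the time axis into $J(A)$ subintervals on which $\|\tilde u\|_{S(\dot H^{s_c})}$ is small, run a bootstrap on each piece, and propagate the free-evolution seed from one interval to the next via Duhamel, picking up at most a fixed multiplicative factor per step. This is exactly how the result is proved in the references the paper relies on.

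One small overcomplication: the $S(\dot H^{s_c})$ norm in this paper is defined as a supremum of pure $L^q_tL^r_x$ norms over $\dot H^{s_c}$-admissible pairs, with no derivatives on $u$ itself. Consequently the nonlinear difference estimate you need follows directly from the pointwise bound
\[
\bigl||a+b|^{p-1}(a+b)-|a|^{p-1}a\bigr|\lesssim (|a|^{p-1}+|b|^{p-1})|b|
\]
together with H\"older's inequality in suitably chosen mixed Lebesgue exponents (using the scaling relation $\frac2q+\frac Nr=\frac N2-s_c$ to match the dual pair). The fractional chain/Leibniz rule of Christ--Weinstein is not required at this level; it would only enter if you were tracking $\|D^{s_c}u\|_{S(L^2)}$ or $H^1$-level norms, which the conclusion of the proposition does not ask for.
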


For the 3D cubic nonlinear Schr\"{o}dinger equation
with  $s_c=\frac{1}{2}$ and $p=3,$ there have been several results on  either scattering  or blow-up solutions.
In \cite{radial}, \cite{nonradial} and \cite{holmer10}, Roudenko and Holmer have shown that
$M(Q)E(Q)$ plays an important role in the dynamical behavior of solutions of equation \eqref{1.1} with $p=3$ and $N=3$.
The authors in \cite{yuan} and \cite{G1} extended their results to the general
$L^2$-supercritical and $\dot{H}^1$-subcritical case and showed that
$M(Q)^{\frac{1-s_{c}}{s_{c}}}E(Q)$ is an threshold for the dynamics in the following sense:
Let $u$ be a solution of \eqref{1.1} 
satisfying
$M(u)^{\frac{1-s_{c}}{s_{c}}}E(u)<M(Q)^{\frac{1-s_{c}}{s_{c}}}E(Q)$.
Then if  $\|\nabla u_{0}\|_{2}\|u_{0}\|_{2}^{\frac{1-s_{c}}{s_{c}}}<\|\nabla Q\|_{2}\|Q\|_{2}^{\frac{1-s_{c}}{s_{c}}}$,
we have $T_+=T_-=\infty$ and $\|u\|_{S(\dot{H}^{s_{c}})}<\infty.$
On the other hand, if
$\|\nabla u_{0}\|_{2}\|u_{0}\|_{2}^{\frac{1-s_{c}}{s_{c}}}>\|\nabla Q\|_{2}\|Q\|_{2}^{\frac{1-s_{c}}{s_{c}}}$,
then either  $u(t)$ blows up in finite forward time,  or $u(t)$ is forward global and there exists a
time sequence $t_{n}\rightarrow\infty$ such that  $\|\nabla u(t_{n})\|_{2}\rightarrow\infty.$
A similar statement holds for negative time.
Our goal in this paper is to give a classification of solutions of the solution of \eqref{1.1}
with the critical level:
\begin{align}\label{1.3}
M(u)^{\frac{1-s_{c}}{s_{c}}}E(u)=M(Q)^{\frac{1-s_{c}}{s_{c}}}E(Q)
\end{align}
 extending the very recent results obtained in \cite{holmer3} for the particular case
with $p=3$ and $N=3$.
 The idea in this paper follows from Kenig-Merle \cite{M2} for the energy-critical NLS.

In this paper we obtain the following results:
\begin{theorem}\label{th2}
There exist two radial solutions $Q^+$ and $Q^-$ of \eqref{1.1}
with initial data $Q^\pm_0\in\cap_{s\in\mathbb R}H^s(\mathbb R^N)$ and satisfy\\
(a) $M(Q^+)=M(Q^-)=M(Q)$, $E(Q^+)=E(Q^-)=E(Q)$, $[0,+\infty)$ is in the domain of
the definition of $Q^\pm$ and there exists $e_0>0$ such that
$$\|Q^\pm(t)-e^{i(1-s_c)t}Q\|_{H^1}\leq Ce^{-e_0t},\ \ \ \forall\ \ t\geq0;$$
(b) $\|\nabla Q^-_0\|_2<\|\nabla Q\|_2$, $Q^-$ is globally defined and scatters for negative time;\\
(c) $\|\nabla Q^+_0\|_2>\|\nabla Q\|_2$, and the negative time of existence of $Q^+$ is finite.
\end{theorem}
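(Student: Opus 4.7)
The plan is to construct $Q^\pm$ as controlled perturbations of the standing wave $e^{i(1-s_c)t}Q$ along the unstable direction of the linearized flow, adapting the Duyckaerts--Merle type argument used in \cite{holmer3} to the general exponent. The preparatory step is the spectral analysis of the linearization. Writing a solution as $u(t,x) = e^{i(1-s_c)t}(Q(x) + h(t,x))$ with $h = h_1 + i h_2$, one derives a real system $\partial_t h = \mathcal{L} h + N(h)$ governed by
\begin{equation*}
\mathcal{L} = \begin{pmatrix} 0 & L_- \\ -L_+ & 0 \end{pmatrix}, \qquad L_+ = -\Delta + (1-s_c) - p Q^{p-1}, \quad L_- = -\Delta + (1-s_c) - Q^{p-1}.
\end{equation*}
Using non-degeneracy of the radial ground state and a Weyl-sequence argument, one shows that $\mathcal{L}$ admits exactly one pair of real simple eigenvalues $\pm e_0$ off the continuous spectrum, with smooth exponentially decaying eigenfunctions $\mathcal{Y}^\pm$, while its generalized nullspace is spanned by the infinitesimal symmetries (phase, scaling, translation).

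With this in hand, I would build approximate solutions of the form $V_K^a(t,x) = e^{i(1-s_c)t}\bigl(Q(x) + \sum_{k=1}^K a^k e^{-k e_0 t} Z_k(x)\bigr)$, with $Z_1 = \mathcal{Y}^+$ and $Z_k$ for $k \geq 2$ defined by inverting $\mathcal{L} + k e_0$ (which is invertible since $-k e_0$ lies in the resolvent set of $\mathcal{L}$) against the appropriate source coming from the nonlinearity. An induction in $k$ shows that the residual of $V_K^a$ in \eqref{1.1} is bounded by $C e^{-(K+1) e_0 T}$ on $[T,\infty)$ in the $S(\dot H^{s_c})$-norm. Choosing $K$ large enough that this residual falls below the $\epsilon_0$ of Proposition~\ref{perturb}, and running a contraction on $[T,\infty)$ in the weighted norm $e^{(K+1/2) e_0 t}\|\cdot\|_{H^1}$, one upgrades $V_K^a$ to an exact radial solution $\tilde Q^a$ of \eqref{1.1} with $\|\tilde Q^a(t) - V_K^a(t)\|_{H^1} \leq C e^{-(K+1/2) e_0 t}$. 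Hence $\|\tilde Q^a(t) - e^{i(1-s_c)t}Q\|_{H^1} \leq C e^{-e_0 t}$ for all $t \geq 0$, and conservation together with the limits $M(V_K^a(t)) \to M(Q)$, $E(V_K^a(t)) \to E(Q)$ as $t \to +\infty$ forces $M(\tilde Q^a) = M(Q)$ and $E(\tilde Q^a) = E(Q)$. Schwartz regularity of $Q$ and $\mathcal{Y}^+$ propagates to give $\tilde Q^a \in \cap_{s\in\mathbb R} H^s$, so (a) is established.

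To distinguish $Q^+$ and $Q^-$, I would use the expansion $\|\nabla \tilde Q^a(0)\|_2^2 - \|\nabla Q\|_2^2 = 2 a\,\mathrm{Re}\langle \nabla Q, \nabla \mathcal{Y}^+ \rangle + O(a^2)$, which is non-trivial by the spectral picture; for small $|a| \neq 0$ one sign yields $\|\nabla \tilde Q^a_0\|_2 < \|\nabla Q\|_2$ (label it $Q^-$) and the opposite sign gives $\|\nabla \tilde Q^a_0\|_2 > \|\nabla Q\|_2$ (label it $Q^+$). For (c), since $Q$ and $\mathcal{Y}^+$ are Schwartz the data $Q^+_0$ lies in $\Sigma = H^1 \cap \{xu\in L^2\}$, and the virial identity under the critical-threshold hypothesis with strict super-threshold gradient norm gives strict concavity of $\int |x|^2 |Q^+|^2\, dx$, forcing finite negative time of existence. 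For (b), I would run a Kenig--Merle-type rigidity argument: failure of $Q^-$ to scatter backward would produce, via profile decomposition, a minimal non-scattering solution at the critical threshold with strict sub-threshold gradient norm, and the uniqueness part of the modulation/linearization analysis would force it to coincide (up to symmetries) with $e^{i(1-s_c)t}Q$, contradicting the strict inequality. The two main obstacles are expected to be (i) the spectral analysis of $\mathcal{L}$ in the full range $0 < s_c < 1$, where $Q$ is not explicit and one must exploit the variational characterization and non-degeneracy, and (ii) the profile-decomposition/rigidity step in (b), which must be set up in the correct critical space $\dot H^{s_c}$ and adapted from the cubic 3D treatment of \cite{holmer3}.
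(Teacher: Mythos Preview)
Your construction of $Q^\pm$ via approximate profiles and a weighted contraction on $[T,\infty)$ is exactly the paper's route (Propositions~\ref{ZA} and~\ref{UA}), and part (a) follows as you say. The non-vanishing of the leading coefficient needed to split the signs is not a soft spectral fact, though: the paper argues (Remark~\ref{r2.1}) that if $\int(\Delta Q-(1-s_c)Q)\mathcal Y_1=0$ then $Q\in G'_\perp$, contradicting the coercivity $\Phi|_{G'_\perp}>0$ of Proposition~\ref{coercivity} since $\Phi(Q)<0$. (Also: scaling lies in the \emph{generalized} nullspace of $\mathcal L$, not the nullspace.)

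For (c), strict concavity of the variance alone does not force blow-up; the paper first uses forward global existence to prove $y'(t)>0$ on the whole interval (Lemma~\ref{l53}), then applies the same statement to $\bar u(x,-t)$ to obtain the simultaneous inequality $y'<0$, a contradiction. For (b), your uniqueness-based endgame does not close: Proposition~\ref{p71} applied to the time-reversed $Q^-$ would only identify it with some $U^{A'}$, not force $A'=0$, so no contradiction with the strict sub-threshold gradient emerges. The paper's argument (Subsection~6.4) is more direct and avoids uniqueness entirely: assuming $Q^-$ fails to scatter backward, the compactness and localized-virial machinery of Section~6 applied in both time directions yields a bounded $x(t)$ on $\mathbb R$, $\delta(t)\to 0$ as $t\to\pm\infty$, and the estimate $\int_\sigma^\tau\delta\leq C(\delta(\sigma)+\delta(\tau))$ of Lemma~\ref{l67}; sending $\sigma\to-\infty$, $\tau\to+\infty$ gives $\int_{\mathbb R}\delta=0$, hence $\delta\equiv 0$, contradicting $\|\nabla Q^-_0\|_2<\|\nabla Q\|_2$.
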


\begin{theorem}\label{th3}
Let $u$ be a solution of \eqref{1.1} satisfying \eqref{1.3}.\\
(a) If $\|\nabla u_0\|_2\|u_0\|^{\frac{1-s_{c}}{s_{c}}}_2<\|\nabla Q\|_2\|Q\|^{\frac{1-s_{c}}{s_{c}}}_2$,
then either $u$ scatters or $u=Q^-$ up to the symmetries;\\
(b) If $\|\nabla u_0\|_2\|u_0\|^{\frac{1-s_{c}}{s_{c}}}_2=\|\nabla Q\|_2\|Q\|^{\frac{1-s_{c}}{s_{c}}}_2$,
then $u=e^{i(1-s_c)t}Q$ up to the symmetries;\\
(c) If $\|\nabla u_0\|_2\|u_0\|^{\frac{1-s_{c}}{s_{c}}}_2>\|\nabla Q\|_2\|Q\|^{\frac{1-s_{c}}{s_{c}}}_2$,
and $u_0$ is radial or of finite variance, then either the interval of existence of $u$ is of finite or $u=Q^+$
up to the symmetries.
\end{theorem}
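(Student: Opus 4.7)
The plan is to combine a concentration--compactness reduction in the spirit of Kenig--Merle with a modulation and spectral analysis around the standing wave $e^{i(1-s_c)t}Q$, following the blueprint of \cite{holmer3}. Part (b) is the cleanest: under the hypothesis $\|\nabla u_0\|_2\|u_0\|_2^{(1-s_c)/s_c}=\|\nabla Q\|_2\|Q\|_2^{(1-s_c)/s_c}$, the sharp Gagliardo--Nirenberg inequality combined with \eqref{1.3} is saturated by $u_0$, which forces $u_0$ to be of the form $e^{i\theta_0}\mu_0^{(N-2s_c)/2}Q(\mu_0(\cdot-x_0))$, and uniqueness for the Cauchy problem then gives $u(t)=e^{i(1-s_c)t}Q$ up to the modulation symmetries.

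For parts (a) and (c) I would argue by contradiction. If (a) fails, a Kenig--Merle profile decomposition combined with the sub-threshold scattering theory of \cite{G1,yuan} yields a minimal non-scattering critical element $u_c$ at the level \eqref{1.3} with $\|\nabla u_{c,0}\|_2\|u_{c,0}\|_2^{(1-s_c)/s_c}<\|\nabla Q\|_2\|Q\|_2^{(1-s_c)/s_c}$, whose trajectory is precompact in $H^1$ modulo the modulation group. Similarly for (c), one extracts a minimal critical element $u_c$ which is non-blow-up and on the super-threshold side of the kinetic energy; the radial/finite variance hypothesis enters here through a localized virial identity, which controls the translation parameter $x(t)$ and rules out a finite-time blow-up of $u_c$, so that the analysis is again reduced to a compact trajectory.

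With $u_c$ in hand I would introduce modulation coordinates
\begin{equation*}
u_c(t,x)=e^{i\theta(t)}\mu(t)^{(N-2s_c)/2}[Q+h(t,\cdot)](\mu(t)(x-x(t))),
\end{equation*}
imposing orthogonality conditions that place $h(t)$ in the complement of the generalized kernel of the linearized operator $\mathcal{L}$ around $Q$. A spectral analysis of $\mathcal{L}$ (for which the $p=3$, $N=3$ computation of \cite{holmer3} must be non-trivially extended to general $0<s_c<1$) should produce a simple negative eigenvalue $-e_0$ and a simple positive eigenvalue $+e_0$ with eigenfunctions $\mathcal{Y}_\pm$, a finite-dimensional kernel generated by the symmetries, and positive essential spectrum. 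Setting $a_\pm(t):=\langle h(t),\mathcal{Y}_\mp\rangle$, the equation and the orthogonality should yield
\begin{equation*}
|\theta'-(1-s_c)\mu^2|+|\mu'/\mu|+|x'|\leq C\|h(t)\|_{H^1}^2,\qquad a_\pm'(t)=\pm e_0\, a_\pm(t)+O(\|h(t)\|_{H^1}^2),
\end{equation*}
while the conservation laws together with \eqref{1.3} deliver a coercivity estimate $\|h(t)\|_{H^1}^2\leq C(a_+(t)^2+a_-(t)^2)$.

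The ODE system then drives the classification via a clean dichotomy. Either $a_+\equiv a_-\equiv 0$ along the trajectory, in which case $h\equiv 0$ and $u_c$ sits in the $Q$ orbit, contradicting the strict inequalities of (a) or (c); or one of $a_\pm$ grows exponentially in one time direction, which is excluded by the compactness of the orbit of $u_c$. Consequently both $a_\pm$ decay exponentially in the appropriate time direction, forcing convergence of $u_c$ to the $Q$ orbit at rate $e_0$. This matches the asymptotics of $Q^\pm$ from Theorem \ref{th2}, and a final uniqueness argument for the exponentially decaying branch identifies $u_c$ with $Q^-$ in case (a) and with $Q^+$ in case (c) up to the symmetries. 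The hardest step is this last rigidity statement: one must show that the stable/unstable manifold through the $Q$ orbit is one-dimensional and parametrized by $Q^\pm$, which requires a contraction argument in an exponentially weighted Strichartz space robust to the $L^2$-supercritical nonlinearity of general exponent $p$, replacing the purely cubic estimates of \cite{holmer3}.
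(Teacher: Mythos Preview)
Your outline for part (b) is correct and matches the paper. The gaps lie in parts (a) and (c).

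\textbf{The central gap: your ODE dichotomy does not close.} You claim that compactness of the orbit excludes exponential growth of $a_\pm$, and that ``consequently both $a_\pm$ decay exponentially.'' Compactness only gives boundedness of $a_\pm$; it does not by itself force decay. With $a_\pm'=\pm e_0 a_\pm + O(\|h\|_{H^1}^2)$ and $\|h\|$ merely bounded, the error term can stay of order one and the system says nothing. Moreover, the modulation decomposition you write down is only defined when $\delta(t)=\big|\|\nabla u(t)\|_2^2-\|\nabla Q\|_2^2\big|<\delta_0$; a priori there are times at which $u$ is \emph{not} close to the $Q$ orbit, and on those times $h$, $a_\pm$ are undefined. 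The paper closes this by a completely different mechanism: a localized virial identity. In case (a) one first shows $\frac{1}{T}\int_0^T\delta(t)\,dt\to 0$ (Lemma~\ref{l65}), extracts a subsequence with $\delta(t_n)\to 0$, and then combines the virial inequality $\int_\sigma^\tau \delta\leq C(1+\sup|x(t)|)(\delta(\sigma)+\delta(\tau))$ (Lemma~\ref{l67}) with control on the translation (Lemma~\ref{l68}) and Gronwall to get $\int_t^\infty\delta(s)\,ds\leq Ce^{-ct}$. Only \emph{after} this exponential convergence is established does the spectral decomposition enter, and then solely for the uniqueness step (Proposition~\ref{p71}, Lemma~\ref{l72}). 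Your proposal inverts this order and omits the virial step that does the real work.

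\textbf{Case (c) is also structurally misdescribed.} There is no minimal critical element and no profile decomposition in the supercritical-gradient case. One assumes directly that the given $u$ is global for positive time and applies the (localized) virial identity to get $y''_R(t)\leq -c\,\delta(t)$, from which $y'_R>0$, $(y'_R)^2\leq C y_R (y''_R)^2$, and a Gronwall argument give $\int_t^\infty\delta\leq Ce^{-ct}$ (Lemma~\ref{l53} and the radial reduction in \S5.2). The radial/finite-variance hypothesis is used precisely here, not to ``rule out blow-up of $u_c$.''

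Two smaller points. First, after rescaling to $M(u)=M(Q)$, $E(u)=E(Q)$ there is no scaling parameter $\mu(t)$; the paper's modulation uses only $\theta(t)$, $X(t)$, and an auxiliary $\alpha(t)$. Second, the modulation bounds are linear, $|\alpha'|+|X'|+|\theta'|=O(\delta)$ (Lemma~\ref{l43}), not quadratic as you wrote; this matters because the Gronwall argument in Lemma~\ref{l44} uses the linear bound. Your identification of the hardest step---uniqueness of the exponentially decaying branch via a contraction in exponentially weighted Strichartz norms---is accurate, and the paper carries it out in \S7 with the self-improving Lemma~\ref{l72}.
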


\begin{remark}\label{p}
Equation \eqref{1.1} admits the Galilean invariance:
If $u(x,t)$ is a solution of \eqref{1.1}, then for any $\xi_0\in\mathbb R^N$,
$w(x,t)\equiv u(x-\xi_0t,t)e^{i\frac{\xi_0}2\cdot(x-\frac{\xi_0}2t)}$ also satisfies the equation \eqref{1.1}.
Recall from the Appendix of \cite{G1},
taking the Galilean transform with $\xi_0=-P(u)/M(u)$  into account, we get a solution with zero momentum which is
the minimal energy solution $v$ among all Galilean transformations of the solution $u$ of \eqref{1.1}.
Precisely, $M(v)=M(u), E(v)=E(u)-\frac12\frac{P(u)^2}{M(u)}$ and $\|v_0\|^2_2=\|u_0\|^2_2-\frac12\frac{P(u_0)^2}{M(u_0)}$.
Applying  Theorem \ref{th3} and the results obtained in \cite{yuan} and \cite{G1} to $v$, we indeed obtain that
\begin{theorem}\label{th4}
Let $u$ be a solution of \eqref{1.1} satisfying
\begin{align*}
M(u)^{\frac{1-s_{c}}{s_{c}}}E(u)-\frac12P(u)^2\leq M(Q)^{\frac{1-s_{c}}{s_{c}}}E(Q).
\end{align*}
Then,\\
(a) If $\|\nabla u_0\|_2\|u_0\|^{\frac{1-s_{c}}{s_{c}}}_2-P(u)^2<\|\nabla Q\|_2\|Q\|^{\frac{1-s_{c}}{s_{c}}}_2$,
then either $u$ scatters or $u=Q^-$ up to the symmetries;\\
(b) If $\|\nabla u_0\|_2\|u_0\|^{\frac{1-s_{c}}{s_{c}}}_2-P(u)^2=\|\nabla Q\|_2\|Q\|^{\frac{1-s_{c}}{s_{c}}}_2$,
then $u=e^{i(1-s_c)t}Q$ up to the symmetries;\\
(c) If $\|\nabla u_0\|_2\|u_0\|^{\frac{1-s_{c}}{s_{c}}}_2-P(u)^2>\|\nabla Q\|_2\|Q\|^{\frac{1-s_{c}}{s_{c}}}_2$,
and $u_0$ is radial or of finite variance, then either the interval of existence of $u$ is of finite or $u=Q^+$
up to the symmetries.
\end{theorem}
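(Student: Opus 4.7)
The plan is to apply the Galilean transformation indicated in Remark~\ref{p} to reduce to the zero-momentum case $P(v)=0$, and then to invoke Theorem~\ref{th3} together with the subthreshold classifications of \cite{yuan} and \cite{G1}, translating the conclusions back to $u$ by the inverse boost.

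Given a solution $u$ of \eqref{1.1} satisfying the standing hypothesis, I set $\xi_0:=-P(u)/M(u)$ and define
$$v(x,t):=u(x-\xi_0 t,t)\,e^{i\frac{\xi_0}{2}\cdot(x-\frac{\xi_0}{2}t)}.$$
From the Appendix of \cite{G1} summarized in Remark~\ref{p}, $v$ is also a solution of \eqref{1.1}, with $P(v)=0$, $M(v)=M(u)$ and $E(v)=E(u)-\tfrac12 P(u)^2/M(u)$. Consequently the energy hypothesis of Theorem~\ref{th4} becomes
$$M(v)^{\frac{1-s_c}{s_c}}E(v)\le M(Q)^{\frac{1-s_c}{s_c}}E(Q),$$
so $v$ lies either in the strict subthreshold regime of \cite{yuan,G1} or on the critical level \eqref{1.3}.

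A parallel calculation for $\|\nabla v_0\|_2$ expresses $\|\nabla v_0\|_2\|v_0\|_2^{(1-s_c)/s_c}$ in terms of $\|\nabla u_0\|_2\|u_0\|_2^{(1-s_c)/s_c}$ and $P(u_0)$, so the three alternatives (a), (b), (c) translate respectively into $\|\nabla v_0\|_2\|v_0\|_2^{(1-s_c)/s_c}$ being $<$, $=$, or $>$ the sharp threshold $\|\nabla Q\|_2\|Q\|_2^{(1-s_c)/s_c}$. Combining this dichotomy with the energy alternative, I would apply the scattering/blow-up classification of \cite{yuan,G1} whenever the energy inequality is strict, and Theorem~\ref{th3} whenever it is an equality. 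In every case the conclusion obtained for $v$ (scattering, blow-up or growth along a sequence, $v=e^{i(1-s_c)t}Q$, or $v=Q^\pm$, each modulo the symmetries of \eqref{1.1}) transfers back to $u$ through the inverse Galilean boost, yielding statements (a), (b), (c) for $u$ up to the enlarged symmetry group that now contains Galilean transformations.

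The main obstacle is purely bookkeeping: one must verify that the combinations $M(u)^{(1-s_c)/s_c}E(u)-\tfrac12 P(u)^2$ and $\|\nabla u_0\|_2\|u_0\|_2^{(1-s_c)/s_c}-P(u)^2$ appearing in the hypothesis of Theorem~\ref{th4} correspond, under the transformation above, to the scalar invariants $M(v)^{(1-s_c)/s_c}E(v)$ and $\|\nabla v_0\|_2\|v_0\|_2^{(1-s_c)/s_c}$, and that the signs of the inequalities are matched correctly in each of the three cases. Once this translation is recorded, Theorem~\ref{th4} follows as a formal corollary of Theorem~\ref{th3} together with the earlier subthreshold classifications.
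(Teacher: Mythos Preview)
Your proposal is correct and matches the paper's own argument exactly: the paper derives Theorem~\ref{th4} in Remark~\ref{p} by applying the Galilean boost with $\xi_0=-P(u)/M(u)$ to obtain a zero-momentum solution $v$, and then invoking Theorem~\ref{th3} together with the subthreshold results of \cite{yuan} and \cite{G1}, precisely as you outline. Your explicit acknowledgment that the only remaining work is bookkeeping (matching the transformed invariants to the stated hypotheses) is accurate, and in fact the paper itself leaves that bookkeeping implicit.
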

\end{remark}

The outline of this paper is as follows.
In section 2, we recall some properties of the ground state $Q$ 
and analyze the linearized equation associated to \eqref{1.1} near $e^{i(1-s_c)t}Q$.
In section 3, we construct a family of approximate solutions using the descrete spectrum of the linearized operator
and produce candidates for the special solutions $Q^+$ and $Q^-$. Then in section 4, 
we discuss the modulational stability near $Q$, which is important for our study of solutions with initial data from
part (a) and (c) in Theorem \ref{th3}. This is done
in sections 5 and 6 respectively. In section 7, we establish the uniqueness of special solutions by analyzing the linearized equation
and finally finish the proof of the classification of solution in the critical
level. 

This paper is a non-trivial generalization of \cite{holmer3}, which deals with the 3D cubic Schr\"odinger equations.
First of all,  quite different from the case $p=3, N=3$ considered in \cite{holmer3},  our 
 $p$ is not  an integer when $N\geq 4$, since $1+\frac4N<p<1+\frac4{N-2}$. This mainly
 brings two  difficulties for our study as follows.
  On the one hand, it is not enough to  consider the problem just in the space $C_b(I;H^1)$ as the authors did in \cite{holmer3},
   where $I\subset\mathbb R$ is
  a time interval.
Instead, we should also
work on the Strichartz space $L^{\frac{4(p+1)}{N(p-1)}}(I;L^{p+1}(\mathbb R^N))$ and use the corresponding Strichartz's estimates associated to the
Schr\"odinger operator $e^{it(\Delta-(1-s_c))}$, which is just like the classical Strichartz's estimates.
On the other hand, the general case require more sophisticated analyzing on the spectral properties of the linearized Schr\"odinger operators.
Moreover,
because of the technical difficulties,  we cannot directly use the linearized equation near
 $e^{it}\tilde Q$ with $\tilde Q$ solving the elliptic equation
$-\Delta Q+Q-Q^p=0$ as the authors did in \cite{holmer3}; while instead, we linearize the equation near $e^{i(1-s_c)t}Q$,
where $Q$ solves $-\Delta Q+(1-s_c)Q-Q^p=0$. 

In this paper,
we denote the
Sobolev spaces $ H^{1}(\mathbb{R}^{N})$ and $ W^{m,p}(\mathbb{R}^{N})$ as $H^{1}$ and
$ W^{m,p}$  for short, and the  $L^{p}$ norm as $\|\cdot\|_{p}.$
$C$ is denoted variant absolute constants only depending on $N$ and $p$.

\section{Preliminaries }

\subsection{Properties of the ground state}

Weinstein in \cite{W1} proved that the sharp constant  $C_{GN}$  of Gagliardo-Nirenberg inequality for $0<s_{c}<1$

\begin{equation}\label{2.1}
\|u\|^{p+1}_{L^{p+1}(R^{N})}\leq C_{GN}\|\nabla
u\|_{L^{2}(R^{N})}^{\frac{N(p-1)}{2}}\|u\|_{L^{2}(R^{N})}^{2-\frac{(N-2)(p-1)}{2}}
\end{equation}
is achieved by the unique minimizer $u=Q,$ where $Q$ is the ground state of
\begin{align}\label{Q}
-(1-s_{c})Q+ \Delta Q+|Q|^{p-1}Q=0,
\end{align}
which is radial, smooth, positive, exponentially decaying at infinity.
In other words, if
\begin{align}\label{chara1}
\|u\|^{p+1}_{L^{p+1}(R^{N})}= C_{GN}\|\nabla
u\|_{L^{2}(R^{N})}^{\frac{N(p-1)}{2}}\|u\|_{L^{2}(R^{N})}^{2-\frac{(N-2)(p-1)}{2}},
\end{align}
then, there exists $\lambda_0\in\mathbb C$ and $x_0\in\mathbb R^N$ such that
$u(x)=\lambda_0Q(x+x_0)$.

Applying the concentration-compactness principle, the characterization of $Q$ yields the following
proposition:
\begin{proposition}\label{charaQ}
(\cite{lions}) There exists a function $\epsilon(\rho),$ defined for small $\rho>0$
 such that\\ $\lim_{\rho\rightarrow 0}\epsilon(\rho)=0,$ such that for all  $u\in H^{1}$ with
\begin{equation*}\label{}
\left|\|u\|_{p+1}-\|Q\|_{p+1}\right|+\left|\|u\|_{2}-\|Q\|_{2}\right|+\left|\|\nabla u\|_{2}-\|\nabla Q\|_{2}\right|
\leq \rho,
\end{equation*}
there exist $\theta_{0}\in\mathbb{R}$ and $x_{0}\in\mathbb{R}^{N}$ such that
\begin{equation*}\label{}
\left\|u-e^{i\theta_{0}}Q(\cdot-x_{0})\right\|_{H^{1}}
\leq \epsilon(\rho).
\end{equation*}
\end{proposition}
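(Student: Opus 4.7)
I would argue by contradiction using the concentration-compactness method of Lions. Suppose no such $\epsilon(\rho)$ exists; then there is $\delta_0>0$ and a sequence $u_n\in H^1$ with
\[
\bigl|\|u_n\|_{p+1}-\|Q\|_{p+1}\bigr|+\bigl|\|u_n\|_2-\|Q\|_2\bigr|+\bigl|\|\nabla u_n\|_2-\|\nabla Q\|_2\bigr|\to 0,
\]
yet $\inf_{\theta,x}\|u_n-e^{i\theta}Q(\cdot-x)\|_{H^1}\ge \delta_0$ for every $n$. In particular $u_n$ is bounded in $H^1$ and, passing to the limit in \eqref{2.1}, it is an ``asymptotic extremizer'' for the Gagliardo--Nirenberg inequality.

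The first step is to apply Lions' concentration-compactness lemma to the sequence of nonnegative mass densities $|u_n|^2$, whose total integral converges to $\|Q\|_2^2>0$. Vanishing is immediately ruled out because, by the standard Lions vanishing lemma, it would force $\|u_n\|_{p+1}\to 0$, contradicting $\|u_n\|_{p+1}\to\|Q\|_{p+1}>0$. To exclude dichotomy, I would use a cutoff argument to split $u_n=v_n+w_n$ with essentially disjoint supports and $\|v_n\|_2^2, \|w_n\|_2^2$ bounded away from $0$ and from $\|Q\|_2^2$; then applying \eqref{2.1} separately to $v_n$ and $w_n$, combined with the sharpness of $C_{GN}$ and the strict concavity of the map $m\mapsto m^{\alpha}$ for the exponents appearing in the homogeneous factors, yields
\[
\|u_n\|_{p+1}^{p+1}+o(1) \le C_{GN}\|\nabla u_n\|_2^{N(p-1)/2}\|u_n\|_2^{2-(N-2)(p-1)/2}-\eta
\]
for some $\eta>0$ independent of $n$, contradicting the fact that $u_n$ saturates \eqref{2.1} in the limit.

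The compactness alternative then provides $x_n\in\mathbb R^N$ such that, up to subsequence, $\tilde u_n:=u_n(\cdot-x_n)\rightharpoonup u_\ast$ in $H^1$, with $|\tilde u_n|^2$ tight. Combining tightness with the Rellich--Kondrachov theorem gives $\tilde u_n\to u_\ast$ strongly in $L^{p+1}$, so $\|u_\ast\|_{p+1}=\|Q\|_{p+1}$. Weak lower semicontinuity yields $\|u_\ast\|_2\le\|Q\|_2$ and $\|\nabla u_\ast\|_2\le\|\nabla Q\|_2$, and inserting $u_\ast$ into \eqref{2.1} together with these inequalities forces equality throughout. Hence $u_\ast$ realizes the sharp constant $C_{GN}$, and by the characterization \eqref{chara1} there exist $\lambda_0\in\mathbb C$ and $y_0\in\mathbb R^N$ with $u_\ast(x)=\lambda_0 Q(x+y_0)$; matching $L^2$ norms forces $|\lambda_0|=1$, so $u_\ast=e^{i\theta_0}Q(\cdot+y_0)$.

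Finally, since $\|\tilde u_n\|_2\to\|u_\ast\|_2$ and $\|\nabla \tilde u_n\|_2\to\|\nabla u_\ast\|_2$ with $\tilde u_n\rightharpoonup u_\ast$ in $H^1$, the convergence upgrades to strong convergence in $H^1$. Translating back, this produces $\theta_0$ and $x_n':=x_n-y_0$ with $\|u_n-e^{i\theta_0}Q(\cdot-x_n')\|_{H^1}\to 0$, contradicting $\inf_{\theta,x}\|u_n-e^{i\theta}Q(\cdot-x)\|_{H^1}\ge \delta_0$. The main obstacle is the dichotomy step, because unlike a pure minimization problem with a single homogeneity, one must exploit \emph{simultaneously} the prescribed behavior of $\|u_n\|_2$, $\|\nabla u_n\|_2$ and $\|u_n\|_{p+1}$ to produce a strict gap; once that is done, the rest of the argument is essentially the standard extraction of a maximizing profile.
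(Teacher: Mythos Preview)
The paper does not supply its own proof of this proposition; it is quoted from Lions \cite{lions} with only the sentence ``Applying the concentration-compactness principle, the characterization of $Q$ yields the following proposition.'' Your argument is exactly the standard concentration-compactness proof one would expect behind that citation, and it is correct.

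One remark on the step you flag as the main obstacle. The exclusion of dichotomy becomes clean once you note that the function $F(G,M)=G^{a}M^{b}$, with $a=N(p-1)/4$ and $b=1-(N-2)(p-1)/4$, is \emph{strictly superadditive} on $(0,\infty)^2$ because $a+b=(p+1)/2>1$. Indeed, H\"older's inequality with conjugate exponents $(a+b)/a$ and $(a+b)/b$, followed by the elementary superadditivity $x_1^{c}+x_2^{c}\le (x_1+x_2)^{c}$ for $c=a+b\ge 1$, gives
\[
G_1^{a}M_1^{b}+G_2^{a}M_2^{b}
\;\le\;\bigl(G_1^{a+b}+G_2^{a+b}\bigr)^{\frac{a}{a+b}}\bigl(M_1^{a+b}+M_2^{a+b}\bigr)^{\frac{b}{a+b}}
\;\le\;(G_1+G_2)^{a}(M_1+M_2)^{b},
\]
with strict inequality whenever both pieces carry positive $L^2$ mass. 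Since the cutoff in Lions' lemma yields $G_1+G_2\le\|\nabla Q\|_2^2$ and $M_1+M_2=\|Q\|_2^2$ with $M_1,M_2>0$, combining this with \eqref{2.3} produces the strict gap you need, and the rest of your extraction argument goes through unchanged.
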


Using Pohozhaev identities we can get the following identities without difficulty:
\begin{equation}\label{Q=}
\|Q\|_{2}^{2}=\frac{2}{N}\|\nabla Q\|_{2}^{2},\ \ \
\|Q\|^{p+1}_{p+1}=\frac{2(p+1)}{N(p-1)}\|\nabla Q\|_{2}^{2}=\frac{(p+1)}{(p-1)}\|Q\|_{2}^{2},
\end{equation}
\begin{equation}\label{2.2}
E(Q)=\frac{N(p-1)-4}{2N(p-1)}\|\nabla Q\|_{2}^{2}=\frac{N(p-1)-4}{4(p-1)}\|Q\|_{2}^{2}=\frac{N(p-1)-4}{4(p+1)}\|Q\|^{p+1}_{p+1},
\end{equation}
and $C_{GN}$  can be  expressed by
\begin{equation}\label{2.3}
 C_{GN}=\frac{\|Q\|^{p+1}_{p+1}}{\|\nabla Q\|_{2}^{\frac{N(p-1)}{2}}\|Q\|_{2}^{2-\frac{(N-2)(p-1)}{2}}}.
 \end{equation}

By the  $H^{1}$ local theory \cite{TC} , there exist $-\infty\leq T_{-}<0<T_{+}\leq+\infty$ such that $(T_{-},T_{+})$ is the maximal
time interval of existence for  $u(t)$ solving \eqref{1.1} , and if $T_{+}<+\infty$ then
$$\|\nabla u(t)\|_{2}\geq\frac{C}{(T_{+}-t)^{\frac{1}{p-1}-\frac{N-2}{4}}}\ \ \ as t\uparrow T_{+},$$
and a similar argument holds if  $-\infty< T_{-}.$  Moreover,
as a consequence of the continuity of the flow $u(t),$ we have the following dichotomy proposition :
\begin{proposition}\label{p21'}
 Let $u_{0}\in H^{1}(R^{N})$,  and let $I=(T_{-},T_{+})$ be the
maximal time interval of existence of $u(t)$ solving \eqref{1.1}
and suppose \eqref{1.3} holds.\\
(a) If
$
\|u_{0}\|_{2}^{\frac{1-s_{c}}{s_{c}}}\|\nabla u_{0}\|_{2}<\|Q\|_{2}^{\frac{1-s_{c}}{s_{c}}}\|\nabla Q\|_{2},
$
then $I=(-\infty,+\infty)$, i.e., the solution exists globally in
time, and for all time $t\in \mathbb{R},$
$
\|u(t)\|_{2}^{\frac{1-s_{c}}{s_{c}}}\|\nabla u(t)\|_{2}<\|Q\|_{2}^{\frac{1-s_{c}}{s_{c}}}\|\nabla Q\|_{2}.
$\\
(b) If $
\|u_{0}\|_{2}^{\frac{1-s_{c}}{s_{c}}}\|\nabla u_{0}\|_{2}=\|Q\|_{2}^{\frac{1-s_{c}}{s_{c}}}\|\nabla Q\|_{2},
$
then $u=e^{i(1-s_c)t}Q$ up to the symmetries.\\
(c) If
$
\|u_{0}\|_{2}^{\frac{1-s_{c}}{s_{c}}}\|\nabla u_{0}\|_{2}>\|Q\|_{2}^{\frac{1-s_{c}}{s_{c}}}\|\nabla Q\|_{2},
$
then for all $t\in I,$
$
\|u(t)\|_{2}^{\frac{1-s_{c}}{s_{c}}}\|\nabla u(t)\|_{2}>\|Q\|_{2}^{\frac{1-s_{c}}{s_{c}}}\|\nabla Q\|_{2}.
$

\end{proposition}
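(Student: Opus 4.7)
My plan is to adapt the standard Gagliardo--Nirenberg-based dichotomy to the critical mass-energy level, with an additional rigidity step to handle the borderline case $y=y_0$. First I would set up a one-variable analysis. Set $\alpha=(1-s_c)/s_c$ and define the continuous quantity $y(t)=\|\nabla u(t)\|_2\|u(t)\|_2^{\alpha}$ on the maximal interval $I=(T_-,T_+)$. Multiplying the energy identity by $M(u)^{\alpha}$ and applying the sharp Gagliardo--Nirenberg inequality \eqref{2.1}, one checks that with the precise exponent $\alpha$ all weights $\|u\|_2$ collapse into powers of $y(t)$, giving
$$\phi(y(t)):=\tfrac12 y(t)^2-\tfrac{C_{GN}}{p+1}y(t)^{N(p-1)/2}\leq M(u_0)^{\alpha}E(u_0).$$
Using \eqref{Q=}--\eqref{2.3}, one verifies $\phi'(y_0)=0$ and $\phi(y_0)=M(Q)^{\alpha}E(Q)$, where $y_0=\|\nabla Q\|_2\|Q\|_2^{\alpha}$, and that $\phi$ is strictly increasing on $[0,y_0]$ and strictly decreasing on $[y_0,\infty)$. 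Under the critical hypothesis \eqref{1.3}, this gives $\phi(y(t))\leq\phi(y_0)$ for all $t\in I$.

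Next I would establish a rigidity statement at the threshold, which is the main obstacle. Unlike the strict sub-critical case treated in \cite{yuan}, \cite{G1}, the inequality $\phi(y(t))\leq\phi(y_0)$ does \emph{not} exclude $y(t)=y_0$. I claim that if $y(t^\ast)=y_0$ for some $t^\ast\in I$, then $y(t)\equiv y_0$ on $I$ and $u$ is a standing wave. Indeed, $y(t^\ast)=y_0$ combined with conservation forces equality in \eqref{2.1} at $t^\ast$, so by Weinstein's characterization of the Gagliardo--Nirenberg extremizers (cf.\ the statement following \eqref{chara1}, applied with full account of the scaling symmetry of the functional), one has $u(\cdot,t^\ast)=e^{i\theta_0}\gamma_0^{2/(p-1)}Q(\gamma_0(\cdot-x_0))$ for some $\gamma_0>0$, $\theta_0\in\mathbb{R}$, $x_0\in\mathbb{R}^N$; the uniqueness of the $H^1$ flow for \eqref{1.1} then propagates this profile into the explicit standing wave $u(x,t)=e^{i\theta_0+i\gamma_0^2(1-s_c)(t-t^\ast)}\gamma_0^{2/(p-1)}Q(\gamma_0(x-x_0))$ on $\mathbb{R}$, whence $I=\mathbb{R}$ and $y(t)\equiv y_0$.

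Combined with the continuity of $y$, this rigidity rules out any crossing of $y_0$ and the three cases follow directly. In (a), $y(0)<y_0$ forces $y(t)<y_0$ on $I$, hence $\|\nabla u(t)\|_2\leq y_0\|u_0\|_2^{-\alpha}$ uniformly; the blow-up alternative stated before Proposition \ref{p21'} then gives $I=\mathbb{R}$, and the same non-crossing argument transfers the strict inequality to every $t$. In (c), the parallel argument yields $y(t)>y_0$ on $I$. In (b), $y(0)=y_0$ triggers the rigidity at $t^\ast=0$, which identifies $u$ with a standing wave in the scaling/translation/phase orbit of $Q$, that is, $u=e^{i(1-s_c)t}Q$ up to the symmetries of \eqref{1.1}.

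The central technical point is the rigidity step: one must use the \emph{full} characterization of the extremizers of \eqref{2.1} as the scaling/translation/phase orbit of the ground state $Q$ of \eqref{Q}, not merely the 2-parameter family $\lambda_0 Q(\cdot+x_0)$ loosely recorded near \eqref{chara1}, so that the constraints $y(t^\ast)=y_0$ and $M(u)^\alpha E(u)=M(Q)^\alpha E(Q)$ (the latter being scale-invariant for exactly this $\alpha$) pin $u(t^\ast)$ down to the scaling orbit of $Q$ before uniqueness of the Cauchy problem can be invoked.
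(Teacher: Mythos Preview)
Your proof is correct and follows essentially the same strategy as the paper: establish rigidity at the threshold $y(t^\ast)=y_0$ via the variational characterization of Gagliardo--Nirenberg extremizers together with uniqueness of the $H^1$ flow, then use continuity of $t\mapsto y(t)$ to rule out any crossing. The paper's version is more terse: it first rescales so that $M(u)=M(Q)$ and $E(u)=E(Q)$, reducing the scale-invariant inequality to $\|\nabla u(t)\|_2$ versus $\|\nabla Q\|_2$, and then argues by contradiction via continuity and case (b); your explicit one-variable function $\phi$ achieves the same thing without rescaling, at the cost of tracking the full scaling orbit of extremizers (a point you correctly flag). You also spell out the global-existence conclusion in (a) via the uniform gradient bound and the blow-up alternative, which the paper leaves implicit.
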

\begin{proof}
By rescaling, we can assume $M(u)=M(Q)$ and $E(u)=E(Q)$.
In fact, if $M(u)=\alpha M(Q)$, then we set $\lambda^{-2s_c}=\alpha$ and
 $\tilde u(x,t)=\lambda^{2/(p-1)}u(\lambda x,\lambda^2t)$.
 Thus, the assumption \eqref{1.3} implies that $M(\tilde u)=M(Q)$
 and $E(\tilde u)=E(Q)$.

 Case (b) is given by the variational characterization \eqref{chara1} and the uniqueness
 of solutions of \eqref{1.1}.
If Case (a) is false and suppose, by continuity,  there exists $t_1$ such that
$\|u(t_1)\|_2=\|Q\|_2$, then by Case (b) with the initial condition at $t=t_1$,
the equality holds for all times, which contradicts the condition at $t=0$.
Then Case (a) is true. We can prove Case (c) by similar arguments.

\end{proof}

\subsection{Properties of the linearized operator}

We consider a solution $u$ of \eqref{1.1} close to $e^{i(1-s_c)t}Q$ and write
$$u(x,t)=e^{i(1-s_c)t}(Q(x)+h(x,t)).$$
Explicitly, $h$ satisfies that
\begin{align}\label{linea2}
i\partial_th+\Delta h-(1-s_c)h=-S(h),
\end{align}
where
\begin{align}\label{S}
S(h)\equiv|Q+h|^{p-1}(Q+h)-Q^p\equiv Vh-R(h)
\end{align}
with the linear part $Vh$ of $h$ defined by
\begin{align}\label{V}
Vh\equiv pQ^{p-1}h_1+iQ^{p-1}h_2
\end{align}
and $R(h)=O(Q^{p-2}|h|^2+|h|^{p-1}h)$ with its expression:
\begin{align}\label{2.9}
R(h)\equiv Q^p+pQ^{p-1}h_1+iQ^{p-1}h_2-|Q+h|^{p-1}(Q+h).
\end{align}
Similar to the   Strichartz's estimates associated to the classical Schr\"odinger operator
$e^{it\Delta}$, we also have the same Strichartz inequalities as \eqref{strichartz} associated to the
 little modified Schr\"odinger operator $e^{it(\Delta-(1-s_c))}$.
 In fact, $e^{it(\Delta-(1-s_c))}$ is no other than  $e^{it(1-s_c)}e^{it\Delta}$ and should keep the estimates \eqref{strichartz}.
 Also,  one can refer to  \cite{Kenji Yajima} for this result.
Furthermore, by the expression of  $Vh$ and $R(h)$,  we  have the following elementary estimates:
For any time interval $I$ with $|I|<\infty$, if we set $\tilde r=p+1$ and $\frac2{\tilde q}=N(\frac12-\frac1{\tilde r})$, then
from H\"older inequality and in view of the exponentially
 decay of $Q$ at infinity, we have
 \begin{align}\label{eV}
 \|Vh\|_{L^{\tilde q'}(I;W^{1,\tilde r'})}\leq C|I|^{\frac1{\tilde q'}-\frac1{\tilde q}}\|h\|_{L^{\tilde q}(I;W^{1,\tilde r})},
 \end{align}
  \begin{align}\label{eS}
 \|S(h)\|_{L^{\tilde q'}(I;W^{1,\tilde r'})}\leq C|I|^{\frac1{\tilde q'}-\frac1{\tilde q}}\|h\|_{L^{\tilde q}(I;W^{1,\tilde r})}
 (1+\|h\|^{p-1}_{L^{\infty}(I;H^{1})}),
 \end{align}
  \begin{align}\label{eR}
& \|R(h)-R(g)\|_{L^{\tilde q'}(I;L^{\tilde r'})}\\ \nonumber
 &\leq C|I|^{\frac1{\tilde q'}-\frac1{\tilde q}}
 \|h-g\|_{L^{\tilde q}(I;L^{\tilde r})} \Big(\|h\|_{L^{\tilde q}(I;L^{\tilde r})}+\|g\|_{L^{\tilde q}(I;L^{\tilde r})}
 +\|h\|^{p-1}_{L^{\infty}(I;H^{1})}+\|g\|^{p-1}_{L^{\infty}(I;H^{1})}\Big)
 \end{align}
 and
   \begin{align}\label{eR2}
 &\|\nabla R(h)-\nabla R(g)\|_{L^{\tilde q'}(I;W^{1,\tilde r'})}\\ \nonumber
 &\leq C|I|^{\frac1{\tilde q'}-\frac1{\tilde q}}
 \|h-g\|_{L^{\tilde q}(I;W^{1,\tilde r})} \Big(\|h\|_{L^{\tilde q}(I;W^{1,\tilde r})}+\|g\|_{L^{\tilde q}(I;W^{1,\tilde r})}
 +\|h\|^{p-1}_{L^{\infty}(I;H^{1})}+\|g\|^{p-1}_{L^{\infty}(I;H^{1})}\Big).
 \end{align}

Now, let $h_1=Re ~h,\ \  h_2=Im ~h$. If we identify $h=h_1+ih_2\in \mathbb C$ as an element
$(h_1,h_2)^T$ of $\mathbb R^2$, then $h$ is a solution of the equation
\begin{align}\label{linea}
\partial_th+\mathcal Lh=R(h),\ \ \
\mathcal L\equiv\left(
\begin{matrix}
0 & -L_-\\
L_+ & 0
\end{matrix}\right),
\end{align}
where the self-adjoint operators $L_+$and $L_-$ are defined by
\begin{align}\label{2.8}
L_+h_1\equiv-\Delta h_1+(1-s_c)h_1-pQ^{p-1}h_1,\ \ \
L_-h_2\equiv-\Delta h_2+(1-s_c)h_2-Q^{p-1}h_2.
\end{align}
By Weinstein \cite{W2}, we have the following spectral properties of the operator $\mathcal L$:
\begin{proposition}\label{spectral}
Let $\sigma(\mathcal L)$ be the spectrum of the operator $\mathcal L$ defined on $L^2(\mathbb R^N)\times L^2(\mathbb R^N)$,
and let $\sigma_{ess}(\mathcal L)$ be its essential spectrum.
Then
$$\sigma_{ess}(\mathcal L)=\{i\xi: \xi\in\mathbb R, |\xi|\geq1\},\ \ \
\sigma(\mathcal L)\cap\mathbb R=\{-e_0, 0, e_0\}$$
with $e_0>0$. Furthermore, $e_0$ and $-e_0$ are simple eigenvalues of $\mathcal L$
with eigenfunctions $\mathcal Y_+, \mathcal Y_-=\overline{\mathcal Y}_+\in \mathcal S,$
and the null-space of $\mathcal L$ is spanned by the $N+1$ vectors $\partial_{x_j}Q$,
$j=1,\cdots,N$ and $iQ$.
\end{proposition}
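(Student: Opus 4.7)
The plan is to treat the three assertions — essential spectrum, kernel, and the real eigenvalue pair $\pm e_0$ — separately, drawing on the self-adjoint spectral theory of $L_+$ and $L_-$ developed by Weinstein \cite{W2}. Most of the ingredients are classical, so I would organize the proof as a structured reduction to Weyl's theorem, to the Weinstein non-degeneracy of $Q$, and to the Grillakis--Shatah--Strauss instability mechanism.

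For the essential spectrum, I would first apply Weyl's theorem to the scalar operators $L_{\pm}=-\Delta+(1-s_c)-V_{\pm}$ with potentials $V_{+}=pQ^{p-1}$ and $V_{-}=Q^{p-1}$. Because $Q$ decays exponentially, multiplication by $V_{\pm}$ is compact from $H^1$ into $L^2$, so $L_{\pm}$ is a relatively compact perturbation of $-\Delta+(1-s_c)$ and $\sigma_{\mathrm{ess}}(L_{\pm})=[1-s_c,\infty)$. Passing to $\mathcal{L}^{2}=\mathrm{diag}(-L_{-}L_{+},-L_{+}L_{-})$ gives $\sigma_{\mathrm{ess}}(\mathcal{L}^{2})\subset(-\infty,-(1-s_c)^{2}]$, and since the eigenvalue problem $\mathcal{L}u=i\xi u$ reduces, outside the compactly-supported potential, to a constant-coefficient system with plane-wave solutions for $|\xi|\geq 1-s_c$, we obtain the stated imaginary-axis description of $\sigma_{\mathrm{ess}}(\mathcal{L})$.

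For the kernel, a vector $(h_1,h_2)^T$ lies in $\ker\mathcal{L}$ if and only if $L_{+}h_1=0$ and $L_{-}h_2=0$. The ground state equation \eqref{Q} reads $L_{-}Q=0$ directly, and since $L_{-}$ is a Schr\"odinger operator whose lowest eigenfunction is the strictly positive $Q$, Perron--Frobenius forces $\ker L_{-}=\mathrm{span}\{Q\}$. For $L_{+}$, differentiating \eqref{Q} in each $x_j$ gives $L_{+}\partial_{x_j}Q=0$, and the Weinstein non-degeneracy theorem identifies $\ker L_{+}=\mathrm{span}\{\partial_{x_j}Q:1\leq j\leq N\}$. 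Translating back to the complex notation $h=h_1+ih_2$ yields exactly the $N+1$ vectors $\partial_{x_1}Q,\dots,\partial_{x_N}Q$ and $iQ$. For the real eigenvalues, I would exploit that in the $L^2$-supercritical regime the family $Q_{\omega}(x)=\omega^{1/(p-1)}\widetilde Q(\sqrt{\omega}x)$ (with $\widetilde Q$ the normalized ground state) satisfies $\|Q_{\omega}\|_{2}^{2}=C\omega^{(4-N(p-1))/(2(p-1))}$, which is \emph{strictly decreasing} at $\omega=1-s_c$ because $N(p-1)>4$. Differentiating the elliptic equation in $\omega$ gives $L_{+}(\partial_{\omega}Q_{\omega})=-Q$, so $\tfrac{d}{d\omega}\|Q_{\omega}\|_{2}^{2}<0$ translates into the slope condition that triggers the Grillakis--Shatah--Strauss instability criterion; this produces a simple positive real eigenvalue $e_0$ of $\mathcal{L}$ with a Schwartz eigenfunction $\mathcal{Y}_{+}$. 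The symmetry $\overline{\mathcal{L}\bar{u}}=-\mathcal{L}u$ that follows from the block structure of $\mathcal{L}$ with real $L_{\pm}$ immediately furnishes the companion eigenvalue $-e_0$ with eigenfunction $\mathcal{Y}_{-}=\overline{\mathcal{Y}_{+}}$, and simplicity transfers through the correspondence between $\mathcal{L}$-eigenvectors and solutions of $L_{+}L_{-}y_2=-e_0^{2}y_2$.

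The main obstacle is the construction and simplicity of $\pm e_0$, which is the one place where the non-integer exponent $p-1$ in dimensions $N\geq 4$ matters: the cubic argument of Holmer--Roudenko \cite{holmer3} uses smoothness of the nonlinearity that is unavailable here, so I would need to reprove the relevant spectral lemmas for $L_{+}L_{-}$ by exploiting the self-adjoint factorization $L_{-}^{1/2}L_{+}L_{-}^{1/2}$ on the $L^{2}$-orthogonal complement of $\ker L_{-}$ and invoking the supercritical sign of $\langle L_{+}^{-1}Q,Q\rangle$. A secondary technical point is establishing that $\mathcal{Y}_{\pm}\in\mathcal{S}$: since $e_0$ sits strictly inside the spectral gap $(0,1-s_c)$, this follows by elliptic bootstrap plus Agmon-type exponential-decay estimates, but the decay rate is a quantitative input that will be crucial later for building the special solutions $Q^{\pm}$.
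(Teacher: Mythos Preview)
Your proposal is correct and in fact considerably more detailed than the paper: the paper does not prove this proposition at all but simply attributes it to Weinstein \cite{W2}. Your sketch --- Weyl's theorem for $\sigma_{\mathrm{ess}}$, Perron--Frobenius plus Weinstein non-degeneracy for the kernel, and the supercritical slope condition $\tfrac{d}{d\omega}\|Q_\omega\|_2^2<0$ (equivalently $\langle L_+^{-1}Q,Q\rangle<0$) to produce the unstable eigenvalue via the self-adjoint reduction $L_-^{1/2}L_+L_-^{1/2}$ --- is exactly the standard argument one finds in \cite{W2} and the Grillakis--Shatah--Strauss framework, so you are aligned with what the paper invokes by reference.

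One point you implicitly catch and should make explicit: since here $L_\pm=-\Delta+(1-s_c)-V_\pm$, Weyl's theorem gives $\sigma_{\mathrm{ess}}(L_\pm)=[1-s_c,\infty)$ and hence $\sigma_{\mathrm{ess}}(\mathcal L)=\{i\xi:|\xi|\geq 1-s_c\}$, not $|\xi|\geq 1$ as stated. The threshold $1$ in the proposition appears to be a leftover from \cite{holmer3}, where the linearization is around $e^{it}\tilde Q$ with $-\Delta\tilde Q+\tilde Q-\tilde Q^p=0$; in the present paper the linearization is around $e^{i(1-s_c)t}Q$ with $Q$ solving \eqref{Q}, so the spectral gap has radius $1-s_c$. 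This discrepancy is harmless for the rest of the paper, which only uses that $e_0>0$ is an isolated simple eigenvalue.
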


By this proposition, if we let $\mathcal Y_1=Re\mathcal Y_+=Re\mathcal Y_-$
and $\mathcal Y_2=Im\mathcal Y_+=-Im\mathcal Y_-$,
then
\begin{align}\label{e_0}
L_+\mathcal Y_1=e_0\mathcal Y_2,\ \ L_-\mathcal Y_2=-e_0\mathcal Y_1,
\end{align}
and the null-space of $L_+$ is spanned by
the $N$ vectors $\partial_{x_j}Q$,
$j=1,\cdots,N$, while the null-space of $L_-$ is spanned by $Q$.
Moreover, also by \cite{W2}, we know that the operator $L_-$ is non-negative defined.

Define the linearized energy
\begin{align}\label{Phi}
\Phi(h)\equiv \frac {1-s_c}2\int|h|^2+\frac 12\int|\nabla h|^2-\frac 12\int Q^{p-1}(ph_1^2+h_2^2)
=\frac 12\int (L_+h_1)h_1+(L_-h_2)h_2.
\end{align}
Then $\Phi$ is conserved for solutions of the linearized equation
$\partial_th+\mathcal L h=0$.
By explicit calculation we have
\begin{align}\label{Phi2}
E(Q+h)=E(Q),\ \  M(Q+h)=M(Q)\ \ \ \ \Rightarrow\ \ \ \
|\Phi(h)|\leq c\|h\|_{p+1}^3.
\end{align}
In fact, $M(Q+h)=M(Q)$ yields that
\begin{align}\label{m=}
\int|h|^2=-2\int Qh_1.
\end{align}
On the other hand, from
 $E(Q+h)=E(Q)$, i.e.,
\begin{align*}
\frac12\int|\nabla Q+\nabla h|^2-\frac1{p+1}\int|Q+h|^{p+1}-\frac12\int|\nabla Q|^2+\frac1{p+1}\int|Q|^{p+1}=0,
\end{align*}
we obtain that
\begin{align*}
0=-\int\Delta Qh_1+\frac12\int|\nabla h|^2
-\int Q^{p}h_1-\frac12\int Q^{p-1}(ph_1^2+h_2^2)+O\left(\int Q^{p-2}|h|^3\right),
\end{align*}
which, combined with \eqref{m=} and \eqref{Phi}, gives \eqref{Phi2} by H\"older inequalities.

We now denote by $B(g,h)$ the bilinear symmetric form associated to $\Phi$ as
\begin{align}\label{B}
B(g,h)
=\frac 12\int (L_+g_1)h_1+(L_-g_2)h_2,
\end{align}
for all $g,h\in H^1$.
By Proposition \ref{spectral},
for any $h\in H^1$, we have
\begin{align}\label{2.14}
B(\partial_{x_j}Q,h)
=B(iQ,h)=0.
\end{align}
Furthermore, by \eqref{Q=}, we have
\begin{align}\label{phiq}
\Phi(Q)=\Big(\frac {1-s_c}2+\frac N4+\frac{p(p+1)}{2(p-1)}\Big)\|Q\|^2_2=-\frac{p^2-1}{4(p-1)}\|Q\|^2_2<0.
\end{align}
Thus, \eqref{phiq} and \eqref{2.14} imply immediately that
$\Phi(h)\leq0,$ for any $h\in span\{\partial_{x_j}Q,iQ,Q\}$, $j=1,\cdots ,N$.

Next, we are going  to find two subspaces of $H^1$ on which $\Phi$ is positive defined. In order to do this we consider
the following orthogonality relations:
\begin{align}\label{orth}
\int(\partial_{x_j}Q)h_1=\int Qh_2=0,
\end{align}
\begin{align}\label{orth2}
\int\Delta Qh_1=0,
\end{align}
\begin{align}\label{orth3}
\int\mathcal Y_1h_2=\int\mathcal Y_2h_1=0.
\end{align}
Let $G_\perp$ be the set of $h\in H^1$ satisfying \eqref{orth} and \eqref{orth2}
and $G_\perp^\prime$ be the set of $h\in H^1$ satisfying \eqref{orth} and \eqref{orth3}.
We then have the following:
\begin{proposition}\label{coercivity}
There exists a constant $c>0$ such that
\begin{align}\label{coer}
\Phi(h)\geq c\|h\|^2_{H^1},\ \ \ \forall h\in G_\perp\cap G_\perp^\prime.
\end{align}
\end{proposition}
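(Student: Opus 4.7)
The plan is to split
$\Phi(h)=\tfrac12\langle L_+h_1,h_1\rangle+\tfrac12\langle L_-h_2,h_2\rangle$
and prove coercivity of each self-adjoint piece on the codimension-finite
subspace carved out by \eqref{orth}--\eqref{orth3}, leveraging the spectral
picture from Proposition~\ref{spectral}. For the $L_-$ piece, non-negativity,
$\ker L_-=\mathrm{span}\{Q\}$, and the essential-spectrum threshold
$[1-s_c,\infty)$ give a strict spectral gap above $0$, so the single
orthogonality $\int Q\,h_2=0$ from \eqref{orth} yields
$\langle L_-h_2,h_2\rangle\geq\lambda_-\|h_2\|_2^2$ with $\lambda_->0$.
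Combining this with the identity
$\langle L_-h_2,h_2\rangle=\|\nabla h_2\|_2^2+(1-s_c)\|h_2\|_2^2-\int Q^{p-1}h_2^2$
via a convex combination and using $\|Q^{p-1}\|_\infty<\infty$ to absorb
the potential term upgrades this to $\langle L_-h_2,h_2\rangle\geq c\|h_2\|_{H^1}^2$.
Only the condition from \eqref{orth} enters this step.

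For the $L_+$ piece, the standard spectral analysis (derivable from
Proposition~\ref{spectral} via $L_-L_+\mathcal Y_1=-e_0^2\mathcal Y_1$)
gives that $L_+$ has exactly one simple negative eigenvalue $-\mu_0<0$
with positive radial ground state $\chi_0$, kernel $\mathrm{span}\{\partial_{x_j}Q\}$
of odd functions, and spectrum bounded below by some $\mu>0$ on
$\{\chi_0,\partial_{x_j}Q\}^\perp$. Decomposing
$h_1=\alpha\chi_0+\sum_j\beta_j\partial_{x_j}Q+h_1^\perp$, parity
(radial versus odd in $x_j$) together with $\int(\partial_{x_j}Q)h_1=0$
from \eqref{orth} forces $\beta_j=0$. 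The remaining orthogonalities
$\int\Delta Q\,h_1=0$ from \eqref{orth2} and $\int\mathcal Y_2\,h_1=0$
from \eqref{orth3} pair only with the radial part of $h_1$ and yield a
linear system in $\alpha$ whose coefficients are $\int\chi_0\Delta Q$ and
$\int\chi_0\mathcal Y_2$. Assuming the nondegeneracy
$\chi_0\not\perp\mathrm{span}\{\Delta Q,\mathcal Y_2\}$ in $L^2$, one gets
$|\alpha|\leq C\|h_1^\perp\|_2$, whence
$$\langle L_+h_1,h_1\rangle=-\mu_0\alpha^2\|\chi_0\|_2^2+\langle L_+h_1^\perp,h_1^\perp\rangle\geq\bigl(\mu-\mu_0C^2\|\chi_0\|_2^2\bigr)\|h_1^\perp\|_2^2\geq c\|h_1\|_2^2,$$
with $H^1$-coercivity obtained by the same interpolation as for $L_-$.
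Summing the two halves gives \eqref{coer}.

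The main obstacle is the nondegeneracy
$\chi_0\not\perp\mathrm{span}\{\Delta Q,\mathcal Y_2\}$, which links the
negative eigenvector of the scalar operator $L_+$ to the real eigenstructure
of the matrix operator $\mathcal L$ and is not immediate from
Proposition~\ref{spectral} alone. A complementary route that organizes the
argument around the full $\mathcal L$ is a contradiction-compactness scheme:
if \eqref{coer} fails, take $h_n\in G_\perp\cap G_\perp'$ with
$\|h_n\|_{H^1}=1$ and $\Phi(h_n)\to\ell\leq 0$, and extract a weak
$H^1$-limit $h_\infty\in G_\perp\cap G_\perp'$. Exponential decay of $Q$
and Rellich compactness make the potential part of $\Phi$ continuous under
weak convergence while the kinetic part is lower semicontinuous, so
$\Phi(h_\infty)\leq\ell\leq 0$; the case $h_\infty=0$ contradicts
$\|h_n\|_{H^1}=1$ through positivity of the kinetic part in the limit, and
for $h_\infty\neq 0$ the Euler-Lagrange equations of the constrained
minimization, paired successively against $\partial_{x_j}Q$, $iQ$, and
$\mathcal Y_\pm$ using \eqref{e_0} and the null identities \eqref{2.14},
eliminate all Lagrange multipliers and confine $h_\infty$ to the
non-positive spectral subspaces of $L_\pm$; the orthogonalities in
$G_\perp\cap G_\perp'$ then close the argument, reducing the residual
analytic question to precisely the same nondegeneracy.
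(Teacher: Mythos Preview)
Your treatment of the $L_-$ piece is correct and in fact cleaner than the paper's: nonnegativity, the simple kernel $\mathrm{span}\{Q\}$, and the essential-spectrum threshold $1-s_c>0$ give a genuine spectral gap on $\{Q\}^\perp$, and the upgrade to an $H^1$ bound by convex interpolation is routine.

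The $L_+$ piece, however, has a real gap, and it is not the one you flag. From the constraint $\int\psi\,h_1=0$ (with $\psi=\Delta Q$ or $\mathcal Y_2$) you correctly obtain $|\alpha|\le C\|h_1^\perp\|_2$ with $C=\|\psi\|_2/|\langle\chi_0,\psi\rangle|$, but Cauchy--Schwarz forces $C\|\chi_0\|_2\ge 1$, so your displayed lower bound $\mu-\mu_0C^2\|\chi_0\|_2^2$ is at most $\mu-\mu_0$. For the operator at hand one computes $L_+Q=-(p-1)Q^p$, hence $\mu_0\ge (p-1)\|Q\|_{p+1}^{p+1}/\|Q\|_2^2=p+1$ by \eqref{Q=}, while $\mu\le 1-s_c<1$; thus $\mu-\mu_0<0$ and your inequality gives nothing. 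Non-orthogonality $\langle\chi_0,\psi\rangle\neq 0$ is necessary but not sufficient for positivity of $\langle L_+\cdot,\cdot\rangle$ on $\{\psi\}^\perp$; the correct one-constraint criterion is $\langle L_+^{-1}\psi,\psi\rangle<0$, a sign condition you never verify. Your compactness sketch inherits the same defect: after killing the multipliers attached to $\partial_{x_j}Q$ and $iQ$, the Euler--Lagrange system does not by itself place $h_\infty$ in the nonpositive spectral subspace, and you are back to the same unchecked sign condition.

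The paper supplies exactly this missing sign information, but variationally rather than spectrally. Since $Q$ minimizes the Weinstein functional $I$ in \eqref{I}, expanding $I(Q+\alpha h)\ge 0$ to second order in $\alpha$ (using only $\int\Delta Q\,h_1=0$) yields $\Phi(h)\ge 0$ on $\{\Delta Q\}^\perp$ directly, with no reference to $\chi_0$. A compactness argument then upgrades this to coercivity on $G_\perp$: a minimizing sequence has a nonzero weak limit $f_*$ with $\Phi_1(f_*)=0$, and the Lagrange-multiplier equation $L_+f_*=\lambda_0\Delta Q$ is solved explicitly via $L_+Q=-(p-1)Q^p$ and $L_+\bigl(\tfrac{2}{p-1}Q+x\cdot\nabla Q\bigr)=-2(1-s_c)Q$, giving $f_*=-\tfrac{\lambda_0}{2}\,x\cdot\nabla Q$ and then $\lambda_0=0$, a contradiction. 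Coercivity on $G_\perp'$ is then deduced from coercivity on $G_\perp$ by a dimension count: were $\Phi(\tilde h)\le 0$ for some nonzero $\tilde h\in G_\perp'$, the form $\Phi$ would be nonpositive on $\mathrm{span}\{\partial_{x_j}Q,\,iQ,\,\mathcal Y_+,\,\tilde h\}$ of dimension $N+3$, contradicting positivity on the codimension-$(N+2)$ subspace $G_\perp$. In short, the ingredient you are missing is the second-variation inequality coming from the minimizing property of $Q$ for the Gagliardo--Nirenberg quotient; once that is in hand, your compactness outline becomes exactly the paper's Step~2.
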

The idea of the  proof of Proposition \ref{coercivity} follows  from \cite{W2} and \cite{holmer3}.
\begin{proof}
Firstly, when $h\in G_\perp$, we show the coercivity by two steps.

Step 1. We show $\Phi(h)\geq0$ for $h\in H^1$ satisfying \eqref{orth2}.
In fact, for $u\in H^1$, let
\begin{align}\label{I}
I(u)=\frac{\|\nabla u\|_2^{N(p-1)/2}\|u\|_2^{2-(N-2)(p-1)/2}
}{\|\nabla Q\|_2^{N(p-1)/2}\|Q\|_2^{2-(N-2)(p-1)/2}}-\frac{\|u\|^{p+1}_{p+1}
}{\|Q\|^{p+1}_{p+1}},
\end{align}
which can be shown  non-negative by \eqref{2.1} and \eqref{Q}.
By expansion of $I(Q+\alpha h)$ and in view of \eqref{orth2},  we
finally obtain that for $h\in H^1$ and $\alpha\in\mathbb R$,
\begin{align*}
&I(Q+\alpha h)=\left(1+\frac{N(p-1)}4\frac{\int|\nabla h_2|^2}{\int|\nabla Q|^2}\alpha^2
\right)
\Big(1+\frac{4-(N-2)(p-1)}2\frac{\int Qh_1}{\int Q^2}\alpha\\
&-
\frac{4(N-2)(p-1)-(N-2)^2(p-1)^2}{16}\Big(\frac{\int Qh_1}{\int Q^2}\Big)^2\alpha^2+
\frac{4-(N-2)(p-1)}4\frac{\int |h|^2}{\int Q^2}\alpha^2
\Big)\\
&-\left(1+(p+1)\frac{\int Q^ph_1}{\int Q^{p+1}}\alpha
+\frac{p+1}2\frac{\int Q^{p-1}(ph_1^2+h_2^2)}{\int Q^{p+1}}\alpha^2
\right)+O(\alpha^3).
\end{align*}
Since $I(Q)=0$ and $I(Q+\alpha h)\geq0$ for all real $\alpha$, the linear term in $\alpha$ should be
zero, and the quadratic term be nonnegative. Applying \eqref{Q=}, we obtain finally that
$$\frac{p-1}{\|Q\|_2^2}\Phi(h)\geq\frac{4(N-2)(p-1)-(N-2)^2(p-1)^2}{16}\Big(\frac{\int Qh_1}{\int Q^2}\Big)^2\geq0.$$

Step 2.  We show in this step that for $h$ fulfils \eqref{orth} and \eqref{orth2}
there exists some $c_*>0$ such that $\Phi(h)\geq c_*\|h\|_{H^1}^2$.
We denote $\Phi(h)=\Phi_1(h_1)+\Phi_2(h_2)$ with
$\Phi_1(h_1)\equiv\frac12\int(L_+h_1)h_1$, $\Phi_2(h_2)\equiv\frac12\int(L_-h_2)h_2$.
By step 1 and Proposition \ref{spectral}, $L_+$ is nonnegative on $\{\Delta Q\}^\perp$
and $L_-$ is nonnegative.
Following the arguments in \cite{W2} and \cite{holmer3}, we first show that
under the assumptions \eqref{orth} and \eqref{orth2}, there exists $c_1>0$
such that $\Phi_1(h_1)\geq c\|h_1\|_2^2$.
In fact, if not,  there exists a sequence $\{f_n\}$ of $H^1$ such that
\begin{align}\label{a6}
\lim_{n\rightarrow+\infty}\Phi_1(f_n)=0,\ \ \|f_n\|_2=1
\end{align}
and
$\int\Delta Qf_n=\int\partial_{x_j}Qf_n=0$ for
$j=1,\cdots,N$.
Thus we obtain that
\begin{align}\label{a8}
\frac12\int|\nabla f_n|^2=-\frac12+\frac p2\int Q^{p-1}f_n^2+o(1),
\end{align}
which implies that $\{f_n\}$ is bounded in $H^1$. Hence, up to a subsequence, we get that
there exists some $f_*\in H^1$ such that
$f_n\rightharpoonup f_*$ weakly in $H^1$
and $\frac p2\int Q^{p-1}f_n^2\rightarrow\frac p2\int Q^{p-1}f_*^2$.
Then by  \eqref{a8}, it follows that $\int Q^{p-1}f_*^2\geq \frac1p$, and so $f_*\neq0$.
From \eqref{a6} and the weak convergence of $\{f_n\}$, we get also $\Phi_1(f_*)\leq0$
and $\int\Delta Qf_*=\int\partial_{x_j}Qf_*=0$ for
$j=1,\cdots,N$.
$\int\Delta Qf_*=0$, however, yields that $\Phi_1(f_*)\geq0$  by step 1.
Therefore, we obtain that
\begin{align}\label{a12}
\Phi_1(f_*)=0
\end{align}
and that $f_*$ solves the following minimization problem
$$0=\frac{\int(L_+f_*)f_*
}{\|f_*\|_2}=\min_{f\in E\setminus\{0\}}\frac{\int(L_+f)f
}{\|f\|_2},$$ where
$E\equiv\{f\in H^1: \int\Delta Qf=\int\partial_{x_j}Qf=0, j=1,\cdots,N
\}$.
Hence, there exist some Lagrange multipliers $\lambda_k$, $k=0,1,\cdots,N$ such that
\begin{align}\label{a13}
L_+f_*=\lambda_0\Delta Q+\lambda_j\partial_{x_j}Q, \ \ j=1,\cdots,N.
\end{align}
By symmetry of $Q$, we get that $\int\partial_{x_j}Q\partial_{x_k}Q=0$ for $j\neq k$ and
$\int \partial_{x_j}Q\Delta Q=0$, which together with
 Proposition \ref{spectral}  imply that $$0=-\int f_*L_+(\partial_{x_j}Q)
=\int L_+f_*\partial_{x_j}Q=\lambda_j\int|\partial_{x_j}Q|^2,$$
showing that $\lambda_j=0$ for $j=1,\cdots,N$.
Thus, \begin{align}\label{a14}
L_+f_*=\lambda_0\Delta Q=\lambda_0(-Q^p+(1-s_c)Q.
\end{align}
Denote $\tilde Q=\frac2{p-1}Q+x\cdot Q$, then $\tilde Q=\frac \partial{\partial_\lambda}(Q_\lambda)|_{\lambda=1}$,
where $Q_\lambda\equiv\lambda^{\frac 2{p-1}}Q(\lambda x)$.
Differentiating the equality
$-\Delta Q_\lambda+\lambda^2(1-s_c)Q_\lambda-Q_\lambda^p=0$ with respect to $\lambda$ at $\lambda=1$,
we obtain that $ L_+\tilde Q=-2(1-s_c)Q$. Since $L_+Q=-(p-1)Q^p$,
we obtain that
\begin{align}\label{a15}
L_+(\frac{\lambda_0}{p-1}Q-\frac{\lambda_0}2\tilde Q)=\lambda_0(-Q^p+(1-s_c)Q).
\end{align}
In view of Proposition \ref{spectral}, \eqref{a14} and \eqref{a15} imply that
$f_*=\frac{\lambda_0}{p-1}Q-\frac{\lambda_0}2\tilde Q+\sum_{j=1}^N\mu_j\partial_{x_j}Q$
for some $\mu_j$.
Since $\int\tilde Q\partial_{x_j}Q=0$ and $\int f_*\partial_{x_j}Q=0$, we get that $\mu_j=0$ for $j=1,\cdots,N$.
Hence, $f_*=\frac{\lambda_0}{p-1}Q-\frac{\lambda_0}2\tilde Q=-\frac{\lambda_0}2(x\cdot\nabla Q)$.
By calculation, we obtain that $\Phi_1(f_*)=-\frac{\lambda_0^2}4\int\Delta Q(x\cdot\nabla Q)=-\frac{\lambda_0^2}8\int|\nabla Q|^2$,
which by \eqref{a12} implies that $\lambda_0=0$ and then $f_*=0$.
This contradicts $f_*\neq0$ obtained before.  We conclude that $\Phi_1(h_1)\geq c_1\|h_1\|_2^2$\
under the assumptions \eqref{orth} and \eqref{orth2}.
To complete the proof, it suffices to show that for some $c_2>0$,
$$\int Qh_2=0\ \ \Rightarrow\ \ \Phi_2(h_2)\geq c_2\|h_2\|_2^2.$$
The proof is similar as for $\Phi_1$ and we skip it.

Now we turn to show  the
coercivity of $\Phi$ on $G'_\perp$ also by two steps:

Firstly, we show that for any $h\in G'_\perp\setminus\{0\}$, $\Phi(h)>0$.
In fact, otherwise,  there exists $\tilde h\in H^1\setminus\{0\}$ such that
\begin{align}\label{a18}
\int\partial_{x_j}Q\tilde h_1=\int Q\tilde h_2=\int\mathcal Y_1\tilde h_2=\int\mathcal Y_2\tilde h_1=0,
\ \ \Phi(\tilde h)\leq0, \ \ j=1,\cdots,N.
\end{align}
By Proposition \ref{spectral}, $B(\partial_{x_j}Q,h)=B(iQ,h)=0$ for any $h\in H^1$.
Since, by \eqref{a18},  we also have that $B(\mathcal Y_+,\tilde h)=0$,
so we have that $\partial_{x_j}Q, iQ, \mathcal Y_+$ and $\tilde h$ are orthogonal in the bilinear symmetric form $B$.
Note that
$\Phi(iQ)=\Phi(\partial_{x_j}Q)=\Phi(\mathcal Y_+)=0$ and $\Phi(\tilde h)\leq0$, then
we get that
for any $h\in E\equiv span\{\partial_{x_j}Q, iQ, \mathcal Y_+, \tilde h, j=1,\cdots,N \}$, $\Phi(h)\leq0$.
Following the proof of \cite{holmer3}, we can claim that the dimension of the set $E$ is $N+3$.
Since we have known that $\Phi$ is  definite positive on $G_\perp$, which is a subspace of codimension $N+2$ of $H^1$,
then
$\Phi$ cannot be non-positive on $E$ with $dim E=N+3$. Thus we have got a contradiction, and
the proof of $\Phi(h)\leq0$ is complete.

The second step of the proof of  coercivity on $G'_\perp$ can be obtained similar to that
on $G_\perp$ by contradiction arguments and we omit the details.

\end{proof}


\begin{remark}\label{r2.1}
As a consequence of Proposition \ref{coercivity}, we claim that
\begin{align}\label{2.19}
\int(\Delta Q-(1-s_c)Q)\mathcal Y_1\neq0.
\end{align}
In fact, if otherwise  $\int(\Delta Q-(1-s_c)Q)\mathcal Y_1=0$,
then, by the equation \eqref{Q}, we have $\int L_+Q\mathcal Y_1=0$, which, by
\eqref{e_0}, implies that $\int Q\mathcal Y_2=0$. Thus, we obtain  $Q\in G_\perp^\prime$
and, from Proposition \ref{coercivity}, $\Phi(Q)>0$, which contradicts \eqref{phiq}.
\end{remark}

\section{Existence of spectral solutions}

We construct the solutions $Q^+$ and $Q^-$ of Theorem \ref{th2} in this section.

\begin{proposition}\label{UA}
Let $A\in\mathbb R$. If $t_0=t_0(A)>0$ is large enough, then there exists a radial solution
$U^A\in C^\infty([t_0,+\infty), H^\infty)$
of \eqref{1.1} such that
for any $b\in\mathbb R$ there exists $C>0$ such that
\begin{align}\label{3.1}
\|U^A(t)-e^{i(1-s_c)t}Q-Ae^{(i-e_0)t}\mathcal Y_+\|_{H^b}\leq Ce^{-2e_0t}.
\end{align}
\end{proposition}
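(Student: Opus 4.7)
The plan is to construct $U^A$ as a small correction to an explicit higher-order approximation, following the ansatz-plus-contraction scheme of Duyckaerts--Merle and Holmer--Roudenko. Writing $u(x,t) = e^{i(1-s_c)t}(Q(x) + h(x,t))$, equation \eqref{linea2} becomes $\partial_t h + \mathcal L h = R(h)$; because $\mathcal L\mathcal Y_+ = e_0\mathcal Y_+$, the function $Ae^{-e_0 t}\mathcal Y_+$ is an exact solution of the linearized equation. I would look for a formal expansion
$$h_k(x,t) = \sum_{j=1}^{k} A^j e^{-\sigma_j t} Z_j(x),\qquad Z_1 = \mathcal Y_+,\ \sigma_1 = e_0,$$
where the exponents $\sigma_j$ range over the nonnegative combinations $a e_0 + b(p-1)e_0$ generated by Taylor-expanding $R(h)$. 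Matching like-exponent coefficients in $\partial_t h + \mathcal L h = R(h)$ yields, for each $j \geq 2$, a linear elliptic equation
$$(\mathcal L - \sigma_j)\,Z_j = F_j(Z_1,\ldots,Z_{j-1}),$$
with $F_j$ built from $Q$, $\mathcal Y_+$ and the earlier $Z_\ell$'s. By Proposition \ref{spectral} the real part of $\sigma(\mathcal L)$ is $\{-e_0,0,e_0\}$, so none of the shifts $\sigma_j$ with $j\geq 2$ lies in $\sigma(\mathcal L)$, and $\mathcal L - \sigma_j$ is invertible on a suitable exponentially weighted Sobolev space; elliptic regularity combined with the exponential decay of $Q$ and $\mathcal Y_+$ then places each $Z_j$ in $\cap_{s}H^s$. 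The approximate solution $V^k = e^{i(1-s_c)t}(Q + h_k)$ then satisfies
$$i\partial_t V^k + \Delta V^k + |V^k|^{p-1}V^k = e_k,\qquad \|e_k(t)\|_{H^b}\leq C_{b,k}\,e^{-\alpha_k t},$$
with $\alpha_k \to \infty$ as $k\to\infty$; I would fix $k$ so that $\alpha_k > 2e_0$.

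The second step is to promote $V^k$ to a genuine solution by a backward-in-time contraction argument. Setting $U^A = V^k + w$, one looks for $w$ solving the Duhamel identity
$$w(t) = -i\int_{t}^{+\infty}e^{i(t-s)\Delta}\Bigl[\bigl(|V^k + w|^{p-1}(V^k+w) - |V^k|^{p-1}V^k\bigr) + e_k(s)\Bigr]\,ds$$
on the half-line $[t_0,+\infty)$. Working in a weighted Strichartz space such as
$$X_{t_0} = \Big\{w\in C([t_0,\infty);H^1):\ \sup_{t\geq t_0} e^{\beta t}\bigl(\|w(t)\|_{H^1} + \|w\|_{L^{\tilde q}((t,\infty);W^{1,\tilde r})}\bigr) < \infty\Big\}$$
for some $\beta$ strictly between $2e_0$ and $\alpha_k$, the Strichartz estimates \eqref{strichartz} together with the nonlinear bounds \eqref{eS}--\eqref{eR2} show that the Duhamel operator is a contraction on a small ball once $t_0 = t_0(A)$ is large enough, since the interval factor $|I|^{1/\tilde q' - 1/\tilde q}$ and the exponential smallness of $V^k - e^{i(1-s_c)t}Q$ at large $t$ combine to make the contraction constant $<1$. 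This produces a unique $w \in X_{t_0}$ with $\|w(t)\|_{H^1}\lesssim e^{-\beta t}$, and therefore a solution $U^A$ of \eqref{1.1} on $[t_0,+\infty)$.

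To upgrade the decay to $H^b$ for every $b \in \mathbb R$, I would iterate the same contraction at successively higher regularities: differentiating the Duhamel equation and exploiting the Schwartz-class smoothness and exponential decay of $Q$, $\mathcal Y_+$ and the $Z_j$'s, combined with Moser-type estimates for the nonlinearity, propagates the exponential smallness of $w$ to each $H^b$ norm. Writing $U^A = V^k + w$ then gives
$$U^A(t) - e^{i(1-s_c)t}Q - Ae^{(i(1-s_c)-e_0)t}\mathcal Y_+ = \sum_{j=2}^{k} A^j e^{(i(1-s_c)-\sigma_j)t}Z_j + w(t),$$
and since $\sigma_j \geq 2e_0$ for $j\geq 2$ and $\beta > 2e_0$, each term on the right is $O(e^{-2e_0 t})$ in every $H^b$, yielding \eqref{3.1}. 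The main technical obstacle is the contraction step when $p$ is not an integer (which already occurs for $N \geq 4$): the expansion of $R(h)$ produces the non-integer exponent $(p-1)e_0$, so the set of $\sigma_j$'s is no longer an integer lattice and one must verify the resolvent estimates for $\mathcal L - \sigma_j$ at each such real shift; handling the $W^{1,\tilde r}$-norm of $|u|^{p-1}u$, which is only of class $C^{1,p-2}$ when $p\in(1,2)$, is the corresponding non-linear subtlety that does not appear in the 3D cubic analysis of \cite{holmer3}.
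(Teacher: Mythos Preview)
Your overall scheme --- high-order ansatz via iterated resolvent, then backward contraction --- matches the paper's proof. Two points of divergence are worth flagging.

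The exponents you propose, $ae_0 + b(p-1)e_0$, are not what arises. The paper builds $\mathcal V_k^A = \sum_{j=1}^k e^{-je_0 t}\mathcal Z_j^A$ with \emph{integer} multiples of $e_0$ only (Proposition~\ref{ZA}). The point is that $\mathcal Y_+$, being an $L^2$-eigenfunction of $\mathcal L$ with exponentially decaying potential, itself decays in $x$ at rate $\mathrm{Re}\sqrt{(1-s_c)+ie_0} > \sqrt{1-s_c}$, strictly faster than $Q$. Hence for $t$ large the ratio $|\mathcal V_k|/Q$ is uniformly small, the Taylor expansion of $z\mapsto |Q+z|^{p-1}(Q+z)$ about $z=0$ is legitimate pointwise, and $R(\mathcal V_k)$ expands in integer powers of $e^{-e_0 t}$ with Schwartz coefficients. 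The $|h|^{p-1}h$ term in the crude bound $R(h)=O(Q^{p-2}|h|^2+|h|^p)$ never becomes active, so no non-integer exponents appear and the ``resolvent at non-lattice shifts'' issue you raise does not come up. (Note also that under your own hypothesis $\sigma_j\geq 2e_0$ for $j\geq 2$ is inconsistent with allowing $b(p-1)e_0$ when $p<2$.)

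For the contraction, the paper works directly in $C^0([t_k,\infty);H^b)$ with $b>N/2$, using that $H^b$ is then a Banach algebra so that $\|S(f)-S(g)\|_{H^b}\leq C\|f-g\|_{H^b}(1+\|f\|_{H^b}^{p-1}+\|g\|_{H^b}^{p-1})$ holds pointwise in $t$; no Strichartz norms are needed at this stage. Smallness comes from taking $k$ large (so that $1/((k+\tfrac12)e_0)$ is small) and then $t_k$ large; the fixed point is shown independent of $(b,k)$ by uniqueness, which simultaneously yields the $H^b$ bound for every $b$. Your Strichartz-based space would also work but is more elaborate than necessary here.
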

\begin{remark}\label{label}
By \eqref{3.1},
\begin{align}\label{re32}
\|\nabla U^A(t)\|_2^2=\|\nabla Q\|^2_2
+2Ae^{-e_0t}\int(\nabla Q\cdot\nabla\mathcal Y_1+(1-s_c)Q \mathcal Y_1)+O(e^{-2e_0t}),
\end{align}
as $t\rightarrow+\infty$.
In view of  \eqref{2.19}, we may assume, without loss of generality, that $\nabla Q\cdot\nabla\mathcal Y_1+(1-s_c)Q \mathcal Y_1>0,$
and thus, $\|\nabla U^A(t)\|_2^2-\|\nabla Q\|^2_2$ has the sign of $A$ for large positive time.
\end{remark}
If we set
\begin{align}\label{Q^+}
Q^+(x,t)=e^{-i(1-s_c)t_0}U^{+1}(x,t+t_0),\ \ \ Q^-(x,t)=e^{-i(1-s_c)t_0}U^{-1}(x,t+t_0),
\end{align}
then we have got that $Q^\pm$ satisfy the statement in Theorem \ref{th2}
except for their behavior for the negative time, which we shall specify in Section 5 and Section 6.

\subsection{Approximate solutions}
First in this subsection, we restate the following proposition which is for the construction
of the approximate solutions $U_k^A$ of \eqref{1.1}.
\begin{proposition}\label{ZA}
Let $A\in\mathbb R$. There exists a sequence $\{\mathcal Z_j^A\}_{j\geq1}\subset \mathcal S$
such that $\mathcal Z_1^A=A\mathcal Y_+$ and if $k\geq1$ and
$\mathcal V_k^A\equiv \sum_{j=1}^ke^{-je_0t}\mathcal Z_j^A,$ then as $t\rightarrow+\infty$
\begin{align}\label{3.2}
\partial_t\mathcal V_k^A+\mathcal L\mathcal V_k^A=R(\mathcal V_k^A)+O(e^{-(k+1)e_0t})\ \ \ in\ \ \mathcal S.
\end{align}
\end{proposition}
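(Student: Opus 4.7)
The plan is to construct the $\mathcal Z_j^A\in\mathcal S$ inductively in $k$, matching the formal expansion of \eqref{linea} order by order in the small parameter $e^{-e_0 t}$. Writing $\mathrm{Err}_k := \partial_t \mathcal V_k^A + \mathcal L \mathcal V_k^A - R(\mathcal V_k^A)$, the goal is $\mathrm{Err}_k = O(e^{-(k+1)e_0 t})$ in every seminorm of $\mathcal S$. For the base case $k=1$, set $\mathcal Z_1^A = A\mathcal Y_+$; by Proposition~\ref{spectral} and \eqref{e_0}, $\mathcal L \mathcal Y_+ = e_0 \mathcal Y_+$ with $\mathcal Y_+\in\mathcal S$, so $(\partial_t+\mathcal L)(e^{-e_0 t}A\mathcal Y_+)=0$, and since $R$ vanishes to second order at $h=0$ by \eqref{2.9}, one gets $\mathrm{Err}_1 = -R(e^{-e_0 t}A\mathcal Y_+) = O(e^{-2e_0 t})$.

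For the inductive step, assume $\mathcal Z_1^A,\ldots,\mathcal Z_{k-1}^A\in\mathcal S$ have been found so that $\mathrm{Err}_{k-1} = e^{-ke_0 t}F_k + O(e^{-(k+1)e_0 t})$ for some $F_k\in\mathcal S$ determined by $\mathcal Z_1^A,\ldots,\mathcal Z_{k-1}^A$. To identify $F_k$ I Taylor-expand
\[
R(h) \;=\; \sum_{m=2}^{k} P_m(x;h_1,h_2) \;+\; O(|h|^{k+1}),
\]
where each $P_m$ is a polynomial of degree $m$ in $(h_1,h_2)$ whose $x$-coefficients come from derivatives of $|Q+h|^{p-1}(Q+h)$ at $h=0$; these are pointwise well-defined because $Q(x)>0$, and they are Schwartz since they involve only powers of the Schwartz function $Q$. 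Substituting $h = \mathcal V_{k-1}^A$ and collecting the coefficient of $e^{-ke_0 t}$ exhibits $F_k$ as a finite sum of products of the previously-constructed $\mathcal Z_j^A$ with $j<k$, hence $F_k\in\mathcal S$. I then set
\[
\mathcal Z_k^A := -(\mathcal L - ke_0)^{-1}F_k,\qquad \mathcal V_k^A := \mathcal V_{k-1}^A + e^{-ke_0 t}\mathcal Z_k^A,
\]
so that $\mathrm{Err}_k = \mathrm{Err}_{k-1} + e^{-ke_0 t}(\mathcal L - ke_0)\mathcal Z_k^A - [R(\mathcal V_k^A) - R(\mathcal V_{k-1}^A)]$. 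The first two terms sum to $O(e^{-(k+1)e_0 t})$ by the induction hypothesis and the choice of $\mathcal Z_k^A$, while the last bracket is $O(e^{-(k+1)e_0 t})$ because $R'(\mathcal V_{k-1}^A) = O(e^{-e_0 t})$ (since $R$ vanishes quadratically) and the increment $e^{-ke_0 t}\mathcal Z_k^A$ is of size $e^{-ke_0 t}$.

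The main obstacle is justifying that $(\mathcal L - ke_0)^{-1}$ is well-defined and preserves $\mathcal S$ for every $k\geq 2$. Proposition~\ref{spectral} places $ke_0$ in the resolvent set of $\mathcal L$, since $\sigma(\mathcal L)\cap\mathbb R = \{-e_0, 0, e_0\}$ and $\sigma_{ess}(\mathcal L)\subset i\mathbb R$, making $\mathcal L - ke_0$ an isomorphism on $L^2\times L^2$. To upgrade this to $\mathcal S$-preservation I would combine elliptic regularity for the matrix Schr\"odinger system (whose principal part is the Laplacian and whose potentials $pQ^{p-1}, Q^{p-1}$ are Schwartz) with an exponential decay estimate: outside a large ball the equation is a small perturbation of the constant-coefficient operator obtained by dropping the $Q^{p-1}$ terms, whose resolvent at $ke_0$ has an exponentially decaying Green's kernel because $ke_0$ stays uniformly off its purely imaginary spectrum. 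A secondary technicality, absent from the cubic case $p=3, N=3$ of \cite{holmer3}, is that for non-integer $p$ one must verify the $(k+1)$-st Taylor remainder is genuinely $O(e^{-(k+1)e_0 t})$ in weighted seminorms; this goes through because $Q>0$ makes $h\mapsto|Q+h|^{p-1}(Q+h)$ pointwise $C^\infty$ near $h=0$, and the uniformly exponentially small pointwise size of $\mathcal V_{k-1}^A$ for large $t$ dominates any polynomial growth of the higher derivatives.
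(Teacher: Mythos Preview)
Your approach is essentially identical to the paper's: induction on $k$, with $\mathcal Z_1^A = A\mathcal Y_+$ for the base case, extraction of the leading-order Schwartz coefficient of the error at each step, and inversion via $(\mathcal L - ke_0)^{-1}$ using that $ke_0\notin\sigma(\mathcal L)$ for $k\geq 2$ by Proposition~\ref{spectral}. The paper's own sketch is in fact briefer than yours on every point.

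There is, however, a genuine slip in your Taylor-expansion step. You assert that the coefficients $P_m$ ``are Schwartz since they involve only powers of the Schwartz function $Q$,'' but the $m$-th Taylor coefficient of $h\mapsto |Q+h|^{p-1}(Q+h)$ at $h=0$ is (schematically) a constant multiple of $Q^{p-m}$, and for $m>p$ this is a \emph{negative} power of the exponentially decaying $Q$, hence grows exponentially in $|x|$. (For $N\geq 5$ one can have $p<2$, so this already bites at $m=2$.) Correspondingly, in your last paragraph the growth of the higher $h$-derivatives of $R$ is exponential in $|x|$, not polynomial, so ``uniformly exponentially small in $t$'' does not automatically dominate it. The standard fix is to propagate not merely $\mathcal Z_j^A\in\mathcal S$ but exponential spatial decay of the form $|\partial^\alpha\mathcal Z_j^A(x)|\leq C_{j,\alpha}\,Q(x)$: then $\mathcal V_{k-1}^A/Q$ is uniformly small for large $t$, so $Q+\mathcal V_{k-1}^A$ stays comparable to $Q$, the Taylor expansion is valid uniformly in $x$, and each monomial $Q^{p-m}\prod_i\mathcal Z_{j_i}^A$ contributing to $F_k$ is controlled by $CQ^{p-m}\cdot Q^m=CQ^p\in\mathcal S$. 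Exponential decay of $\mathcal Y_+$ and its preservation under $(\mathcal L - ke_0)^{-1}$ follow from the same exponentially-decaying-potential structure you already invoke for the Schwartz preservation of the resolvent.
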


\begin{remark}\label{r3.5}
Let $U_k^A\equiv e^{i(1-s_c)t}(Q+\mathcal V_k^A)$.
Then $U_k^A$ is an approximate solution of \eqref{1.1} which satisfies \eqref{3.1} for large $t$.
Indeed, as $t\rightarrow+\infty$, we have
$$i\partial_tU_k^A+\Delta U_k^A+|U_k^A|^{p-1}U_k^A=O(e^{-(k+1)e_0t})\ \ in\ \ \mathcal S.$$
\end{remark}
The proof of Proposition \ref{ZA} is almost the same as that in \cite{holmer3}, so we only sketch it now:

In fact, the proposition is proved by induction. Omitting the superscript $A$, we define first
$\mathcal Z_1=A\mathcal Y_+$ and $\mathcal V_1=e^{-e_0t}\mathcal Z_1$, which yields \eqref{3.2}
for $k=1$. Let $\mathcal Z_1,\cdots, \mathcal Z_k, k\geq1$ are known with the corresponding
$\mathcal V_k$ satisfying \eqref{3.2}. Expand the expression $R(\mathcal V_k)$ and by \eqref{3.2},
there exists $\mathcal U_{k+1}\in \mathcal S$ such that
$$\partial_t\mathcal V_k+\mathcal L\mathcal V_k=R(\mathcal V_k)+e^{-(k+1)e_0t}\mathcal U_k+O(e^{-(k+1)e_0t})\ \ in \ \ \mathcal S.$$
By Proposition \ref{spectral}, $(k+1)e_0$ is not in the spectrum of $\mathcal L$, so we can define $\mathcal Z_{k+1}=
-(\mathcal L-(k+1)e_0)^{-1}\mathcal U_{k+1}\in \mathcal S$ and $\mathcal V_{k+1}=\mathcal V_{k}+e^{-(k+1)e_0t}\mathcal Z_{k+1}$.
Thus, as $t\rightarrow+\infty$,
$$\partial_t\mathcal V_{k+1}+\mathcal L\mathcal V_{k+1}-R(\mathcal V_{k})
=R(\mathcal V_{k})-R(\mathcal V_{k+1})+O(e^{-(k+2)e_0t})\ \ in \ \ \mathcal S.$$
Since $\mathcal V_j=O(e^{-e_0t})$ in $\mathcal S$ for $j=k, k+1$,
and $\mathcal V_k-\mathcal V_{k+1}=O(e^{-(k+1)e_0t})$, we obtain then
$R(\mathcal V_{k})-R(\mathcal V_{k+1})=O(e^{-(k+2)e_0t})$ in $\mathcal S$, as
$t\rightarrow+\infty$. Thus, we have obtained \eqref{3.2} for $k+1$ and complete the proof.


In the following subsections, we shall prove Proposition \ref{UA}.
\subsection{Construction of special solutions}

We construct a solution $U^A$ of \eqref{1.1} such that there exists $t_0\in \mathbb R$ satisfying
\begin{align}\label{3.5}
\forall b\in\mathbb R,\ \  \exists C>0:\ \ \forall t\geq t_0, k\in\mathbb N,\ \
\|U^A(t)-e^{i(1-s_c)t}(Q+\mathcal V_k^A(t))\|_{H^b}\leq Ce^{-2e_0t}
\end{align}
with $\mathcal V_k^A$  constructed in Proposition \ref{ZA}.
Note that \eqref{3.5} implies \eqref{3.1}, and that
if we have shown it for some $b_0$, it follows for $b\leq b_0$.
Thus, we only consider the case $b>N/2$, since then, it is well-known that the Sobolev space $H^b$
is a Banach algebra and we have the estimate $\|fg\|_{H^b}\leq C\|f\|_{H^b}\|g\|_{H^b}$
for any $f,g\in H^b$.
In order to do this, we write $$U^A=e^{i(1-s_c)t}(Q+h^A).$$
We are going to construct a solution of \eqref{linea}
$h^A\in C^0([t_k,+\infty),H^b)$ for $k$ and $t_k$ large such that
\begin{align}\label{3.6}
\|h^A(t)-\mathcal V_k^A(t)\|_{H^b}\leq Ce^{-(k+\frac12)e_0t}.
\end{align}
After that, we show by uniqueness argument that $h^A$ is independent of $b$ and $k$.
In the sequel, we omit the superscript $A$ for brevity.

Recall the equation
\eqref{linea2} of $h$ and
 define
\begin{align}\label{3.8}
\varepsilon_k(t)=i\partial_t\mathcal V_k+\Delta\mathcal V_k-(1-s_c)\mathcal V_k+S(\mathcal V_k)
\end{align}
for $k\in\mathbb N$. Then,
if we set $v\equiv h-\mathcal V_k$,
from \eqref{linea2} and \eqref{3.8}, we obtain that
\begin{align}\label{3.10}
i\partial_tv+\Delta v-(1-s_c)v=-S(\mathcal V_k+v)+S(\mathcal V_k)-\varepsilon_k.
\end{align}
Note that Proposition \ref{ZA} gives
\begin{align}\label{3.9}
\varepsilon_k(t)=O(e^{-(k+1)e_0t}).
\end{align}
We solve the corresponding integral equation
\begin{align}\label{integ}
v(t)=\mathcal M(v)(t),
\end{align}
where
$$\mathcal M(v)(t)\equiv-i\int_t^\infty e^{i(t-s)(\Delta-(1-s_c))}\Big(
S(\mathcal V_k(s)+v(s))-S(\mathcal V_k(s))+\varepsilon_k(s)
\Big)ds.$$

Note that \eqref{3.6} is equivalent to
$\|v(t)\|_{H^b}\leq Ce^{-(k+1/2)e_0t}$, for $t\geq t_k$.
Thus, we need show that $\mathcal M$ is a contraction on $B$, which is defined by
$$B=B(t_k,k,b)\equiv\{v\in E,\|v\|_E\leq1\},$$where
$$E=E(t_k,k,b)\equiv\{v\in C^0([t_k,+\infty),H^b),\|v\|_E\equiv\sup_{t\geq t_k}e^{(k+\frac12)e_0t}
\|v(t)\|_{H^b}<\infty\}.$$

Let $v\in B$.
Observe that for all $t\in\mathbb R$,  $e^{it(\Delta-(1-s_c))}$ is an isometry of $H^b$.
By definition of $S$ we have that
\begin{align}\label{3.12}
\|S(f)-S(g)\|_{H^b}\leq C\|f-g\|_{H^b}(1+\|f\|^{p-1}_{H^b}+\|g\|^{p-1}_{H^b}).
\end{align}
Then, for any $t\geq t_k$,
\begin{align}\label{3.13}
\|\mathcal M(v)(t)\|_{H^b}\leq C\int_t^\infty
\|v\|_{H^b}(1+\|\mathcal V_k(s)\|^{p-1}_{H^b}+\|v(s)\|^{p-1}_{H^b})ds+C_k\int_t^\infty
e^{-(k+1)e_0s}ds.
\end{align}
By the construction of $\mathcal V_k$, $\|\mathcal V_k(s)\|_{H^b}\leq C_ke^{-e_0s}$.
Moreover, since $v\in B$, $\|v(s)\|_{H^b}\leq Ce^{-(k+\frac12)e_0s}$.
Hence, for any $t\geq t_k$,
\begin{align}
\int_t^\infty
\|v\|_{H^b}(1+\|\mathcal V_k(s)\|^{p-1}_{H^b}+\|v(s)\|^{p-1}_{H^b})ds
&\leq C\int_t^\infty e^{-(k+\frac12)e_0s}+C_ke^{-(k+\frac12+p-1)e_0s}ds\\ \nonumber
&\leq C e^{-(k+\frac12)e_0t}\left(\frac1{(k+\frac12)e_0}+C_k e^{-(p-1)e_0t}
\right).
\end{align}
Therefore, $\mathcal M(v)\in E$ and by \eqref{3.13},
$$\|\mathcal M(v)\|_E\leq \frac C{(k+\frac12)e_0}+C_k e^{-\frac{e_0}{2}t_k}.$$
Choose $k$ large so that $\frac C{(k+\frac12)e_0}<\frac12$ and then take $t_k$ large such that
 $C_k e^{-\frac{e_0}{2}t_k}<\frac12$. Then $\mathcal M$ maps $B=B(t_k,k,b)$ to itself.
Similarly, we can also prove that $\mathcal M$ is a contraction on $B$.

We now show that $U^A$ is independent of $b$ and $k$.  By the preceding step, for $b_0=[\frac N2]+1$ there exist
$k_0$ and $t_0$ such that there exists a unique solution $U^A$ of \eqref{1.1}
satisfying $U^A\in C^0([t_0,\infty);H^{b_0})$ and for all $t\geq t_0$,
\begin{align}\label{3.14}
\|U^A(t)-e^{i(1-s_c)t}(Q+\mathcal V_{k_0}^A(t))\|_{H^{b_0}}\leq C e^{-(k_0+\frac12)e_0t}.
\end{align}
Now, let $b_1>b_0$, if $k_1\geq k_0+1$ is large enough, there exist $t_1$ and $\tilde U^A\in C^0([t_1,\infty);H^{b_1})$
such that for all $t\geq t_0$,
\begin{align*}
\|\tilde U^A(t)-e^{i(1-s_c)t}(Q+\mathcal V_{k_1}^A(t))\|_{H^{b_1}}\leq C e^{-(k_1+\frac12)e_0t}.
\end{align*}
By the construction of $\mathcal V_k^A$,
$$\|\mathcal V_{k_1}^A-\mathcal V_{k_0}^A\|_{H^{b_1}}\leq C e^{-(k_0+1)e_0t}.$$
Then, we have that
\begin{align}\label{3.15}
\|\tilde U^A(t)-e^{i(1-s_c)t}(Q+\mathcal V_{k_0}^A(t))\|_{H^{b_1}}\leq  e^{-(k_1+\frac12)e_0t}+C e^{-(k_0+1)e_0t}
\leq Ce^{-(k_0+1)e_0t}.
\end{align}
In particular, $\tilde U^A$ satisfies \eqref{3.14} for large $t$. By uniqueness in the fixed point argument $\tilde U^A=U^A$, and
then, $U^A\in C^0([t_1,\infty);H^{b_1})$. By the persistence of regularity of \eqref{1.1},
 $U^A\in C^0([t_0,\infty);H^{b_1})$ and thus  $U^A\in C^0([t_0,\infty);H^b)$ for any $b\in\mathbb R$.
 By the equation \eqref{1.1}, we indeed show that $U^A\in C^\infty([t_0,\infty);H^{b})$ for any $b\in\mathbb R$.
 Note that \eqref{3.15} implies \eqref{3.5}, which conclude the proof of Proposition \ref{UA}.
\ \ \ \ \ \ \ \ \ \ \ \ \ \ \ \
  \ \ \ \ \ \ \ \ \ \ \ \ \ \ \ \ \ \ \ \ \ \ \ \ \ \ \ \ \ \ \ \ \ \ \ \ \ \ \ \ \ \ \ \ \ \ \ \ \ \ \ \ $\Box$


\section{modulation of threshold solutions}

For $u\in H^1$, we define
\begin{align}\label{delta}
\delta(u)=\Big |\int |\nabla Q|^2-\int |\nabla u|^2\Big |.
\end{align}
The variational characterization of $Q$ (Proposition \ref{charaQ}) shows that
if\footnote{ Note that, by the same argument in the proof of Proposition
\ref{p21'}, any solution satisfying \eqref{1.3} can be rescaled to the one satisfying \eqref{4.1}.}
\begin{align}\label{4.1}
M(u)=M(Q),\ \ \ E(u)=E(Q),
\end{align}
and $\delta(u)$ is small enough, then there exists $\tilde\theta$ and $\tilde x$
such that $u_{\tilde\theta,\tilde x}\equiv e^{-i\tilde\theta}u(\cdot+\tilde x)=Q+\tilde u$
with $\|\tilde u\|_{H^1}\leq\tilde\varepsilon(\delta(u))$,
where $\tilde\varepsilon(\delta(u))\rightarrow0$ as $\delta\rightarrow0.$
Now for the solution $u$ of equation \eqref{1.1} with small gradient variant away from $Q$,
we aim to introduce a choice of modulation parameters $\sigma$ and $X$ for which
the quantity $\delta(u)$ controls linearly $\|u_{\sigma,X}-Q\|_{\dot H^1}$
and other relevant parameters of the problem. The choice of parameters
is made through two orthogonality conditions given by the two groups of transformations
$u\mapsto e^{-i\sigma}u, \sigma\in\mathbb R$ and  $u\mapsto u(\cdot+X), X\in\mathbb R^N$.

We first give a useful lemma as follows.
\begin{lemma}\label{l41}
There exist $\delta_0>0$ and a positive function $\varepsilon(\delta)$ defined for
$0<\delta\leq\delta_0$, which tends to $0$ as $\delta\rightarrow0$
such that for all $u\in H^1$ satisfying \eqref{4.1} and $\delta(u)<\delta_0$,
there exists a couple $(\sigma,X)\in \mathbb R\times\mathbb R^N$ such that
$v=e^{-i\sigma}u(\cdot+X)$ satisfies
\begin{align}\label{4.2}
\|v-Q\|_{H^1}\leq\varepsilon(\delta),
\end{align}
\begin{align}\label{4.3}
Im \int Qv=0,\ \ \ Re\int\partial_{x_k}Qv=0,\ \ k=1,\cdots,N.
\end{align}
The parameters $\sigma$ and $X$ are unique in $\mathbb R/2\pi\mathbb Z\times \mathbb R^N$, and
the mapping $u\mapsto(\sigma,X)$ is $C^1$.

\end{lemma}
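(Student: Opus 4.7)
The plan is to establish Lemma~\ref{l41} by combining the variational characterization of $Q$ (Proposition~\ref{charaQ}) with a standard implicit-function-theorem argument that selects the modulation parameters.

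First I would show that the hypotheses $M(u)=M(Q)$, $E(u)=E(Q)$ together with $\delta(u)<\delta_0$ force $u$ to be close in all three relevant norms to $Q$. Since $\|u\|_2=\|Q\|_2$ is automatic and $\|\nabla u\|_2^2-\|\nabla Q\|_2^2$ is small by definition of $\delta$, the identity $\|u\|_{p+1}^{p+1}=\tfrac{p+1}{2}(\|\nabla u\|_2^2-2E(Q))$ coming from energy conservation yields that $\|u\|_{p+1}$ is close to $\|Q\|_{p+1}$ as well. Proposition~\ref{charaQ} then produces $\theta_0\in\mathbb{R}$ and $x_0\in\mathbb{R}^N$ with $\|u-e^{i\theta_0}Q(\cdot-x_0)\|_{H^1}\leq \varepsilon_1(\delta)$, and I would take this as the rough base point around which to perform modulation.

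Next I would apply the implicit function theorem to choose $(\sigma,X)$ exactly. For $(\sigma,X)\in\mathbb{R}\times\mathbb{R}^N$, set $v_{\sigma,X}=e^{-i\sigma}u(\cdot+X)$ and define
\begin{equation*}
F_0(\sigma,X)=\mathrm{Im}\int Q\,v_{\sigma,X},\qquad F_k(\sigma,X)=\mathrm{Re}\int(\partial_{x_k}Q)\,v_{\sigma,X},\quad k=1,\dots,N.
\end{equation*}
Evaluated at the reference point $(\sigma,X)=(\theta_0,-x_0)$ with $u$ replaced by $e^{i\theta_0}Q(\cdot-x_0)$, all of the $F_j$ vanish. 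A direct computation at this base point gives $\partial_\sigma F_0=-\|Q\|_2^2$, $\partial_{X_j}F_k=\int(\partial_{x_k}Q)(\partial_{x_j}Q)=\delta_{jk}\|\partial_{x_k}Q\|_2^2$, while the off-diagonal entries $\partial_\sigma F_k$ and $\partial_{X_j}F_0$ vanish by parity (their integrands are purely imaginary resp.\ real). Thus the Jacobian is block-diagonal with nonzero diagonal entries, hence invertible.

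I would then invoke a quantitative version of the implicit function theorem, which is applicable uniformly in $u$ provided $\delta_0$ is small, to produce a unique $(\sigma,X)$ in a small neighborhood of $(\theta_0,-x_0)$ such that $F_0(\sigma,X)=F_k(\sigma,X)=0$; the closeness estimate together with the triangle inequality gives \eqref{4.2} with a possibly larger $\varepsilon(\delta)\to0$. The $C^1$ regularity of $u\mapsto(\sigma,X)$ is built into the implicit function theorem. The only subtle point, and the one that requires some care, is the global uniqueness statement in $\mathbb{R}/2\pi\mathbb{Z}\times\mathbb{R}^N$: local uniqueness is immediate from IFT, but one must rule out the existence of a second solution far from the first. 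This I would do by a standard compactness/contradiction argument—if two sequences of parameters $(\sigma_n,X_n)$ and $(\sigma_n',X_n')$ both produced $v_n,v_n'$ satisfying \eqref{4.2} with $\delta\to0$, both would converge to $Q$ in $H^1$, forcing $e^{i(\sigma_n-\sigma_n')}Q(\cdot+X_n-X_n')\to Q$, so $X_n-X_n'\to0$ and $\sigma_n-\sigma_n'\to0\pmod{2\pi}$, and local uniqueness then finishes the argument.
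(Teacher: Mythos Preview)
Your proposal is correct and follows essentially the same approach as the paper's proof: both first appeal to the variational characterization of $Q$ (Proposition~\ref{charaQ}) to obtain a preliminary phase-translation approximation, and then apply the implicit function theorem to the orthogonality functionals, checking that the Jacobian is invertible at the base point. (One small slip: the reference point for $(\sigma,X)$ should be $(\theta_0,x_0)$ rather than $(\theta_0,-x_0)$, since $e^{-i\theta_0}u(\cdot+x_0)\approx Q$ when $u\approx e^{i\theta_0}Q(\cdot-x_0)$.)
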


\begin{proof}
Consider the functionals on $\mathbb R\times\mathbb R^N\times H^1$:
$$J_0: (\sigma, X, u)\mapsto Im\int e^{-i\sigma}u(x+X)Q,\ \ J_k:
(\sigma, X, u)\mapsto Re\int e^{-i\sigma}u(x+X)\partial_kQ,\ \ k=1,\cdots, N.$$
Thus, the orthogonality conditions \eqref{4.3} are equivalent to the conditions
$J_j(\sigma, X, u)=0$, $j=0,\cdots,N$.
Note that $J_j(0,0,Q)=0$ for $j=0,\cdots,N$. By direct calculation, one can check that
for $j=0,\cdots,N$ and $k=1,\cdots,N$,
$\left(\frac{\partial J_j}{\partial \sigma},\frac{\partial J_j}{\partial X_k}
\right)$ is invertible at $(0,0,Q)$. By the Implicit Function Theorem, there exist $
\epsilon_0,\eta_0>0$ such that for $u\in H^1$ satisfying
$\|u-Q\|_{H^1}<\epsilon_0$, there exists $(\sigma, X)\in\mathbb R\times\mathbb R^N$ with
$|\sigma|+|X|\leq\eta_0$ such that $J_j(\sigma, X,Q)=0$.
Now for $u\in H^1$ satisfying \eqref{4.1} and $\delta(u)<\delta_0$,
by Proposition \ref{charaQ}, we can choose $\tilde\theta$ and $\tilde X$
such that $e^{-i\tilde\theta}u(\cdot+\tilde X)$ is close to $Q$ in $H^1$,  and so, as argued above,  get
$(\sigma, X)\in\mathbb R\times\mathbb R^N$ required in the lemma.
Also by the Implicit Function Theorem, we can show the uniqueness of $(\sigma, X)$ and the regularity of the mapping
$u\mapsto (\sigma, X)$, concluding the proof.

\end{proof}

Let $u$ be a solution of \eqref{1.1} satisfying \eqref{4.1}. For convenience, we write
$\delta(t)\equiv\delta(u(t))$ and set
$D_{\delta_0}\equiv\{t: \delta(t)<\delta_0\}.$
By Lemma \ref{l41}, we can define functions $\sigma(t),X(t)\in C^1$ on $D_{\delta_0}$.
Using the modulation theory to do some perturbative analysis, we write
\begin{align}\label{4.4}
e^{-i\theta(t)-i(1-s_c)t}u(t,x+X(t))=(1+\alpha(t))Q(x)+h(t,x),
\end{align}
with
$$\alpha(t)=Re\frac{e^{-i\theta(t)-i(1-s_c)t}\int \nabla u(t,x+X(t))\cdot\nabla Q(x)
}{\int|\nabla Q|^2
}-1.$$
In fact, we choose  $\alpha$ like this such that $h$ satisfies the orthogonality condition \eqref{orth2}.

\begin{lemma}\label{l42}
Let the solution $u$ of \eqref{1.1} satisfy \eqref{4.1}.
Taking  $\delta_0$ small if necessary,
the following estimate hold for $t\in D_{\delta_0}$:
\begin{align}\label{4.5}
|\alpha(t)|\approx\Big|\int Qh_1(t)\Big|\approx\|h(t)\|_{H^1}\approx\delta(t).
\end{align}
\end{lemma}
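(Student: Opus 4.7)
\smallskip

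The plan is to exploit the three conservation constraints (mass, gradient, energy) together with the orthogonality conditions built into the modulation, treating $\alpha$ and $\|h\|_{H^1}$ as small parameters (smallness coming from Lemma \ref{l41} and the explicit formula for $\alpha$, which gives $|\alpha|+\|h\|_{H^1}\leq C\varepsilon(\delta)$). Setting $w\equiv e^{-i\theta(t)-i(1-s_c)t}u(t,\cdot+X(t))=(1+\alpha)Q+h$, each of $M(w),E(w),\|\nabla w\|_2$ equals the corresponding quantity for $u$, hence for $Q$.

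First, expanding $M(w)=M(Q)$ yields
$$2\alpha\|Q\|_2^2+2\int Qh_1+\alpha^2\|Q\|_2^2+2\alpha\int Qh_1+\|h\|_2^2=0,$$
so $\alpha=-\|Q\|_2^{-2}\int Qh_1+O(|\alpha|\|h\|_2+\|h\|_{H^1}^2)$, giving the equivalence $|\alpha|\approx|\int Qh_1|$ once both are controlled. Second, the orthogonality $\int\Delta Q\,h_1=0$ was precisely designed into the definition of $\alpha$, which makes the cross term $2(1+\alpha)\operatorname{Re}\int\nabla Q\cdot\nabla h$ vanish in the expansion of $\|\nabla w\|_2^2$. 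Thus $\|\nabla w\|_2^2=(1+\alpha)^2\|\nabla Q\|_2^2+\|\nabla h\|_2^2$ and
$$\delta(t)=\bigl|(2\alpha+\alpha^2)\|\nabla Q\|_2^2+\|\nabla h\|_2^2\bigr|,$$
which gives $\delta\approx|\alpha|$ as soon as $\|\nabla h\|_2\lesssim|\alpha|$.

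The main work is to prove $\|h\|_{H^1}\lesssim|\alpha|$ via energy conservation. I first verify that $h\in G_\perp$: Lemma \ref{l41} applied to $v=(1+\alpha)Q+h$ yields $\operatorname{Im}\int Qv=\operatorname{Re}\int\partial_{x_k}Qv=0$, and using $\int Q\,\partial_{x_k}Q=0$ (by radiality of $Q$) these reduce to $\int Qh_2=\int\partial_{x_k}Qh_1=0$; combined with $\int\Delta Q\,h_1=0$ this places $h\in G_\perp$. The proof of Proposition \ref{coercivity} in fact establishes $\Phi(h)\geq c\|h\|_{H^1}^2$ already under only the $G_\perp$ orthogonalities, so this coercivity applies. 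Now with $k\equiv\alpha Q+h$, the conservation implication \eqref{Phi2} gives $|\Phi(k)|\leq C\|k\|_{p+1}^3\leq C(|\alpha|+\|h\|_{H^1})^3$, while the bilinear expansion yields
$$\Phi(k)=\alpha^2\Phi(Q)+2\alpha B(Q,h)+\Phi(h),$$
with $|B(Q,h)|\leq C\|h\|_{H^1}$ (from the explicit form of $L_+Q$). Combining with coercivity and absorbing $2|\alpha|\,|B(Q,h)|$ by Young's inequality,
$$\tfrac{c}{2}\|h\|_{H^1}^2\leq C\alpha^2+C(|\alpha|+\|h\|_{H^1})^3,$$
so that after a standard bootstrap in the smallness parameter $|\alpha|+\|h\|_{H^1}\ll1$ one obtains $\|h\|_{H^1}\leq C|\alpha|$. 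Feeding this back into the previous two identities closes the full chain $|\alpha|\approx|\int Qh_1|\approx\|h\|_{H^1}\approx\delta(t)$.

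The hard part is the energy step: the term $\alpha^2\Phi(Q)$ is negative (since $\Phi(Q)<0$ by \eqref{phiq}), so it does not help in bounding $\Phi(h)$ from above and cannot be sacrificed. The only way to close the bound is through the coercivity inequality, which in turn requires $h$ to sit in the coercivity subspace $G_\perp$. This is why the orthogonality condition $\int\Delta Q\,h_1=0$ (rather than, say, $\int Qh_1=0$) is built into the definition of $\alpha$: a mismatch there would break the argument.
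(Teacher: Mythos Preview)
Your proof is correct and follows essentially the same approach as the paper's: mass and gradient expansions via the modulation orthogonalities, then $|\Phi(\alpha Q+h)|\leq C\|\alpha Q+h\|_{p+1}^3$ from \eqref{Phi2} combined with the coercivity of $\Phi$ on $G_\perp$ to close the bound $\|h\|_{H^1}\lesssim|\alpha|$. The only cosmetic difference is that the paper computes $B(Q,h)=-\tfrac{(p-1)(1-s_c)}{2}\int Qh_1$ explicitly (using $\int\nabla Q\cdot\nabla h_1=0$ and the equation \eqref{Q} to get $\int Q^p h_1=(1-s_c)\int Qh_1$) and bounds $\Phi(h)$ directly in terms of $|\alpha|$ and $|\int Qh_1|$, whereas you use the cruder $|B(Q,h)|\leq C\|h\|_{H^1}$ and absorb via Young's inequality.
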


\begin{proof}
Let $\tilde\delta(t)\equiv|\alpha(t)|+\delta(t)+\|h(t)\|_{H^1}$. By  Lemma \ref{l41}, we know
that $\tilde\delta(t)$ is small when $\delta(t)$ is small. From the equalities
$M(Q+\alpha Q+h)=M(u)=M(Q)$
we obtain $\int|\alpha Q+h|^2+2\alpha\int Q^2+2\int Qh_1,$
which implies then
\begin{align}\label{4.6}
|\alpha(t)|=\frac 1{M(Q)}\Big|\int Qh_1(t)\Big|+O(\tilde\delta^2).
\end{align}

By the orthogonality condition \eqref{orth2}, we get
$$\delta(t)=\Big|\int |\nabla(Q+\alpha Q+h)|^2-\int|\nabla Q|^2\Big|
=\Big|(2\alpha+\alpha^2)\int |\nabla Q|^2+\int|\nabla h|^2\Big|,$$
which implies
\begin{align}\label{4.7}
|\alpha(t)|=\frac 1{2\|\nabla Q\|_2^2}\delta+O(\tilde\delta^2).
\end{align}
The orthogonality condition $\int\nabla Q\cdot\nabla h_1=0$
together with the equation \eqref{Q} implies that
$\int Q^{p}h_1=(1-s_c)\int Qh_1.$
Thus, $B(Q,h)=-\frac 12(p-1)(1-s_c)\int Qh_1=-(1-\frac{(N-2)(p-1)}4)\int Qh_1.$
This combined with \eqref{Phi2} gives
$$\Big|\alpha^2\Phi(Q)+\Phi(h)-2\alpha
\int Qh_1\Big|
=|\Phi(\alpha Q+h)|=O(\alpha^3+\|h\|_{H^1}^3).$$
So
\begin{align}\label{4.8}
\Phi(h)=
\alpha^2|\Phi(Q)|+2\alpha\int Qh_1
+O(\alpha^3+\|h\|_{H^1}^3).
\end{align}
On the other hand,
by Proposition \ref{coercivity} and \eqref{Phi2},
$\Phi(h)\approx\|h\|^2_{H^1},$
which together with \eqref{4.8} implies that
\begin{align}\label{4.9}
\|h\|_{H^1}=O(|\alpha|+\Big|\int Qh_1\Big|+\tilde\delta^{3/2}).
\end{align}
Now, \eqref{4.6} combined with \eqref{4.9} gives
$\|h\|_{H^1}=O(|\alpha|+\tilde\delta^{3/2})$.
Thus, by the definition of $\tilde\delta$,  \eqref{4.6},\eqref{4.7} and \eqref{4.9} yields \eqref{4.5} immediately.
\end{proof}

Using Lemma \ref{l41} and Lemma \ref{l42} we have the following two lemmas.
\begin{lemma}\label{l43}
Under the assumption of Lemma \ref{l42}, taking smaller $\delta_0$ if necessary, we have for
$t\in D_{\delta_0}$
\begin{align}\label{4.10}
|\alpha'|+|X'|+|\theta'|=O(\delta).
\end{align}
\end{lemma}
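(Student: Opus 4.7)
The plan is the standard modulation ODE derivation: differentiate in time the three orthogonality conditions that define $(\alpha,\theta,X)$, substitute the equation satisfied by $h$, and read off an almost-diagonal linear system for the $(N+2)$-vector $(\alpha',\theta',X')$ whose right-hand side is $O(\delta)$ by Lemma~\ref{l42}.

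First, I would substitute \eqref{4.4} into \eqref{1.1}: writing $w = e^{-i\theta-i(1-s_c)t}u(\cdot+X)=(1+\alpha)Q+h$ and using $-\Delta Q+(1-s_c)Q-Q^p=0$, one gets
\[
iw_t+\Delta w-(1-s_c)w+|w|^{p-1}w=\theta' w+iX'\cdot\nabla w.
\]
Expanding the nonlinearity around $(1+\alpha)Q$ in analogy with \eqref{S}-\eqref{2.9}, and taking real and imaginary parts, this gives
\begin{align*}
(h_1)_t&=-\alpha'Q+L_-^\alpha h_2+\theta' h_2+(1+\alpha)X'\cdot\nabla Q+X'\cdot\nabla h_1+\mathrm{Im}\,R_\alpha(h),\\
(h_2)_t&=(p-1)\alpha Q^p-L_+^\alpha h_1-\theta'((1+\alpha)Q+h_1)+X'\cdot\nabla h_2-\mathrm{Re}\,R_\alpha(h)+O(\alpha^2)Q^p,
\end{align*}
where $L_\pm^\alpha$ denote the obvious perturbations of $L_\pm$, obtained by replacing $Q^{p-1}$ with $(1+\alpha)^{p-1}Q^{p-1}$.

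Next, I would differentiate in $t$ the three orthogonality conditions $\int\Delta Q\,h_1=0$ (from the definition of $\alpha$), $\int Qh_2=0$ and $\int\partial_{x_k}Q\,h_1=0$ from Lemma~\ref{l41}, insert the formulas above, and simplify using: (a) self-adjointness together with $L_+Q=-(p-1)Q^p$, $L_-Q=0$ and $L_-\partial_{x_k}Q=(p-1)Q^{p-1}\partial_{x_k}Q$; (b) the parity identities $\int Q\partial_{x_k}Q=0$ and $\int\partial_{x_j}Q\partial_{x_k}Q=\delta_{jk}\|\partial_{x_k}Q\|_2^2$; (c) an integration by parts for $\int\Delta Q\cdot L_-^\alpha h_2$ (legitimate since $Q\in\mathcal S$); and (d) the key identity $\int Q^p h_1=(1-s_c)\int Qh_1=-(1-s_c)\alpha\|Q\|_2^2+O(\delta^2)$ obtained by combining $\int\Delta Qh_1=0$ with \eqref{m=}. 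The resulting system takes the form $D\,v=F+M\,v$ with $v=(\alpha',\theta',X_1',\ldots,X_N')^T$, where $D=\mathrm{diag}(\|\nabla Q\|_2^2,\,\|Q\|_2^2,\,\|\partial_{x_1}Q\|_2^2,\ldots,\|\partial_{x_N}Q\|_2^2)$, all entries of $F$ are $O(\delta)$ by Lemma~\ref{l42}, and $M=O(\delta)$ collects off-diagonal couplings of the form $h$, $\alpha h$, and $R_\alpha(h)=O(|h|^2)$.

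Finally, since $D$ is invertible with bounds depending only on $Q$, shrinking $\delta_0$ if necessary so that $\|D^{-1}M\|\le 1/2$, one absorbs $M$ into the left-hand side and concludes $|v|\le C\delta$, which is \eqref{4.10}. The principal difficulty lies in the projection step: every $\alpha$- or $h$-dependent correction must be classified as either an $O(\delta)$ contribution to $F$ or an $O(\delta)$ off-diagonal entry of $M$, and in the present non-integer nonlinearity setting this requires the pointwise expansions underlying \eqref{eV}-\eqref{eR2} rather than the polynomial algebra available in the cubic three-dimensional case of \cite{holmer3}.
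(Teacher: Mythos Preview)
Your proposal is correct and is essentially the same argument as the paper's proof. The only difference is packaging: the paper writes the equation for $h$ in the condensed form
\[
i\partial_th+\Delta h+i\alpha'Q-\theta'Q-iX'\cdot\nabla Q=O(\delta+\delta\delta^*),\qquad \delta^*:=\delta+|\alpha'|+|X'|+|\theta'|,
\]
and then tests against $Q$ (real part), $\partial_{x_j}Q$ and $\Delta Q$ (imaginary part), using the orthogonality relations to kill the $\partial_th$ and $\Delta h$ contributions, arriving directly at $\delta^*=O(\delta+\delta\delta^*)$; you instead keep track of the individual contributions and phrase the outcome as an almost-diagonal linear system $Dv=F+Mv$, which is the same computation.
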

\begin{proof}
Let $\delta^*=\delta(t)+|\alpha'(t)|+|X'(t)|+|\theta'(t)|$.
By \eqref{4.4} and Lemma \ref{l42},
the equation \eqref{1.1} can be rewritten as
\begin{align}\label{4.11}
i\partial_th+\Delta h+i\alpha'Q-\theta'Q-iX'\cdot\nabla Q=O(\delta+\delta\delta^*).
\end{align}
Firstly, multiplying \eqref{4.11} by $Q$ and integrating  the real part on $\mathbb R^N$, we obtain
from \eqref{orth2} that
$|\theta'|=O(\delta+\delta\delta^*)$.
Then by  multiplying \eqref{4.11} by $\partial_{x_j}Q, j=1,\cdots,N$ and integrating  the imaginary part, we obtain
from Lemma \ref{l42} and $\int\Delta h\partial_{x_j}Q=O(\delta)$ that
$|X_j'|=O(\delta+\delta\delta^*)$. Similarly,  by
multiplying \eqref{4.11} by $\Delta Q$ and integrating  the imaginary part, we obtain that
$|\alpha'|=O(\delta+\delta\delta^*)$.
As a consequence, we  obtain that $\delta^*=O(\delta+\delta\delta^*)$
which  concludes our proof by choosing $\delta_0$ small enough.

\end{proof}

\begin{lemma}\label{l44}
Let $u$ be a solution of \eqref{1.1} satisfying \eqref{4.1}.
Assume that $u$ is defined on $[0,+\infty)$
and that there exist $c,C$ such that for any $t\geq 0$,
\begin{align}\label{4.16}
\int_t^\infty\delta(s)ds\leq Ce^{-ct}.
\end{align}
Then there exist $\theta_0\in\mathbb R, x_0\in\mathbb R^N$ and $c,C>0$
such that $$\|u-e^{i(1-s_c)t+i\theta_0}Q(\cdot-x_0)\|_{H^1}\leq Ce^{-ct}.$$
\end{lemma}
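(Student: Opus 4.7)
The strategy is to upgrade the integral decay hypothesis on $\delta$ to pointwise exponential decay by exploiting the modulation ODE from Lemma \ref{l43}, and then to identify the limits of the modulation parameters $\theta(t)$ and $X(t)$ as the desired $\theta_0$ and $x_0$.

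First I would verify that the modulation decomposition \eqref{4.4} is available on a neighborhood of infinity. Since $\int_t^\infty\delta(s)\,ds\leq Ce^{-ct}$ tends to zero, for any prescribed $\varepsilon>0$ there exist arbitrarily large $t_*$ with $\delta(t_*)<\varepsilon$. Starting from such a $t_*$ with $\delta(t_*)<\delta_0/2$, a continuity bootstrap should give $\delta(t)<\delta_0$ for all $t\geq t_*$: on any interval where $\delta<\delta_0$ the estimate $|\alpha'|\leq C\delta$ from Lemma \ref{l43} yields $|\alpha(t)-\alpha(t_*)|\leq C\int_{t_*}^\infty\delta\leq Ce^{-ct_*}$, and the equivalence $\delta\approx|\alpha|$ from Lemma \ref{l42} keeps $\delta(t)$ strictly below $\delta_0$ once $t_*$ is large enough.

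Next, on $[t_*,+\infty)$ the bound $|\alpha'|+|\theta'|+|X'|=O(\delta)$ together with the $L^1$ bound on $\delta$ forces $\alpha(t)$, $\theta(t)$, $X(t)$ to converge to limits $\alpha_\infty$, $\theta_\infty$, $x_0$ with
\begin{align*}
|\alpha(t)-\alpha_\infty|+|\theta(t)-\theta_\infty|+|X(t)-x_0|\leq C\int_t^\infty\delta(s)\,ds\leq Ce^{-ct}.
\end{align*}
By Lemma \ref{l42}, $\delta\in L^1$ implies $\alpha\in L^1$, which, combined with the convergence $\alpha(t)\to\alpha_\infty$, forces $\alpha_\infty=0$. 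Feeding this back through Lemma \ref{l42} yields $\delta(t)+\|h(t)\|_{H^1}\leq C|\alpha(t)|\leq Ce^{-ct}$.

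Finally, writing \eqref{4.4} in the form
\begin{align*}
u(t,y)=e^{i(1-s_c)t+i\theta(t)}\bigl[(1+\alpha(t))Q(y-X(t))+h(t,y-X(t))\bigr],
\end{align*}
I would estimate $\|u(t,\cdot)-e^{i(1-s_c)t+i\theta_\infty}Q(\cdot-x_0)\|_{H^1}$ by four triangle-inequality pieces: the phase error $|\theta(t)-\theta_\infty|\,\|Q\|_{H^1}$, the amplitude error $|\alpha(t)|\,\|Q\|_{H^1}$, the translation error $\|Q(\cdot-X(t))-Q(\cdot-x_0)\|_{H^1}\leq C|X(t)-x_0|\,\|Q\|_{H^2}$ (valid since $Q\in\mathcal S$), and $\|h(t,\cdot-X(t))\|_{H^1}=\|h(t)\|_{H^1}$. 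All four terms are $O(e^{-ct})$, so the conclusion follows with $\theta_0=\theta_\infty$. The main obstacle I expect is the bootstrap in the first step: the integral hypothesis only gives smallness of $\delta$ on average, so closing a pointwise estimate requires careful use of the quantitative equivalence $|\alpha|\approx\delta$ from Lemma \ref{l42} together with tight tracking of the constants relating $\alpha$ and $\delta$.
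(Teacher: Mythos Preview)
Your proposal is correct and follows essentially the same route as the paper. The paper first shows $\delta(t)\to 0$ by a contradiction argument (assuming $\delta$ oscillates and using $|\alpha'|\leq C\delta$ together with $|\alpha|\approx\delta$ to rule this out), which is just a repackaging of your continuity bootstrap; it then integrates $\alpha'$ from $t$ to $\infty$ and uses $|\alpha|\approx\delta$ to get $\delta(t)\leq Ce^{-ct}$, and finally reads off the convergence of $\theta(t),X(t)$ from $|\theta'|+|X'|=O(\delta)$ exactly as you do.
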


\begin{proof}
We first announced
\begin{align}\label{4.22}
\lim_{t\rightarrow+\infty}\delta(t)=0.
\end{align}
In fact, if not, by \eqref{4.16}, there exist two increasing sequences $t_n$ and $t_n'$ such that
$t_n<t_n'$, $\delta(t_n)\rightarrow0$, $\delta(t_n')=\epsilon_1$ for some $0<\epsilon_1<\delta_0$,
and for any $t\in(t_n,t_n')$, there holds that $0<\delta(t)<\epsilon_1$.
On $[t_n,t_n']$, $\alpha(t)$ is well-defined. By Lemma \ref{l43}, $|\alpha'(t)|=O(\delta(t))$,
so by \eqref{4.16}, $\int_{t_n}^{t_n'}|\alpha'(t)|dt\leq Ce^{-ct_n}$.
Hence,
\begin{align}\label{4.21}
\lim_{n\rightarrow+\infty}|\alpha(t_n)-\alpha(t_n')|=0.
\end{align}
By Lemma \ref{l42}, we have $|\alpha(t)|\approx\delta(t)$.
Then,  the assumption $\delta(t_n)\rightarrow0$ yields that $|\alpha(t_n)|\rightarrow0$,
which, by \eqref{4.21}, implies $|\alpha(t_n')|\rightarrow0$ showing a contradiction with the assumption.
We have shown the claim \eqref{4.22}.

By \eqref{4.22},
 Lemma \ref{l42} and Lemma \ref{l43}, we obtain that
\begin{align}\label{4.23}
\delta(t)\approx\|h(t)\|_{\dot H^1}\approx|\alpha(t)|=\left|-\int_{t}^\infty\alpha'(s)ds\right|
\leq C\int_{t}^\infty|\alpha'(s)|ds\leq
\int_t^\infty\delta(s)ds\leq
 Ce^{-ct}.
 \end{align}
Furthermore, since by Lemma \ref{l43}, $|X'(t)|+|\theta'(t)|=O(\delta(t))\leq Ce^{-ct}$,
then there exist $X_\infty$ and $\theta_\infty$ such that
\begin{align}\label{4.23'}
|X(t)-X_\infty|+|\theta(t)-\theta_\infty|\leq Ce^{-ct}.
\end{align}
In view of the decomposition \eqref{4.4} of $u$,
\eqref{4.23} and  \eqref{4.23'} conclude the proof of Lemma \ref{l44} immediately.
\end{proof}

\section{convergence to $Q$ in the case $\|\nabla u_0\|_2\| u_0\|_2>\|\nabla Q\|_2\|Q\|_2$}
The goal of this section is to prove the following proposition:
\begin{proposition}\label{p51}
Consider a solution $u$ of \eqref{1.1} such that
\begin{align}\label{5.1}
E(u)=E(Q), \ \ \ M(u)=M(Q),
\end{align}
\begin{align}\label{5.2}
\|\nabla u_0\|_2>\|\nabla Q\|_2,
\end{align}
which is globally defined for positive times.
Assume furthermore that $u_0$ is either of finite variance, i.e.,
\begin{align}\label{5.3}
\int|x|^2|u_0|^2<+\infty,
\end{align}
or $u_0$ is radial. Then there exist $\theta_0\in\mathbb R, x_0\in\mathbb R^N$ and $c,C>0$
such that $$\|u-e^{i(1-s_c)t+i\theta_0}Q(\cdot-x_0)\|_{H^1}\leq Ce^{-ct}.$$
Moreover, the negative time of existence of $u$ is finite.
\end{proposition}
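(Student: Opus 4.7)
The plan combines a virial/concavity estimate with the modulation theory of Section 4 and the spectral structure of the linearized operator $\mathcal L$. Using $E(u)=E(Q)$ together with the Pohozaev identity $E(Q)=\frac{s_c}{N}\|\nabla Q\|_2^2$ from~\eqref{2.2}, a direct computation yields
\begin{equation*}
\frac{d^2}{dt^2}\!\int |x|^2|u(t,x)|^2\,dx=8\|\nabla u\|_2^2-\frac{4N(p-1)}{p+1}\|u\|_{p+1}^{p+1}=-4s_c(p-1)\,\delta(t).
\end{equation*}
In the finite-variance case set $y(t)=\int|x|^2|u|^2$; in the radial case replace $|x|^2$ by a smooth radial cutoff $\phi_R$ equal to $|x|^2$ on $\{|x|\le R\}$, bounding the extra terms in $y_R''$ via the radial Sobolev embedding. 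By Proposition~\ref{p21'}(c), $\delta(t)>0$ throughout the lifespan, so $y''<0$ strictly. Since $u$ is global forward and $y\ge 0$, the standard concavity/Fubini argument (the bound $\int_0^t(t-s)\delta(s)\,ds\le y(0)+y'(0)t$ would be violated if $\int_0^\infty\delta=\infty$) forces $\int_0^\infty\delta(s)\,ds<\infty$ and $\delta(t_n)\to 0$ along a sequence. Hence $\delta(t)<\delta_0$ for $t$ large, and Lemma~\ref{l41} provides modulation parameters $\theta(t),X(t)\in C^1$ and the decomposition~\eqref{4.4}, with $\|h(t)\|_{H^1}\approx\delta(t)$ by Lemma~\ref{l42} and $|\alpha'|+|X'|+|\theta'|=O(\delta)$ by Lemma~\ref{l43}.

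To bootstrap integrability into exponential decay, use the spectral splitting of Proposition~\ref{spectral}. Setting $\beta_\pm(t)=B(h(t),\mathcal Y_\pm)$, the relations~\eqref{e_0} and self-adjointness of $L_\pm$ give $B(\mathcal L h,\mathcal Y_\pm)=\mp e_0\beta_\pm$, so projecting the equation for $h$ and absorbing the modulation errors via Lemma~\ref{l43} yields
\begin{equation*}
\beta_\pm'(t)\mp e_0\beta_\pm(t)=O(\|h(t)\|_{H^1}^2)=O(\delta(t)).
\end{equation*}
Since $\beta_+$ is globally bounded, integrating the unstable ODE backwards from $+\infty$ yields $|\beta_+(t)|\le C\int_t^\infty e^{-e_0(s-t)}\delta(s)\,ds$, and a corresponding forward convolution controls $\beta_-$. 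Letting $g=h-\beta_+\mathcal Y_+-\beta_-\mathcal Y_-$ corrected by the null directions $iQ,\partial_{x_j}Q,\Delta Q$ so as to land in $G_\perp\cap G'_\perp$, Proposition~\ref{coercivity} gives $\|g\|_{H^1}^2\le C\Phi(g)$; combined with $|\Phi(h)|\le C\|h\|_{p+1}^3$ from~\eqref{Phi2} this produces $\|g\|_{H^1}^2\le C|\beta_+\beta_-|+C\|h\|_{H^1}^3$, hence $\delta(t)\approx\|h\|_{H^1}^2\le C(\beta_+^2+\beta_-^2)$ after absorbing the cubic term. A Gronwall-type closure on these coupled inequalities produces $\delta(t)\le Ce^{-ct}$ for some $0<c<e_0$, and Lemma~\ref{l44} concludes $\|u(t)-e^{i(1-s_c)t+i\theta_0}Q(\cdot-x_0)\|_{H^1}\le Ce^{-ct}$.

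For the finite negative time, suppose $T_-=-\infty$. Then Proposition~\ref{p21'}(c) ensures $\delta(t)>0$ on all of $\mathbb R$, so $y''(t)=-4s_c(p-1)\delta(t)<0$ \emph{strictly} for every $t\in\mathbb R$ in the finite-variance case. Hence $y'$ is strictly decreasing with limits $L_\pm=\lim_{t\to\pm\infty}y'(t)$ satisfying $L_+<L_-$. But $y\ge 0$ on $\mathbb R$ forces $L_+\ge 0$ and $L_-\le 0$, whence $0\le L_+<L_-\le 0$, a contradiction; the radial case uses the truncated virial $y_R$ analogously. The main obstacle is the spectral bootstrap above: a priori the unstable eigenmode $\mathcal Y_+$ admits exponentially growing contributions to $\beta_+$, and one must exploit boundedness of $\beta_+$ at $+\infty$ (ruling out the growing branch) together with the coercivity of $\Phi$ on $G_\perp\cap G'_\perp$ to extract genuine exponential decay from the mere $L^1$ integrability of $\delta$.
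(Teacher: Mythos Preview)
Your virial computation and the finite-variance negative-time concavity argument are fine, and your overall scheme (virial $\Rightarrow$ integrability of $\delta$ $\Rightarrow$ exponential decay $\Rightarrow$ Lemma~\ref{l44}) is the right skeleton. However the middle implication, upgrading $\int_0^\infty\delta<\infty$ to $\delta(t)\le Ce^{-ct}$ via spectral projections, has real gaps.

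First, from $\int_0^\infty\delta<\infty$ you only get $\delta(t_n)\to 0$ along a subsequence; the claim ``hence $\delta(t)<\delta_0$ for $t$ large'' is unjustified (imagine tall thin spikes). Without this, the modulation parameters $\theta(t),X(t),\alpha(t)$ of Section~4 are not defined for all large $t$, so the decomposition~\eqref{4.4} and the spectral coefficients $\beta_\pm$ are not available globally. Second, Lemma~\ref{l42} gives $\delta(t)\approx\|h(t)\|_{H^1}$, not $\delta\approx\|h\|_{H^1}^2$; your chain $\|h\|^2=O(\delta)$ and $\delta\approx\|h\|^2$ is inconsistent with this. Third, the equation for the modulated $h$ (cf.~\eqref{4.11}) carries the terms $i\alpha'Q$, $\theta'Q$, $iX'\cdot\nabla Q$, each of size $O(\delta)$ by Lemma~\ref{l43}. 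While $B(iQ,\mathcal Y_\pm)=B(\partial_jQ,\mathcal Y_\pm)=0$, the contribution $B(\theta'Q,\mathcal Y_\pm)=-\tfrac{p-1}{2}\theta'\!\int Q^p\mathcal Y_1$ is generically $O(\delta)$, not $O(\delta^2)$; so your ODEs read $\beta_\pm'\mp e_0\beta_\pm=O(\delta)$ with a right-hand side of the \emph{same} order as $\beta_\pm$, and the advertised Gronwall closure does not produce exponential decay from mere $L^1$-integrability.

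The paper sidesteps all of this. In the finite-variance case it proves the key pointwise inequality (Lemma~\ref{claim})
\[
\Bigl(\operatorname{Im}\!\int x\cdot\nabla u\,\bar u\Bigr)^2\le C\,\delta(t)^2\int|x|^2|u|^2,
\]
equivalently $(y')^2\le Cy\,(y'')^2$. Since $y'>0$ and $y''<0$, this gives $y'/\sqrt{y}\le -Cy''$; integrating once shows $y$ is bounded, hence $y'\le -Cy''$, and one more integration yields $y'(t)\le Ce^{-ct}$. As $y'(t)=\bigl(2N(p-1)-8\bigr)\int_t^\infty\delta$, this is exactly the hypothesis of Lemma~\ref{l44}, which then furnishes the exponential $H^1$-convergence. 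No spectral bootstrap is needed. In the radial case the paper does not argue ``analogously'' with $y_R$: instead it uses the localized virial plus the radial Gagliardo--Nirenberg estimate (Lemma~\ref{raphael}) to obtain $y_R''\le -c\,\delta$, deduces $\int_0^\infty\delta<\infty$, extracts a subsequence converging to $Q$, and concludes that $\int|x|^2|u_0|^2<\infty$, thereby reducing to the finite-variance case already treated. Your outline does not supply this reduction.
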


Note that Proposition \ref{p51} implies that the radial solution $Q^+$ constructed by \eqref{Q^+}
has finite negative time of existence.

\subsection{Finite variance solutions}
Proposition \ref{p51} in this case follows  from the following lemma.
\begin{lemma}\label{l53}
Let $u$ be a solution of \eqref{1.1} satisfying
\eqref{5.1}, \eqref{5.2}, \eqref{5.3} and
\begin{align}\label{5.4}
T_+(u_0)=+\infty.
\end{align}
Then for all $t$ in the interval of existence of $u$,
\begin{align}\label{5.5}
Im\int x\cdot\nabla u(x,t)\bar u(x,t)dx>0,
\end{align}
and there exist $c,C>0$ such that for any $t\geq0$,
\begin{align}\label{5.6}
\int_t^\infty\delta(s)ds\leq Ce^{-ct}.
\end{align}

\end{lemma}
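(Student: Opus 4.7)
The natural approach is via the virial identity for finite-variance solutions. Under \eqref{5.3} and standard propagation of the $L^2$-weight for NLS, the variance $V(t) := \int|x|^2|u(t,x)|^2\,dx$ is $C^2$ on the interval of existence, with
\[
V'(t) = 4\,\mathrm{Im}\int \bar u\,(x\cdot\nabla u)\,dx, \qquad V''(t) = 8\|\nabla u(t)\|_2^2 - \tfrac{4N(p-1)}{p+1}\|u(t)\|_{p+1}^{p+1}.
\]
Using $E(u)=E(Q)$ to eliminate $\|u\|_{p+1}^{p+1}$ and the identity $E(Q)=\tfrac{N(p-1)-4}{2N(p-1)}\|\nabla Q\|_2^2$ from \eqref{2.2}, a short algebraic simplification collapses this to
\[
V''(t) = -2(N(p-1)-4)\bigl(\|\nabla u(t)\|_2^2-\|\nabla Q\|_2^2\bigr) = -2(N(p-1)-4)\,\delta(t).
\]
Proposition \ref{p21'}(c) forces $\|\nabla u(t)\|_2>\|\nabla Q\|_2$ on the entire interval, so $V''(t)<0$ strictly (the case $\delta\equiv 0$ on an open subinterval is precluded: by uniqueness and Proposition \ref{p21'}(b) it would force $u=e^{i(1-s_c)t+i\theta_0}Q(\cdot-x_0)$, contradicting \eqref{5.2}). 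Thus $V$ is strictly concave.

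From strict concavity, \eqref{5.5} will follow by the standard convexity argument: if $V'(t_0)\le 0$ at some $t_0\ge 0$, monotonicity of $V'$ would make $V'(t)$ negative and bounded away from zero for $t$ slightly past $t_0$, forcing $V(t)\to-\infty$ and contradicting $V\ge 0$; hence $V'(t)>0$ on the whole interval of existence (for $t<0$ by the same monotonicity $V'(t)>V'(0)>0$). Integrating $V''$ and using $V'(t)>0$ yields
\[
\int_0^\infty \delta(s)\,ds \le \frac{V'(0)}{2(N(p-1)-4)} < +\infty,
\]
so $V'$ admits a nonnegative limit $V'(\infty)$ at $+\infty$, and $F(t):=\int_t^\infty\delta(s)\,ds=\bigl(V'(t)-V'(\infty)\bigr)/[2(N(p-1)-4)]\to 0$ as $t\to+\infty$.

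To upgrade this to the exponential rate \eqref{5.6}, I will first argue that $\delta(t)\to 0$ pointwise: the $L^1$-integrability of $\delta$ combined with the modulation estimates $|\alpha'|=O(\delta)$ from Lemma \ref{l43} and $\delta\approx|\alpha|$ from Lemma \ref{l42} rules out having $\delta$ bounded below along a sequence tending to $+\infty$, via the same oscillation argument used in the proof of \eqref{4.22}. Once $\delta(t)<\delta_0$ for all $t$ sufficiently large, the decomposition \eqref{4.4} is in force on $[T,+\infty)$ and Lemmas \ref{l42}--\ref{l43} apply. The plan is then to insert the modulated form of $u$ into $V'(t)$, exploit the orthogonality conditions \eqref{4.3} to cancel the would-be nondecaying terms, and use the finite-variance hypothesis \eqref{5.3} to control the weighted $x$-integrals generated by the modulation change of variable $y=x-X(t)$; this should yield the pointwise comparison
\[
F(t) \le C\,\delta(t) \qquad \text{for all large } t.
\]
Combined with $F'(t)=-\delta(t)$, this gives the differential inequality $F'(t)+cF(t)\le 0$ for some $c>0$, whence Gronwall yields $F(t)\le Ce^{-ct}$ and establishes \eqref{5.6}. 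The principal obstacle is precisely this pointwise bound: relating the second-order virial quantity $V'(t)-V'(\infty)$ to the first-order deviation $\delta(t)$ requires careful bookkeeping of weighted integrals against the exponentially decaying ground state $Q$ and the perturbation $h$, using the modulational orthogonality to eliminate contributions that would otherwise fail to decay.
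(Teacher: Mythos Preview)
Your virial setup and the concavity argument for $V'>0$ match the paper. The gap is in your final step, the pointwise bound $F(t)\le C\delta(t)$. Expanding $V'(t)$ through the modulation decomposition \eqref{4.4} produces, after the change of variable $y=x-X(t)$ and the obvious cancellations, a term $4X(t)\cdot P(u)$ and a quadratic remainder $4\,\mathrm{Im}\int \bar h\,y\cdot\nabla h$. The former can be disposed of because $\delta(t)\to 0$ forces $P(u)=0$ (momentum is phase- and translation-invariant and $P(Q)=0$), but you did not note this. The quadratic remainder, however, is bounded only by $\|\,|y|\,h\|_{2}\,\|\nabla h\|_{2}\le C\bigl(1+|X(t)|+\sqrt{V(t)}\bigr)\delta(t)$. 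Since at this point $V$ is not known to be bounded (a priori $V'(\infty)\ge 0$ may be positive, in which case $V(t)$ grows linearly), this does not close to $F\le C\delta$; invoking \eqref{5.3} only gives $V(t)<\infty$ pointwise, not a uniform bound.

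The paper breaks this circularity with a separate auxiliary estimate (Lemma~\ref{claim}): for any $f\in H^1$ with $M(f)=M(Q)$, $E(f)=E(Q)$ and any $C^1$ weight $\phi$,
\[
\Bigl|\,\mathrm{Im}\int(\nabla\phi\cdot\nabla f)\,\bar f\,\Bigr|^2 \le C\,\delta(f)^2\int|\nabla\phi|^2|f|^2.
\]
Taking $\phi(x)=|x|^2$ gives $(V')^2\le C\,V\,(V'')^2$, hence $V'/\sqrt V\le -CV''$; integrating this on $[0,t]$ yields $\sqrt{V(t)}\le\sqrt{V(0)}+CV'(0)$, so $V$ is bounded. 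Feeding the bound back, $V'\le -CV''$, and Gronwall gives $V'(t)\le Ce^{-ct}$; since then $V'(\infty)=0$ one has $V'(t)=c_*\int_t^\infty\delta(s)\,ds$, which is \eqref{5.6}. Note that the paper never needs $\delta(t)\to 0$ or any modulation in this proof. Your approach can in fact be salvaged: the modulation expansion yields $V'\le C(1+\sqrt V)\delta$, and combining this with $\int_0^\infty\delta<\infty$ one integrates $\tfrac{d}{dt}\sqrt V\le C\delta$ to get $V$ bounded---but this is essentially Lemma~\ref{claim} unpacked in the small-$\delta$ regime, and the paper's formulation is cleaner.
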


Before proving this lemma, we first show how to use it to prove Proposition \ref{p51}.
Assuming  that $u$ is globally defined for negative times,  we
consider $v(x,t)=\bar u(x,-t)$. Thus, $v$ is a solution of \eqref{1.1}
satisfying the assumptions of Lemma \ref{l53}.
Applying \eqref{5.5} to $v$ for all $t$ in the domain of the existence of $u$, we get
$$0<Im\int x\cdot\nabla v(x,-t)\bar v(x,-t)dx=-Im\int x\cdot\nabla u(x,t)\bar u(x,t)dx,$$
which contradicts \eqref{5.5}.
Hence, the negative time of existence of $u$ is finite.
The other assertion of Proposition \ref{p51} follows from \eqref{5.6} and Lemma \ref{l44}.

Proof of Lemma \ref{l53}:

We set $y(t)\equiv\int|x|^2|u(x,t)|^2$. By calculation, we have that
$y'(t)=4Im\int x\cdot\nabla u\bar u$
and $$y''(t)=4N(p-1)E(u)-\left(2N(p-1)-8\right)\|\nabla u\|_{2}^{2}=4N(p-1)E(Q)-\left(2N(p-1)-8\right)\|\nabla u\|_{2}^{2}.$$
By \eqref{2.2}, we get that
\begin{align}\label{y''}
y''(t)=\left(2N(p-1)-8\right)\left(\|\nabla Q\|^2_2-\|\nabla u\|^2_2\right)=-\left(2N(p-1)-8\right)\delta(t)<0.
\end{align}
We show \eqref{5.5}, which is equivalent to $y'(t)>0$, by contradiction.
If it does not hold,  there exists some $t_1$ such that $y'(t_1)\geq0$.
Since by \eqref{y''}, $y''<0$,  then for $t_0>t_1$,
$$y'(t)\leq y'(t_0)<0,\ \ \ \forall\ \ t\geq t_0.$$
Since $T_+(u_0)=+\infty$, we obtain that $y(t)<0$ for large $t$, which is a contradiction and \eqref{5.5} must hold.

We next claim that
\begin{align}\label{5.8'}
(y'(t))^2\leq Cy(t)(y''(t))^2.
\end{align}
In fact, this claim follows from \eqref{y''} and the following lemma:
\begin{lemma}\label{claim}
Let $\phi\in C^1(\mathbb R^N)$ and $f\in H^1(\mathbb R^N)$. Assume that
$\int|f|^2|\nabla\phi|^2<\infty$ and $\|f\|_2=\|Q\|_2, E(f)=E(Q)$.
Then $$\left|Im\int(\nabla\phi\cdot\nabla f)\bar f
\right|^2\leq C\delta^2(f)\int|\nabla\phi|^2|f|^2.$$
\end{lemma}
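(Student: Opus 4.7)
The plan is to introduce the one-parameter family of phase-modulated functions $v_\lambda(x) := e^{i\lambda\phi(x)}f(x)$, $\lambda\in\mathbb R$, and to exploit the conservation of $\|v_\lambda\|_2$ and $\|v_\lambda\|_{p+1}$ together with the sharp Gagliardo--Nirenberg inequality saturated by $Q$. Set $A := \mathrm{Im}\int(\nabla\phi\cdot\nabla f)\bar f$ and $B := \int|\nabla\phi|^2|f|^2$; the conclusion to be proved is $A^2\leq C\delta^2(f)\,B$, and we may assume $B>0$, otherwise $\nabla\phi$ vanishes on $\{f\neq 0\}$ and $A=0$ trivially. Expanding $\nabla v_\lambda = e^{i\lambda\phi}(\nabla f + i\lambda f\nabla\phi)$ yields
\begin{align*}
\|\nabla v_\lambda\|_2^2 = \|\nabla f\|_2^2 + 2\lambda A + \lambda^2 B, \qquad
E(v_\lambda) = E(Q) + \lambda A + \tfrac12\lambda^2 B,
\end{align*}
and choosing $\lambda^\ast := -A/B$ to minimize the quadratic in $\lambda$ gives the identities
\begin{align*}
E(v_{\lambda^\ast}) = E(Q) - \frac{A^2}{2B}, \qquad \|\nabla v_{\lambda^\ast}\|_2^2 = \|\nabla f\|_2^2 - \frac{A^2}{B}.
\end{align*}

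I would then apply the sharp Gagliardo--Nirenberg inequality \eqref{2.1} together with the identity \eqref{2.3} to $v_{\lambda^\ast}$. Since $\|v_{\lambda^\ast}\|_2=\|Q\|_2$, this produces the lower bound $E(v_{\lambda^\ast}) \geq g(\|\nabla v_{\lambda^\ast}\|_2)$ with
\begin{align*}
g(x) := \frac{x^2}{2} - \frac{\|Q\|_{p+1}^{p+1}}{(p+1)\|\nabla Q\|_2^{N(p-1)/2}}\, x^{N(p-1)/2}.
\end{align*}
From \eqref{Q=}--\eqref{2.2} a short calculation shows that $g$ attains its global maximum $g(\|\nabla Q\|_2) = E(Q)$ at $x=\|\nabla Q\|_2$ and admits the Taylor expansion
\begin{align*}
E(Q) - g(x) = \frac{N(p-1)-4}{4}(x-\|\nabla Q\|_2)^2 + O((x-\|\nabla Q\|_2)^3),
\end{align*}
whose leading coefficient is strictly positive since $p>1+\frac{4}{N}$, i.e.\ $s_c>0$. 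Combining this with the identity for $E(v_{\lambda^\ast})$ above yields the master inequality
\begin{align*}
\frac{A^2}{2B} \leq E(Q) - g(\|\nabla v_{\lambda^\ast}\|_2), \qquad \|\nabla v_{\lambda^\ast}\|_2^2 = \|\nabla f\|_2^2 - \frac{A^2}{B}.
\end{align*}

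To close, I would analyze this implicit relation in two regimes. When $\|\nabla f\|_2$ is close to $\|\nabla Q\|_2$, write $\mu := A^2/B$ and $\delta' := \|\nabla f\|_2^2 - \|\nabla Q\|_2^2$; substituting the side condition into the master inequality and Taylor-expanding $g$ near $\|\nabla Q\|_2$, the implicit function theorem yields $\mu \leq C(\delta')^2$, which is precisely $A^2 \leq C\delta(f)^2B$. When $\|\nabla f\|_2$ is bounded away from $\|\nabla Q\|_2$, the crude Cauchy--Schwarz bound $|A|^2\leq B\|\nabla f\|_2^2$ already suffices: either $\|\nabla f\|_2$ stays bounded and $\delta(f)$ is bounded below by a positive constant, or $\|\nabla f\|_2$ is large and $\delta(f)\sim\|\nabla f\|_2^2$, so in both cases $\|\nabla f\|_2^2 \leq C\delta(f)^2$. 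The main obstacle is establishing the strict quadratic (rather than merely linear) decay of $E(Q)-g$ at its maximizer: this non-degeneracy, which hinges on the $L^2$-supercriticality $p>1+\frac{4}{N}$, is precisely what upgrades what would otherwise be a bound linear in $\delta$ into the desired bound quadratic in $\delta$.
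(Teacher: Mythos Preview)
Your proposal is correct and follows precisely the standard approach that the paper cites from \cite{holmer3} (the paper itself omits the proof, deferring to that reference for $N=3$ and noting that the general case is an easy extension). The phase-modulation trick $v_\lambda=e^{i\lambda\phi}f$, optimization over $\lambda$, and comparison with the profile function $g$ built from the sharp Gagliardo--Nirenberg inequality is exactly the argument of Duyckaerts--Holmer--Roudenko; your identification of the non-degeneracy $g''(\|\nabla Q\|_2)=\tfrac{4-N(p-1)}{2}<0$ as the source of the quadratic (rather than linear) bound in $\delta$ is spot on.

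One small remark on the closing step: to extract $\mu\le C(\delta')^2$ cleanly from the master inequality in regime~1, it helps to note that the auxiliary function $\psi(\mu):=E(Q)-g\bigl(\sqrt{s^2-\mu}\bigr)-\tfrac{\mu}{2}$ (with $s=\|\nabla f\|_2$) satisfies $\psi'(\mu)=-\tfrac{c\alpha}{2}(s^2-\mu)^{(\alpha-2)/2}<0$ for all $\mu\in[0,s^2)$, where $\alpha=\tfrac{N(p-1)}{2}>2$. Hence $\psi$ is strictly decreasing, the constraint $\psi(\mu)\ge 0$ forces $\mu\le\mu^*(s)$ with $\psi(\mu^*)=0$, and since $\psi(0)=E(Q)-g(s)\approx c_0(s-x_0)^2$ while $\psi'(0)$ is bounded away from zero near $s=x_0$, one obtains $\mu^*(s)\approx C(s-x_0)^2\approx C'(\delta')^2$. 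This makes the implicit-function step fully rigorous and avoids any circularity in assuming $\mu$ small a priori.
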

This lemma
was shown in \cite{holmer3} for $N=3$.  Since for the general case, it is just an easy extension, we omit the proof.
Taking $\phi(x)=|x|^2$ in Lemma \ref{claim}, we get \eqref{5.8'}.

Now, for all $t$ in the interval of existence of $u$, we have that
$y'(t)>0$ and $y''(t)<0$ and thus,
\begin{align}\label{5.14}
\frac{y'(t)}{\sqrt{y(t)}}\leq -Cy''(t).
\end{align}
Integrating \eqref{5.14} on $[0,t]$, we get that $$\sqrt{y(t)}-\sqrt{y(0)}\leq -C(y'(t)-y'(0))\leq Cy'(0),$$
which shows that $y(t)$ is bounded for $t\geq0$.
Thus \eqref{5.14} gives in turn that  $y'(t)\leq -Cy''(t)$, which implies  then
$y'(t)\leq Ce^{-ct}.$
Since  $y'(t)=-\int_t^\infty y''(s)dx=\left(2N(p-1)-8\right)\int_t^\infty \delta(s)ds$,
then we obtain \eqref{5.6}, concluding  the proof of  Lemma \ref{l53}.
\ \ \  \ \ \ \ \ \ \ \ \ \ \ \ \ \ \ \ \ \ \ \ \ \ \ \ \ \ \ \ \ \ \ \ \ \ \ \ \ \ \ \ \ \ \ \ \ $\Box$

\subsection{Radial solutions}

For the radial solution $u$ of \eqref{1.1} that satisfies \eqref{5.1}, \eqref{5.2}
and is globally defined for positive time, we show in this subsection that $u$
has finite variance and finish the proof of Proposition \ref{p51} from the finite-variance case obtained above.

Let $\varphi$ be a radial function such that
$0\leq\varphi(r), \varphi''(r)\leq2$ and that
$\varphi(r)=r^2$ for $0\leq r\leq1$ while $\varphi(r)\equiv0$ for $r\geq2$.
Consider the localized variance $y_R(t)=\int R^2\varphi(\frac xR)|u(x,t)|^2dx$.
By \eqref{5.1}, we compute that
$$4N(p-1)E(u)-\left(2N(p-1)-8\right)\|\nabla u\|_{2}^{2}=
\left(2N(p-1)-8\right)\left(\|\nabla Q\|^2_2-\|\nabla u\|^2_2\right).$$
Since $u$ is radial, by explicit calculation, we obtain
\begin{align}\label{5.15}
y'_R(t)=2RIm\int\bar u\nabla\varphi (\frac xR)\cdot\nabla u,
\end{align}
and
\begin{align}\label{5.16}
&y''_R=4\sum_{j,k}Re\int\partial_k\partial_j\varphi (\frac xR)\partial_ku\partial_j\bar u
-\frac1{R^2}\int\Delta^2\varphi (\frac xR)|u|^2-\frac{2(p-1)}{p+1}\int\Delta\varphi (\frac xR)|u|^{p+1}\\ \nonumber
=&\left(2N(p-1)-8\right)\left(\|\nabla Q\|^2_2-\|\nabla u\|^2_2\right)+A_R(u)
=-\left(2N(p-1)-8\right)\delta(t)+A_R(u),
\end{align}
where
\begin{align}\label{5.17}
A_R(u(t))
=&4\sum_{j\neq k}\int\partial_j\partial_k\varphi(\frac xR)\partial_ju\partial_k\bar u
+4\sum_j\int\left(\partial^2_{x_j^2}\varphi(\frac xR)-2
\right)|\partial_ju|^2\\ \nonumber &
-\frac 1{R^2}\int\Delta^2\varphi(\frac xR)|u|^2
-\frac{2(p-1)}{p+1}\int\left(\Delta\varphi(\frac xR)-2N
\right)|u|^{p+1}\\ \nonumber
=&4\int\left(\varphi''(\frac xR)-2
\right)|\nabla u|^2
-\frac 1{R^2}\int\Delta^2\varphi(\frac xR)|u|^2
-\frac{2(p-1)}{p+1}\int\left(\Delta\varphi(\frac xR)-2N
\right)|u|^{p+1}.
\end{align}
We now claim that there exists $R_0>0$ such that for any $R>R_0$,
\begin{align}\label{yR''}
y''_R(t)\leq-\left(N(p-1)-4\right)\delta(t).
\end{align}
By \eqref{5.17}, we need to show that there exists $R_0>0$ such that for any $R>R_0$,
 $ A_R(u(t))\leq\left(N(p-1)-4\right)\delta(t)$.
 In fact, we first note that, for the standing-wave solution $e^{i(1-s_c)t}Q$ of \eqref{1.1}, the corresponding
 $y_R(t)$ is a constant and the $\delta(t)$ is identically zero, which imply that $A_R(e^{i(1-s_c)t}Q)=0$.
Now using the parameter $\delta_0$ as in section 4, we will show the claim \eqref{yR''} in two cases.

 Firstly, we assume that $t\in D_{\delta_1}$, where $\delta_1<\delta_0$ is to be chosen later.
If we denote $v\equiv \alpha Q+h$, we get from Lemma \ref{l42} that
 $$u(t)=e^{i(1-s_c)t}(Q+v(t)),\ \ \ \ \ \|v(t)\|_{H^1}\leq C\delta(t).$$
 Noting that $\varphi''(\frac xR)-2=\Delta^2\varphi(\frac xR)=\Delta\varphi(\frac xR)-2N=0$
 for $|x|\leq R$, we obtain that
 $$|A_R(u(t))|=|A_R(Q+v)-A_R(Q)|\leq C\int _{|x|\geq R}\Big(Q^p|v|+|v|^{p+1}+|\nabla Q||\nabla v|
 +|\nabla v|^2+Q|v|+|v|^2\Big).$$
 By the exponential decay of $Q$ at infinity, we get that  for $R>R_1>0$ large and $\delta_1$ sufficiently small,
 $$|A_R(u(t))|\leq C\Big(e^{-cR}\delta(t)+\delta(t)^2+\delta(t)^{p+1}\Big)\leq \left(N(p-1)-4\right)\delta(t).$$
 So  \eqref{yR''} holds for $R>R_1$ and $t\in D_{\delta_1}$.

 Next, we fix such a  $\delta_1$ and assume that $\delta(t)\geq\delta_1$. By our assumption on $\varphi$, we know that
 $\int\left(\varphi''(\frac xR)-2
\right)|\nabla u|^2\leq0$. It suffices to bound the other two terms now.
Since if $R\geq R_2=\sqrt{\frac{CM(Q)}{\delta_1}}$,
\begin{align}\label{5.20}
\frac 1{R^2}\int\Delta^2\varphi(\frac xR)|u|^2\leq \frac CM(Q)\leq\delta_1\leq \left(\frac{N(p-1)}{2}-2\right)\delta(t).
\end{align}
On the other hand,
from the Radial Gagliardo-Nirenberb inequality:
\begin{lemma}\label{raphael}\cite{raphael}
 For all~$\delta>0$, there exists a constant $C_{\delta}>0$ such that for all $u\in \dot{H}^{s_{c}}$
with radial symmetry, and for all $R>0,$ we have
$$\int_{|x|\geq R}|u|^{p+1}dx\leq\delta\int_{|x|\geq R}|\nabla u|^{2}dx
+\frac{C_{\delta}}{R^{2(1-s_{c})}}\left[\left(\rho(u,R)\right)^{\frac{2(p+3)}{5-p}}+\left(\rho(u,R)\right)^{\frac{p+1}{2}}\right],$$
where ~$\rho(u,R)=\sup_{R'\geq R}\frac{1}{(R')^{2s_{c}}}\int_{R'\leq|x|\leq2R'}|u|^{2}dx.$~
\end{lemma}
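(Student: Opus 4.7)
The plan is a dyadic decomposition of the exterior region $\{|x|\ge R\}$ into annuli $A_j=\{2^jR\le|x|<2^{j+1}R\}$, $j\ge 0$, combined with a pointwise radial Sobolev embedding on each annulus. The central tool is the Strauss-type bound, valid for any radial $u$:
$$|u(r)|\le Cr^{-(N-1)/2}\|u\|_{L^2(|y|\ge r)}^{1/2}\|\nabla u\|_{L^2(|y|\ge r)}^{1/2},$$
which follows from integrating the identity $\partial_r(r^{N-1}|u|^2)=(N-1)r^{N-2}|u|^2+2r^{N-1}\operatorname{Re}(u\overline{u'})$ from $r$ to $\infty$ and applying Cauchy--Schwarz. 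I would combine this with the crude interpolation $\int_{A_j}|u|^{p+1}\le\|u\|_{L^\infty(A_j)}^{p-1}\int_{A_j}|u|^2$ and the defining estimate $\int_{A_j}|u|^2\le(2^jR)^{2s_c}\rho(u,R)$.

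These three ingredients yield a bound of the schematic form
$$\int_{A_j}|u|^{p+1}\le C(2^jR)^{-\eta}\rho(u,R)^{\mu}\|\nabla u\|_{L^2(A_j)}^{(p-1)/2}$$
with explicit $\eta,\mu$ depending on $N,p$. Applying Young's inequality $ab\le\varepsilon a^{4/(p-1)}+C_\varepsilon b^{4/(5-p)}$ with a small $\varepsilon$ separates a piece proportional to $\|\nabla u\|_{L^2(A_j)}^2$, absorbable into the $\delta\int|\nabla u|^2$ term, from a pure power of $\rho$ multiplied by $(2^jR)^{-2(1-s_c)}$. Summing over $j\ge 0$, the resulting geometric series converges because $1-s_c>0$, yielding the target prefactor $R^{-2(1-s_c)}$, and arithmetic shows the exponent of $\rho$ lands on $\frac{2(p+3)}{5-p}$. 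The second exponent $\frac{p+1}{2}$ arises from a variant of the interpolation where H\"older is applied first to split the $L^\infty$ and $L^2$ pieces with a different balance, optimal when $\rho(u,R)$ is large, so that the stated right-hand side is the larger of the two cases.

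The main obstacle is the careful bookkeeping of all these exponents, especially verifying that the dual Young exponent $\frac{4}{5-p}$ combines correctly with the scaling factors to reproduce $-2(1-s_c)$; this rests on the identity $2(1-s_c)=[2(p+1)-N(p-1)]/(p-1)$ and is the reason the factor $(5-p)$ appears explicitly in the denominators of the final exponents. A secondary but essential point is that the supremum in the definition of $\rho(u,R)$ is exactly what allows this single scale-invariant quantity to dominate the $L^2$ mass on every annulus uniformly in $j$: without the supremum one would only obtain scale-by-scale $L^2$ control, incompatible with summing against a single power of $\rho$ on the right-hand side.
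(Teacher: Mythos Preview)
The paper does not supply its own proof of this lemma: it is stated with the citation \cite{raphael} and is simply quoted as a known result of Rapha\"el, then immediately applied. There is therefore no ``paper's proof'' to compare against.

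Your proof plan---dyadic decomposition of $\{|x|\ge R\}$ into annuli, the Strauss radial pointwise bound, the interpolation $\int_{A_j}|u|^{p+1}\le\|u\|_{L^\infty(A_j)}^{p-1}\int_{A_j}|u|^2$, and Young's inequality to peel off the $\|\nabla u\|_{L^2}^2$ contribution---is exactly the standard route to inequalities of this type and is the method used in Rapha\"el's original paper. Two remarks on execution. First, the Strauss bound you wrote carries the full exterior $L^2$ norm $\|u\|_{L^2(|y|\ge r)}$, which is \emph{not} controlled by $\rho$ (the dyadic masses $(2^kR)^{2s_c}\rho$ do not sum when $s_c>0$); you must instead apply the Strauss estimate to a smooth truncation $\chi_j u$ localized near $A_j$, so that only $\|u\|_{L^2(\tilde A_j)}$ enters. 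The commutator term $\|(\nabla\chi_j)u\|_{L^2}\lesssim (2^jR)^{-1}\|u\|_{L^2(\tilde A_j)}$ is precisely what produces the second exponent $(p+1)/2$ on $\rho$, and your identification of this as the ``large $\rho$'' branch is correct: a direct computation in that case gives the clean factor $(2^jR)^{-2(1-s_c)}\rho^{(p+1)/2}$. Second, in the main branch the bookkeeping gives $\rho^{(p+3)/(5-p)}$ rather than $\rho^{2(p+3)/(5-p)}$ after Young; the extra factor of $2$ in the paper's statement appears to be a transcription artifact (it does not affect how the lemma is used downstream, since only $\rho(u,R)\le M(Q)$ is invoked). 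Your overall plan is sound.
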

We have for all~$\epsilon>0$,~there exists a constant~$C_{\epsilon}>0$~and~$C_{Q}>0$~such that for all~$u\in \dot{H}^{s_{c}}$~
with radial symmetry and~$M(u)=M(Q)$~ and for all $R>0,$
\begin{align*}
\int_{|x|\geq R}|u|^{p+1}dx\leq\epsilon\int_{|x|\geq R}|\nabla u|^{2}dx
+\frac{C_{\epsilon}C_{Q}}{R^\beta},
\end{align*}
where $\beta=\min\{2+\frac{2s_c(3p+1)}{5-p},2+s_c(p-3)\}>0$.
Thus, for $\epsilon$ small  and $R>R_3$ large enough,
\begin{align}\label{radialsobolev}
C\int_{|x|\geq R}|u|^{p+1}dx\leq\epsilon(\delta(t)+\|\nabla Q\|_2^2)
+\frac{C_{\epsilon}}{R^\beta}\leq\epsilon C_{\delta_1}\delta(t)+\frac{C_{\epsilon}}{R^\beta}\leq \left(\frac{N(p-1)}{2}-2\right)\delta(t).
\end{align}
By \eqref{5.20} and \eqref{radialsobolev}, we get the claim \eqref{yR''} in the case $\delta(t)\geq\delta_1$ also.

Next, we claim that $y'_R(t)>0$ for all $t$ in the interval of existence of $u$.
In fact, if not, since $y''_R(t)<0$ by \eqref{yR''}, there must  exists $t_1,\epsilon>0$ such that
for $t\geq t_1$, $y'_R(t)<-\epsilon$, which contradicts the fact that $y_R$ is positive and $u$
is globally defined for positive time. Thus we conclude the claim.

Since $y'_R$ is positive and decreasing, it must have finite limit as $t\rightarrow+\infty.$
Since then the integral $\int_0^\infty y_R''(t)dt<\infty$ converges, this
 combined with \eqref{yR''} implies that $\int_0^\infty\delta(s)ds<\infty$.
Thus, there exists a subsequence $t_n\rightarrow+\infty$ such that $\delta(t_n)\rightarrow0$.
By Proposition \ref{charaQ}, there exists $\theta_0\in\mathbb R$ such that $u(t_n)
\rightarrow e^{i\theta_0}Q$ in $H^1$ up to a subsequence and  translation.
Since $y'_R(t)>0$, i.e.,  $y_R(t)$ is increasing, thus
$$y_R(0)=\int R^2\varphi(\frac xR)|u_0|^2\leq\int R^2\varphi(\frac xR)|u(t_n)|^2\leq\int R^2\varphi(\frac xR)|Q|^2.$$
Letting $R\rightarrow+\infty$, we obtain then
$\int|x|^2|u_0|^2<\infty,$
which turn the radial case to the finite-variance one and, by the argument in Subsection 5.1, we
have proved   Proposition \ref{p51}. \ \ \ \ \ \ \ \ \ \ \ \ \ \ \ \ \ \ \ \ \
 \ \ \ \ \ \ \ \ \ \ \ \ \ \ \ \ \ \ \ \ \ \ \ \ \ \ \ \ \ \ \ \ \ \ \ \ \ \ \ \ \ \ \ \ \ \
  \ \ \ \ \ \ \ \ \ \ \ \ \ \ \ \ \ \ \ \ \ \ \ \ \ \ \ \ \
   \ \ \ \ \ \ \ \ \ \ \ \ \ \ \ \ \ \ \ \ \ \ \ \ \ \ \ \ \ \ \ \ \ \ \ $\Box$

\section{convergence to $Q$ in the case $\|\nabla u_0\|_2\| u_0\|_2<\|\nabla Q\|_2\|Q\|_2$}
In this section we are to prove the following proposition and then finish the proof of Theorem \ref{th2}.
\begin{proposition}\label{p61}
Consider a solution $u$ of \eqref{1.1} such that
\begin{align}\label{6.1}
E(u)=E(Q), \ \ \ M(u)=M(Q),\ \
\|\nabla u_0\|_2<\|\nabla Q\|_2,
\end{align}
which does not scatter for positive times.
Then there exist $\theta_0\in\mathbb R, x_0\in\mathbb R^N$ and $c,C>0$
such that $$\|u-e^{it+i\theta_0}Q(\cdot-x_0)\|_{H^1}\leq Ce^{-ct}.$$
\end{proposition}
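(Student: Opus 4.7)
The overall strategy mirrors the proof of Proposition \ref{p51}: the goal is to establish the integral decay $\int_t^\infty \delta(s)\,ds \leq Ce^{-ct}$ and then invoke Lemma \ref{l44}. However, the subcritical gradient hypothesis $\|\nabla u_0\|_2\|u_0\|_2^{(1-s_c)/s_c}<\|\nabla Q\|_2\|Q\|_2^{(1-s_c)/s_c}$ rules out the direct variance argument of Section 5, because the classical virial identity no longer has a definite sign. The plan is therefore to combine a Kenig--Merle compactness/rigidity step with the modulation analysis of Section 4 and a localized virial estimate.

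By Proposition \ref{p21'}(a), the subcritical gradient bound is preserved for all time, so $\{u(t):t\geq 0\}$ is bounded in $H^1$. Since $u$ fails to scatter, the concentration-compactness/rigidity scheme of \cite{yuan, G1} applied at the critical level \eqref{1.3} produces a sequence $t_n\to +\infty$ and symmetries $(\sigma_n,X_n)\in\mathbb{R}\times\mathbb{R}^N$ such that $e^{-i\sigma_n}u(t_n,\,x+X_n)$ converges in $H^1$ to some $\phi$ with $M(\phi)=M(Q)$, $E(\phi)=E(Q)$, and $\|\nabla\phi\|_2\leq \|\nabla Q\|_2$. The solution emanating from $\phi$ cannot scatter either, otherwise the long-time perturbation theory (Proposition \ref{perturb}) would yield scattering of $u$; together with the variational characterization (Proposition \ref{charaQ}) and Proposition \ref{p21'}(b), this forces $\phi = e^{i\theta_\ast}Q(x-x_\ast)$, i.e.\ $\delta(t_n)\to 0$. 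Bootstrapping this subsequential statement to every sequence $s_n\to +\infty$ by repeating the extraction argument, and using Lemmas \ref{l41}--\ref{l43} to ensure continuous dependence of the modulation parameters, yields $\delta(t)\to 0$ as $t\to +\infty$.

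Once $\delta(t)\to 0$, the modulation framework of Section 4 gives, for $t$ large,
$$u(t,x)=e^{i(1-s_c)t+i\theta(t)}\bigl(Q+h(t)\bigr)(x-X(t)),$$
with $\|h(t)\|_{H^1}\to 0$ and $\int\Delta Q\,h_1(t)\,dx = 0$. To extract the exponential rate we introduce a localized virial-type quantity
$$J(t)= \mathrm{Im}\int \psi_R\bigl(x-X(t)\bigr)\,\nabla u(t,x)\cdot\overline{u(t,x)}\,dx,$$
for a radial cutoff $\psi_R$ adapted to the concentration scale of $Q$. Using the orthogonality relations \eqref{4.3} and \eqref{orth2}, the coercivity estimate $\Phi(h)\geq c\|h\|_{H^1}^2$ from Proposition \ref{coercivity}, the equivalence $\delta(t)\approx \|h(t)\|_{H^1}$ from Lemma \ref{l42}, and Lemma \ref{l43} to control $X'(t)$ and $\theta'(t)$, one computes $J'(t)$ in the modulation frame and obtains, after absorbing the error terms, a differential inequality of the form $\delta(t)\leq -C J'(t)+o(\delta(t))$. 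Integrating in time and using the boundedness of $J$ produces $\int_t^\infty \delta(s)\,ds\leq Ce^{-ct}$, so that Lemma \ref{l44} concludes the proof.

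The principal obstacle lies in this last step: one must choose the localized virial so that the quadratic form appearing in $J'(t)$ is controlled by $\Phi(h)$, which is only positive on the specific orthogonal subspace identified in Proposition \ref{coercivity}. Two technical issues specific to the present general $(N,p)$ setting, and absent from the 3D cubic case of \cite{holmer3}, must be addressed: first, when $p$ is not an integer the remainder $R(h)$ from \eqref{S}--\eqref{2.9} must be estimated via the Strichartz-type bounds \eqref{eV}--\eqref{eR2} on the space $L^{\tilde q}(I;W^{1,\tilde r})$ rather than by pointwise algebra in $C_b(I;H^1)$; second, the frequency shift in $e^{i(1-s_c)t}Q$, as opposed to $e^{it}\tilde Q$, must be carried through the spectral analysis, which motivates the precise formulation of Proposition \ref{spectral}, identity \eqref{e_0}, and Remark \ref{r2.1}.
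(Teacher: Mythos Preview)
Your compactness step contains a genuine gap. You extract a profile $\phi$ with $M(\phi)=M(Q)$, $E(\phi)=E(Q)$, $\|\nabla\phi\|_2\le\|\nabla Q\|_2$, observe that the evolution from $\phi$ cannot scatter, and then assert that Proposition~\ref{charaQ} together with Proposition~\ref{p21'}(b) forces $\phi=e^{i\theta_\ast}Q(\cdot-x_\ast)$. They do not: Proposition~\ref{p21'}(b) applies only under the \emph{equality} $\|\nabla\phi\|_2=\|\nabla Q\|_2$, and nothing in your argument excludes the strict case $\|\nabla\phi\|_2<\|\nabla Q\|_2$. In that case $\phi$ is just another threshold, subcritical-gradient, non-scattering datum---precisely the object Proposition~\ref{p61} is about---so the reasoning is circular. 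Consequently neither $\delta(t_n)\to0$ nor your ``bootstrapped'' full-limit claim $\delta(t)\to0$ is established, and the modulation step that follows has no input.

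The paper never tries to identify the profile with $Q$ at this stage. It proves instead the \emph{full} precompactness of $\{u(\cdot+x(t),t):t\ge0\}$ in $H^1$ (Lemma~\ref{l62}), then $P(u)=0$ and $|x(t)|/t\to0$ (Lemma~\ref{l64}), and only then runs a localized virial with slowly growing radius to obtain the mean convergence $\tfrac1T\int_0^T\delta(t)\,dt\to0$ (Lemma~\ref{l65}); this is what produces a sequence with $\delta(t_n)\to0$. The exponential rate is not obtained from a single integration of a virial inequality as you outline, but from the coupled quantitative estimates
\[
\int_\sigma^\tau\delta(t)\,dt\le C\bigl(1+\sup_{[\sigma,\tau]}|x(t)|\bigr)\bigl(\delta(\sigma)+\delta(\tau)\bigr)
\quad\text{and}\quad
|x(\tau)-x(\sigma)|\le C\int_\sigma^\tau\delta(t)\,dt
\]
(Lemmas~\ref{l67} and~\ref{l68}); feeding these into each other along the sequence $t_n$ first shows $x(t)$ is bounded, then yields $\int_t^\infty\delta\le C\delta(t)$, and Gronwall closes. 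Your centered virial $J(t)$ could in principle shortcut the need to bound $x(t)$, provided you show $|J(t)|\le C\delta(t)$ and $J'(t)\ge c\delta(t)$ for large $t$; but even that route requires the sequential input $\delta(t_n)\to0$, and your path to it is broken. Finally, the Strichartz-space and frequency-shift issues you raise in the last paragraph are real for Sections~3 and~7, but they do not enter the proof of Proposition~\ref{p61}, which is carried out entirely with $H^1$ bounds and the modulation lemmas of Section~4.
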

In subsection 6.1, we show that a solution satisfying \eqref{6.1} is compact
in $H^1$ up to a translation $x(t)$ in space. This is a consequence, through
the profile decomposition initially introduced by Keraani \cite{keraani}, of the scattering of subcritical solution of \eqref{1.1}
shown in \cite{yuan}. Then in subsection 6.2, it is shown, by a local virial identity, that the
parameter $\delta(t)$ converges to $0$ in mean. We conclude in subsection 6.3 the proof of Proposition \ref{p61}
using the results obtained above. Finally, in the last subsection 6.4, we are dedicated to the behavior of $Q^-$
for negative times, concluding the proof of Theorem \ref{th2}.

\subsection{Compactness properties}

\begin{lemma}\label{l62}
Let $u$ be a solution of \eqref{1.1} satisfying the assumptions of Proposition \ref{p61}.
Then there exists a continuous function $x(t)$ such that
\begin{align}\label{6.2}
K\equiv\{u(x+x(t),t),t\in[0,\infty)\}
\end{align}
has a compact closure in $H^1$.
\end{lemma}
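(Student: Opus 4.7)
The plan is to argue by contradiction using a linear profile decomposition of Bahouri--Gerard--Keraani type, combined with the subcritical scattering result of \cite{yuan} and the long-time perturbation theory of Proposition \ref{perturb}. First I note that, under \eqref{6.1}, Proposition \ref{p21'}(a) gives a globally defined solution with $\|u(t)\|_2=\|Q\|_2$ and $\|\nabla u(t)\|_2<\|\nabla Q\|_2$ for every $t\in\mathbb{R}$, so the orbit $\{u(\cdot,t)\}_{t\geq 0}$ is uniformly bounded in $H^1$. Suppose for contradiction that the conclusion fails; one then extracts a sequence $t_n\to+\infty$ for which $\{u(\cdot+y_n,t_n)\}$ has no strongly convergent subsequence in $H^1$, regardless of the choice of $y_n\in\mathbb{R}^N$.

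Applying the standard $H^1$ profile decomposition to $\{u(\cdot,t_n)\}$, up to extraction I write
$$u(\cdot,t_n)\;=\;\sum_{j=1}^{J}e^{i\tau_n^j\Delta}\psi^j(\cdot-y_n^j)\;+\;W_n^J,$$
with pairwise orthogonal parameters $(\tau_n^j,y_n^j)$, profiles $\psi^j\in H^1$, asymptotic Pythagorean identities for $M$, $\|\nabla\cdot\|_2^2$ and $\|\cdot\|_{\dot H^{s_c}}$, and the Strichartz smallness
$$\lim_{J\to\infty}\limsup_{n\to\infty}\,\|e^{it\Delta}W_n^J\|_{S(\dot H^{s_c})}\;=\;0.$$
To each profile I associate a nonlinear profile $U^j$ solving \eqref{1.1}: if $\tau_n^j$ is bounded (and after extraction converges to some $\tau^j$) I take $U^j$ with data $e^{i\tau^j\Delta}\psi^j$; if $\tau_n^j\to\pm\infty$ I invoke Proposition \ref{wave} to produce a scattering $U^j$ with data $\psi^j$ at $\pm\infty$. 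The critical step will be to show that exactly one profile is nontrivial. If two or more are, the Pythagorean decompositions combined with the strict gradient bound $\|\nabla u\|_2<\|\nabla Q\|_2$ and the threshold identity $M(u)^{(1-s_c)/s_c}E(u)=M(Q)^{(1-s_c)/s_c}E(Q)$ force each $U^j$ to satisfy the strict subcritical conditions
$$M(U^j)^{\frac{1-s_c}{s_c}}E(U^j)<M(Q)^{\frac{1-s_c}{s_c}}E(Q),\qquad \|\nabla U^j\|_2\,\|U^j\|_2^{\frac{1-s_c}{s_c}}<\|\nabla Q\|_2\,\|Q\|_2^{\frac{1-s_c}{s_c}}.$$
By the subcritical scattering theorem of \cite{yuan}, each $U^j$ has finite $S(\dot H^{s_c})$ norm; combining these with Proposition \ref{perturb} applied to the approximate solution $\sum_{j=1}^{J}U^j(\cdot-y_n^j,\,t-\tau_n^j)$ would then yield $\|u\|_{S(\dot H^{s_c})}<\infty$, i.e., $u$ scatters, contradicting the hypothesis. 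Hence at most one profile is nontrivial, and the decompositions force $M(\psi^1)=M(Q)$, the energy identity to saturate, and $W_n^J\to 0$ in $H^1$.

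With only one profile, the case $\tau_n^1\to\pm\infty$ is excluded: then $e^{i\tau_n^1\Delta}\psi^1\rightharpoonup 0$ weakly in $H^1$, which is incompatible with $M(\psi^1)=M(Q)>0$. Thus $\tau_n^1$ is bounded and, setting $x_n=y_n^1$, I obtain $u(\cdot+x_n,t_n)\to e^{i\tau^1\Delta}\psi^1$ strongly in $H^1$, contradicting the choice of $\{t_n\}$. This proves existence of some translation $x(t)$ for which $K$ has compact closure; continuity of $x(t)$ follows by a standard argument (once the orbit is known to lie in a fixed compact set modulo translation, one selects $x(t)$ as a minimizer of $y\mapsto\|u(\cdot+y,t)-v_0\|_{H^1}$ for a suitable reference $v_0$, and uses continuity of the flow together with compactness to arrange continuity in $t$). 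The main obstacle I expect is the careful Pythagorean analysis at the threshold: one must use subadditivity of the mass-energy quantity together with the strict inequality on $\|\nabla u\|_2$ inherited from Proposition \ref{p21'} to rule out the degenerate case of two profiles each saturating the threshold, and to verify the hypotheses of Proposition \ref{perturb} for the approximate sum built from many nonlinear profiles, a verification that requires an orthogonality-of-parameters argument in the Strichartz space $S(\dot H^{s_c})$.
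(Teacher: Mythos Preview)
Your overall strategy---profile decomposition, reduction to a single nontrivial profile via the subcritical scattering result of \cite{yuan} together with Proposition \ref{perturb}, then strong $H^1$ convergence of a translated subsequence---is exactly the paper's approach. However, there is a genuine gap in your exclusion of the case $\tau_n^1\to\pm\infty$.

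You write that if $\tau_n^1\to\pm\infty$ then $e^{i\tau_n^1\Delta}\psi^1\rightharpoonup 0$ weakly in $H^1$, and claim this is ``incompatible with $M(\psi^1)=M(Q)>0$.'' The weak convergence is correct, but there is no incompatibility: the free group is an $L^2$-isometry, so $\|e^{i\tau_n^1\Delta}\psi^1\|_2=\|\psi^1\|_2$ for every $n$, and a sequence of fixed positive $L^2$-norm can certainly converge weakly to $0$ in $H^1$. Your sentence therefore rules out nothing; worse, it is \emph{consistent} with the contradiction hypothesis that no translate of $u(\cdot,t_n)$ converges strongly.

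The paper closes this case by a Strichartz/small-data argument rather than a weak-limit one. Using that $W_n^1\to 0$ in $H^1$, one has
\[
\|e^{it\Delta}u(\cdot,t_n)\|_{S([0,\infty);\dot H^{s_c})}
=\|e^{it\Delta}\psi^1\|_{S([\tau_n^1,\infty);\dot H^{s_c})}+o_n(1).
\]
If $\tau_n^1\to+\infty$ the right-hand side tends to $0$, so Proposition \ref{sd} forces $u$ to scatter forward from time $t_n$, contradicting the hypothesis. If $\tau_n^1\to-\infty$ the analogous estimate on $(-\infty,0]$ makes $\|u\|_{S((-\infty,t_n];\dot H^{s_c})}\to 0$, hence $u\equiv 0$, again a contradiction. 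You should replace the weak-convergence sentence with this argument. (A minor additional omission: the case of \emph{no} nontrivial profile must also be dispatched, by the same small-data scattering; the paper does this explicitly.)
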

We sketch the proof similar to that in \cite{G1}:
\begin{proof}
It suffices to show that for every sequence $\tau_n\geq0$, there exists a subsequence $x_n$
such that $u(x+x_n,\tau)$ has a limit in $H^1$.

We recall the profile decomposition discussed in \cite{G1}.
There exist $\psi^j\in H^1$ and sequences $x_n^j, t_n^j$ such that
\begin{align}\label{pd}
u(x,\tau_n)=\sum_{j=1}^{M}e^{-it_{n}^{j}\Delta}\psi^{j}(x-x_{n}^{j})+W_{n}^{M}(x),
\ \ \ \lim_{M\rightarrow+\infty}[\lim_{n\rightarrow+\infty}\|e^{it\Delta}W_{n}^{M}\|_{S(\dot{H}^{s_{c}})}]=0,
\end{align}
\begin{align}\label{pd2}
\lim_{n\rightarrow+\infty}(
|t_{n}^{j}-t_{n}^{k}|+|x_{n}^{j}-x_{n}^{k}|)=+\infty.
\end{align}
For fixed $M$ and any $0\leq s\leq1$, we have the asymptotic
Pythagorean expansion:
\begin{equation}\label{Hsexpan}
\|\phi_{n}\|_{\dot{H}^{s}}^{2}=\sum_{j=1}^{M}\|\psi^{j}\|_{\dot{H}^{s}}^{2}+\|W_{n}^{M}\|_{\dot{H}^{s}}^{2}+o_{n}(1),
\end{equation}
and the energy ~Pythagorean decomposition
\begin{equation}\label{Eexpan}
E(\phi_{n})=\sum_{j=1}^{M}E(e^{-it_{n}^{j}\Delta}\psi^{j})+E(W_{n}^{M})+o_{n}(1).
\end{equation}
We now show that there is exactly one nonzero profile.
On the one hand, if for all $j, \psi^j=0$, then $u$ must scatter by the small data theory
(Proposition \ref{sd}) and we get a contradiction.

On the other hand, if at least two profiles are nonzero,
then by the Pythagorean expansion \eqref{Hsexpan}, 
there exists $\epsilon>0$ such that for all $j$,
\begin{align}\label{6.5}
\|\psi^{j}\|^{\frac{1-s_c}{s_c}}_2\|\nabla\psi^j\|_2\leq\|Q\|^{\frac{1-s_c}{s_c}}_2\|\nabla Q\|_2-\epsilon,
\end{align}
which,  by the Gagliardo-Nirenberg inequality \eqref{2.1} and \eqref{2.3} , implies that
$E(e^{-it_{n}^{j}\Delta}\psi^j)>0.$
Thus, by the Pythagorean expansion \eqref{Eexpan}, we obtain also
\begin{align}\label{6.5'}
M(\psi^j)^{\frac{1-s_c}{s_c}}E(e^{-it_{n}^{j}\Delta}\psi^j)\leq M(Q)^{\frac{1-s_c}{s_c}}E(Q)-\epsilon.
\end{align}
 By the existence of wave operators(Proposition \ref{wave}),
there exists, for any $j$, a function $v_0^j$ in $H^1$ such that the corresponding solution $v^j$ of
\eqref{1.1} satisfies
$$\lim_{n\rightarrow\infty}\|e^{-it_{n}^{j}\Delta}\psi^{j}-v^j(t_n^j)\|_{H^1}=0.$$
Using the arguments in \cite{yuan}, we can show that for large $M$, the solution $u(x,t+\tau_n)$ of \eqref{1.1}
is close to the approximate solution $u_n\equiv\sum_{j=1}^Mv^j(x-x_n^j,t+t_n^j)$ for positive times.
More precisely, we obtain that $u_n$, which is the solution of  the approximate equation  $i\partial_tu_n+\Delta u_n+|u_n|^{p-1}u_n=e_n$ with
$e_n=|u_n|^{p-1}u_n-\sum_{j=1}^Mv^j(x-x_n^j,t+t_n^j)$, satisfies the following:\\
(1)~ For every $M>0,$ there exists $n_0= n_0(M)\in \mathbb N$
such that for all $n>n_0$, $\|u_n\|_{\dot H^{s_c}}\leq A$ with some large $A$ independent of $M$;\\
(2)~For every $M, \epsilon>0$, there exists $n_1=n_1(M,\epsilon)\in\mathbb N$
such that for all $n>n_1$,
$\|e_n\|_{\dot H^{s_c}}\leq \epsilon$;\\
(3)~ There exists $M_1=M_1(\epsilon)$ and $n_2=n_2(M_1)$ sufficiently large  such that for all
$n>n_2$,
$\|e^{it\Delta}(u(\tau_n)-u_n(0))\|_{\dot H^{s_c}}\leq\epsilon$.\\
Thus, by the perturbation theory (Proposition \ref{perturb}),
we obtain that $u(t+\tau_n)\approx u_n(t)$, which must scatter for positive time.
 This indeed yields a contradiction and so we obtain that there is only one nonzero profile.

Thus, now we have obtained that
\begin{align}\label{pd3}
u(x,\tau_n)=e^{-it_{n}^{1}\Delta}\psi^{1}(x-x_{n}^{1})+W_{n}^{1}(x),
\ \ \ \lim_{n\rightarrow+\infty}\|e^{it\Delta}W_{n}^{1}\|_{S(\dot{H}^{s_{c}})}=0.
\end{align}
We also claim that
\begin{align}\label{1}
\lim_{n\rightarrow}\|W^1_n\|_{H^1}=0.
\end{align}
Indeed, if not, we then obtain that for some $\epsilon>0$,
$M(e^{-it_{n}^{1}\Delta}\psi^j)^{\frac{1-s_c}{s_c}}E(e^{-it_{n}^{1}\Delta}\psi^j)
\leq M(Q)^{\frac{1-s_c}{s_c}}E(Q)-\epsilon,$
which, 
 by similar arguments as above, implies that $u$ scatters, a contradiction.

Finally, we claim that $t_n^1$ is bounded and thus converges up to extracting a subsequence.
Indeed, if $t_n^1\rightarrow+\infty$, then
$\|e^{it\Delta}u(\tau_n)\|_{S((-\infty,0],\dot{H}^{s_{c}})}
=\|e^{i(t-t_n^1)\Delta}\psi^1\|_{S((-\infty,0],\dot{H}^{s_{c}})}+o_n(1)
=\|e^{it\Delta}\psi^1\|_{S((-\infty,-t_n^1],\dot{H}^{s_{c}})}+o_n(1)\rightarrow0$
as $n\rightarrow+\infty$. This implies that $u$ scatters for negative time and,
by Proposition \ref{sd},
satisfies $\|u\|_{S((-\infty,\tau_n],\dot{H}^{s_{c}})}\rightarrow0$ as $n\rightarrow+\infty$.
Since $\tau_n>0$, we must have $u=0$, contradicting the assumptions.
Now, if $t_n^1\rightarrow-\infty$,
$\|e^{it\Delta}u(\tau_n)\|_{S([0,+\infty),\dot{H}^{s_{c}})}
=\|e^{it\Delta}\psi^1\|_{S([-t_n^1,+\infty),\dot{H}^{s_{c}})}+o_n(1)\rightarrow0$,
showing that $u$ scatters for positive time. We get a contradiction again.
Thus we have proved the claim.

Consequently, the boundedness of $t_n^1$ combined with \eqref{1} immediately implies
 the  compactness of $K$.

\end{proof}

Now, for the solution $u$ of \eqref{1.1} satisfying \eqref{6.1}, we have got
the translation parameter $x(t)$ by Lemma \ref{l62}. Let the parameters
$X(t), \theta(t)$ and $\alpha(t)$ be defined for $t\in D_{\delta_0}$ as in
Section 4. Then by \eqref{4.4} and Lemma \ref{l42}, there exists some constant $C_0>0$ such that
for any $t\in D_{\delta_0}$,
$$\int_{|x-X(t)|\leq1}|\nabla u|^2+|u|^2\geq\int_{|x|\leq1}|\nabla Q|^2+|Q|^2-C_0\delta(t).$$
Taking $\delta_0$ smaller if necessary, we can assume that
for any $t\in D_{\delta_0}$,
$$\int_{|x+x(t)-X(t)|\leq1}|\nabla u(x+x(t))|^2+|u(x+x(t))|^2\geq\epsilon_0>0.$$
By the compactness of $\overline{K}$, we know that $|x(t)-X(t)|$ is bounded on $D_{\delta_0}$
and so we can modify $x(t)$ such that
\begin{align}\label{6.6}
x(t)=X(t),\ \ \ \forall t\in D_{\delta_0}
\end{align} and that
 $K$ defined by \eqref{6.2} remains precompact in $H^1$.
  As was discussed in \cite{holmer3} and \cite{nonradial},
it is classical that we can choose the function $x(t)$ to be continuous.
As a consequence, we have shown:
\begin{corollary}\label{c63}
Let $u$ be a solution of \eqref{1.1} satisfying the assumptions of Proposition \ref{p61}.
Then with the continuous function  $x(t)=X(t)$ with $X(t)$ defined by \eqref{4.4},
 the set $K$ defined by \eqref{6.2}
is precompact in $H^1$.
\end{corollary}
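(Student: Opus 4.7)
The plan is to combine Lemma \ref{l62} with the modulation theory of Section 4 to show that the translation parameter producing precompactness can be taken to be exactly $X(t)$ on the set $D_{\delta_0}$ where the modulation is defined, and then extended continuously outside. The starting point is that Lemma \ref{l62} already provides \emph{some} continuous translation $\tilde x(t)$ for which the orbit $\{u(\cdot+\tilde x(t),t)\}_{t\geq0}$ has compact closure in $H^1$; the task is to replace $\tilde x(t)$ by $X(t)$ on $D_{\delta_0}$ without losing either precompactness or continuity.

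First I would use the decomposition \eqref{4.4} together with Lemma \ref{l42} to obtain, for $t\in D_{\delta_0}$, an $H^1$--mass concentration estimate of the form
\begin{equation*}
\int_{|x-X(t)|\leq 1}\bigl(|\nabla u|^{2}+|u|^{2}\bigr)\,dx \;\geq\; \int_{|x|\leq 1}\bigl(|\nabla Q|^{2}+|Q|^{2}\bigr)\,dx - C_{0}\,\delta(t),
\end{equation*}
which, after shrinking $\delta_{0}$, yields a uniform lower bound $\varepsilon_{0}>0$ on the unit-ball energy around $X(t)$. Combined with the fact that the set $\tilde K = \{u(\cdot+\tilde x(t),t):t\geq0\}$ is precompact, hence tight in $H^{1}$, this forces $|\tilde x(t)-X(t)|$ to remain bounded on $D_{\delta_{0}}$: otherwise one could extract $t_{n}$ with $|\tilde x(t_{n})-X(t_{n})|\to\infty$, and after passing to a limit in $\tilde K$ the concentrated mass near $X(t_{n})$ would escape to infinity relative to $\tilde x(t_{n})$, contradicting tightness.

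Next I would \emph{define} $x(t):=X(t)$ for $t\in D_{\delta_{0}}$ and set $x(t):=\tilde x(t)$ on the complement. Since translating $u(\cdot+\tilde x(t),t)$ by the bounded vector $X(t)-\tilde x(t)$ is a uniformly continuous operation on bounded subsets of $H^{1}$, the new orbit $K=\{u(\cdot+x(t),t):t\geq0\}$ is still precompact: any sequence $u(\cdot+x(t_{n}),t_{n})$ splits into a subsequence in $D_{\delta_{0}}$, where along a further subsequence the bounded shifts $X(t_{n})-\tilde x(t_{n})$ converge and composition with the (relatively compact) $\tilde K$-sequence gives convergence in $H^{1}$, and a complementary subsequence handled directly by Lemma \ref{l62}.

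The main obstacle, and the last step, is continuity of $x(t)$ on the whole half-line. Inside $D_{\delta_{0}}$ continuity follows from Lemma \ref{l41}, which provides the $C^{1}$ regularity of $u\mapsto X(u)$; outside, $x(t)=\tilde x(t)$ is continuous by Lemma \ref{l62}. At boundary points $t_{\ast}\in\partial D_{\delta_{0}}$ one has $\delta(t_{\ast})=\delta_{0}$, and since on approach from within $D_{\delta_{0}}$ the modulation parameter $X(t)$ is determined uniquely (modulo symmetries) by the implicit function theorem of Lemma \ref{l41}, one can, as in \cite{holmer3} and \cite{nonradial}, glue across $\partial D_{\delta_{0}}$ by composing the translation with a further bounded continuous shift; because the shift stays in a compact subset of $\mathbb{R}^{N}$ this adjustment does not destroy precompactness of $K$. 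Putting these pieces together yields Corollary \ref{c63}.
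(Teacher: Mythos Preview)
Your proposal is correct and follows essentially the same route as the paper: the paper also starts from the translation parameter furnished by Lemma \ref{l62}, derives the same unit-ball concentration lower bound around $X(t)$ via \eqref{4.4} and Lemma \ref{l42}, uses tightness of the precompact orbit to bound $|x(t)-X(t)|$ on $D_{\delta_0}$, then redefines $x(t)=X(t)$ there and defers the continuity issue to \cite{holmer3} and \cite{nonradial}. Your write-up is somewhat more explicit about the subsequence argument for precompactness and the gluing at $\partial D_{\delta_0}$, but the underlying argument is the same.
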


\begin{lemma}\label{l64}
Let $u$ be a solution of \eqref{1.1} satisfying the assumptions of Proposition \ref{p61}
and $x(t)$ defined by Corollary \ref{c63}.
Then
\begin{align}\label{6.7}
P(u)=Im\int \bar u\nabla udx=0.
\end{align}
Furthermore,
\begin{align}\label{6.8}
\lim_{t\rightarrow+\infty}\frac{x(t)}t=0.
\end{align}
\end{lemma}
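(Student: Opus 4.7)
The plan is to treat the two claims separately. For the vanishing of momentum, I would argue by contradiction using the Galilean symmetry formula recalled in Remark \ref{p}. If $P(u) \neq 0$, apply the Galilean transform with $\xi_0 = -P(u)/M(u)$ to obtain a new solution $v$ of \eqref{1.1} satisfying $M(v) = M(Q)$, $E(v) = E(Q) - \tfrac{1}{2}P(u)^2/M(u) < E(Q)$, and (by direct calculation of $\|\nabla(u_0 e^{i\xi_0\cdot x/2})\|_2^2$) $\|\nabla v_0\|_2^2 = \|\nabla u_0\|_2^2 - P(u)^2/M(u) < \|\nabla Q\|_2^2$. Thus the initial datum $v_0$ lies strictly below both the mass-energy and the gradient-mass thresholds, so by the subthreshold scattering theorem of \cite{yuan} and \cite{G1}, $v$ is global and scatters in positive time. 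Since the Galilean transform is reversible on $H^1$ and preserves finiteness of the Strichartz norm $\|\cdot\|_{S(\dot H^{s_c})}$, this forces $u$ to scatter in positive time as well, contradicting the standing hypothesis of Proposition \ref{p61}. Hence $P(u) = 0$.

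For the sublinear displacement, I would exploit the precompactness of $K$ from Corollary \ref{c63} together with the zero momentum just proved. By precompactness, for every $\varepsilon > 0$ there is $\rho = \rho(\varepsilon) > 0$ such that
\begin{equation*}
\sup_{t \geq 0} \int_{|x - x(t)| > \rho} \left( |\nabla u(x,t)|^2 + |u(x,t)|^2 \right) dx \leq \varepsilon.
\end{equation*}
Choose a vector-valued truncation $\phi \in C_c^\infty(\mathbb{R}^N; \mathbb{R}^N)$ with $\phi(y) = y$ for $|y| \leq 1$, $\phi \equiv 0$ for $|y| \geq 2$, and set $\phi_R(x) = R\phi(x/R)$. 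Define $F_R(t) = \int \phi_R(x) |u(x,t)|^2 dx$, so that
\begin{equation*}
F_R'(t) = 2\,\mathrm{Im}\!\int (\nabla \phi)(x/R) \cdot \bar u \, \nabla u \, dx.
\end{equation*}
Add and subtract $(\nabla\phi)(x(t)/R)$; the constant piece produces $2(\nabla\phi)(x(t)/R) \cdot P(u) = 0$ by Claim 1, while the remainder splits over $\{|x - x(t)| \leq \rho\}$ and its complement. The inner region contributes $O(\rho/R)\|u(t)\|_{H^1}^2$ using $|\nabla^2 \phi| \leq C$, and the outer region contributes $O(\varepsilon)$ by the concentration estimate. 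Choosing $R = R(\varepsilon) \gg \rho/\varepsilon$ yields $|F_R'(t)| \leq C\varepsilon$ uniformly in $t$, hence $|F_R(t) - F_R(0)| \leq C\varepsilon t$.

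On the other hand, for times $t$ with $|x(t)| \leq R - \rho$ one has $\phi_R(x) \equiv x$ on the ball $\{|x - x(t)| \leq \rho\}$, and the concentration estimate together with $\|\phi_R\|_\infty \leq 2R$ gives $F_R(t) = x(t) M(Q) + O(\varepsilon R)$; for $|x(t)| > R - \rho$ a crude $\ell^\infty$ bound on $\phi_R$ already forces $|x(t)| \geq R - \rho$, which combined with the previous growth estimate $F_R(t) = O(\varepsilon t + R)$ still forces $|x(t)| \leq C(\varepsilon t + R_\varepsilon)$. Together, $\limsup_{t \to +\infty} |x(t)|/t \leq C\varepsilon$, and sending $\varepsilon \to 0$ concludes.

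The main obstacle I expect is the bookkeeping in Claim 2: the truncation radius $R$ must be chosen large enough for the concentration error $O(\varepsilon R)$ to be dominated by the growth $O(\varepsilon t)$ only at sufficiently large $t$, so one has to carefully separate the regimes $|x(t)| \lesssim R$ and $|x(t)| \gtrsim R$ before taking limits. Claim 1 is conceptually cleaner, the only subtlety being the verification that Galilean invariance transfers scattering (in the $S(\dot H^{s_c})$ sense) between $u$ and $v$, which follows from the fact that multiplication by a plane-wave phase does not affect $L^q_t L^r_x$ norms while the spatial translation is absorbed into a change of variable.
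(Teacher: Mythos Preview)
Your argument for $P(u)=0$ is exactly the paper's: both proceed by contradiction, apply the Galilean shift with $\xi_0=-P(u)/M(u)$, observe that the resulting solution has $M=M(Q)$ but $E<E(Q)$ and $\|\nabla v_0\|_2<\|\nabla Q\|_2$, and invoke the subthreshold scattering theorem of \cite{yuan,G1}. You are in fact more careful than the paper in justifying why scattering of the Galilean transform transfers back to $u$ (modulus-one phase and spatial translation preserve all $L^q_tL^r_x$ norms); the paper simply asserts this.

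For $x(t)/t\to 0$, the paper gives no argument whatsoever and refers the reader to \cite{yuan} and \cite{G1}. Your localized center-of-mass sketch is precisely the standard proof one finds in those references (and in \cite{nonradial}), so you are supplying what the paper omits. The bookkeeping difficulty you flag in the last paragraph is genuine but has a clean resolution: rather than fixing $R$ once and splitting into the regimes $|x(t)|\lessgtr R-\rho$, one fixes a time horizon $T$, sets
\[
R \;=\; \rho(\varepsilon) \;+\; \sup_{0\le s\le T}|x(s)|,
\]
so that the identity $F_R(s)=x(s)M(Q)+O(\rho+R\varepsilon)$ holds for \emph{every} $s\in[0,T]$. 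Evaluating at a point $t_1\in[0,T]$ where $|x(\cdot)|$ attains its supremum and using $|F_R(t_1)-F_R(0)|\le C\varepsilon T$ then gives $(M(Q)-C\varepsilon)\sup_{[0,T]}|x|\le C(\rho+|x(0)|+\varepsilon T)$, hence $|x(T)|/T\le C_\varepsilon/T+C\varepsilon$. Letting $T\to\infty$ and then $\varepsilon\to 0$ finishes the proof without any case distinction.
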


\begin{proof}
The proof of \eqref{6.7} is easy. Indeed, assume $P(u)\neq0$ and consider the Galilean transformation of $u$
i.e.,
$w(x,t)=e^{ix\cdot\xi_0}e^{-it|\xi_0|^2}u(x-2\xi_0t,t)$. As was discussed in Remark \ref{p},
if we take $\xi_0=-P(u)/M(u)$ to minimize $E(w)$, then $M(w)=M(u)=M(Q)$, $E(w)<E(u)=E(Q)$ and immediately,
$M(w)^{\frac{1-s_c}{s_c}}E(w)<M(Q)^{\frac{1-s_c}{s_c}}E(Q)$.
By the result obtained in \cite{yuan},  this implies that $u$ must scatter in $H^1$, which contradicts the assumptions of the lemma
 concluding   \eqref{6.7}.

For the proof of \eqref{6.8}, one can refer to \cite{yuan}, and there is also a similar result in \cite{G1}.
\end{proof}

\subsection{Convergence in mean}
\begin{lemma}\label{l65}
Let $u$ be a solution of \eqref{1.1} satisfying the assumptions of Proposition \ref{p61}.
Then
\begin{align}\label{6.9}
\lim_{T\rightarrow+\infty}\frac1T\int_0^T\delta(t)=0,
\end{align}
where $\delta(t)$ is defined by \eqref{delta}.
\end{lemma}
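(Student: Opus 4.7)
The strategy is a truncated virial argument that exploits all three pieces of structure available here: the compactness of the trajectory from Corollary \ref{c63}, the vanishing momentum and sub-linear growth of the translation parameter from Lemma \ref{l64}, and the sign condition $\|\nabla u(t)\|_2 < \|\nabla Q\|_2$ guaranteed by Proposition \ref{p21'}. I would revisit the localized variance
\begin{align*}
y_R(t) = \int R^2 \varphi(x/R)\, |u(x,t)|^2\, dx
\end{align*}
with the same radial cutoff $\varphi$ used in Section 5.2. The computation leading to \eqref{5.16}--\eqref{5.17} yields
\begin{align*}
y_R''(t) = (2N(p-1)-8)\bigl(\|\nabla Q\|_2^2 - \|\nabla u(t)\|_2^2\bigr) + A_R(u(t)),
\end{align*}
where, in contrast with Section 5, the leading term is now $+(2N(p-1)-8)\delta(t)\ge 0$, and $A_R(u(t))$ is supported in $\{|x|\ge R\}$.

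The next step is to make $A_R$ uniformly small in $t$. By the precompactness of the set $K$ in \eqref{6.2}, for any $\epsilon>0$ there exists $\rho=\rho(\epsilon)$ such that
\begin{align*}
\int_{|x-x(t)|\ge \rho}\bigl(|\nabla u|^2+|u|^2+|u|^{p+1}\bigr)\,dx\le \epsilon \quad \text{for every } t\ge 0.
\end{align*}
Combined with Lemma \ref{l64}, which yields $T_\epsilon$ with $|x(t)|\le\epsilon t$ for $t\ge T_\epsilon$, the choice
\begin{align*}
R(T) = \epsilon T + \rho(\epsilon) + \sup_{t\in[0,T_\epsilon]}|x(t)|
\end{align*}
guarantees $\{|x|\ge R(T)\}\subset\{|x-x(t)|\ge\rho\}$ for every $t\in[0,T]$. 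Looking at the three contributions to $A_R$ in \eqref{5.17}, the gradient and $L^{p+1}$ tail pieces are then bounded by $C\epsilon$ uniformly in $t\in[0,T]$, while the Hardy-type term $R^{-2}\int\Delta^2\varphi(x/R)|u|^2$ is bounded by $CM(Q)/R(T)^2=o(1)$. Hence $|A_R(u(t))|\le C\epsilon$ uniformly on $[0,T]$ once $T$ is large enough.

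Integrating the virial identity on $[0,T]$ and bounding $|y_R'(t)|\le CR\,\|u\|_2\|\nabla u\|_2\le CR$ by mass and energy conservation, one gets
\begin{align*}
(2N(p-1)-8)\int_0^T\delta(t)\,dt \le |y_R'(T)|+|y_R'(0)|+\int_0^T|A_R(u(t))|\,dt \le C\,R(T)+C\epsilon T.
\end{align*}
Dividing by $T$ and using $R(T)/T\to\epsilon$ as $T\to\infty$ produces $\limsup_{T\to\infty}T^{-1}\int_0^T\delta(t)\,dt\le C\epsilon$, and the arbitrariness of $\epsilon$ closes the argument. The main obstacle is the careful coupling of the cutoff radius $R$ to the a priori growth of $|x(t)|$: since $|x(t)|$ is controlled only in the averaged sense $x(t)/t\to 0$, the truncation radius must be taken time-dependent, and it is precisely the linear-in-$R$ bound on $|y_R'|$ together with the sub-linearity of $|x(t)|$ that allows one to drive $R(T)\to\infty$ while keeping $R(T)/T$ arbitrarily small. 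Note also the essential role of $P(u)=0$ from Lemma \ref{l64}: without it, $|x(t)|$ would grow linearly in $t$, and the mechanism $R(T)/T\to 0$ would break down.
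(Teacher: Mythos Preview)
Your proposal is correct and follows essentially the same truncated virial argument as the paper: the same localized variance $y_R$, the same identity for $y_R''$, compactness of $K$ for the tail of $A_R$, the sub-linear bound $|x(t)|\le\epsilon t$ to choose $R\sim\epsilon T+\rho(\epsilon)$, and the $|y_R'|\le CR$ bound to close. The only cosmetic difference is that the paper integrates over $[t_0(\epsilon),T]$ and adds the initial segment afterwards, whereas you integrate on $[0,T]$ and absorb $\sup_{[0,T_\epsilon]}|x(t)|$ into the radius.
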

Before proving this lemma, we obtain from it the following corollary.
\begin{corollary}\label{c66}
Under the assumptions of Proposition \ref{p61}, there exists a sequence $t_n$ with
 $t_n+1\leq t_n$ such that $$\lim_{n\rightarrow+\infty}\delta(t_n)=0$$
as $t_n\rightarrow+\infty.$
\end{corollary}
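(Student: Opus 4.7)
The plan is to obtain Corollary \ref{c66} as an immediate consequence of Lemma \ref{l65} by a standard averaging/pigeonhole argument; no new ideas about the PDE are needed. I interpret the stated separation condition as $t_{n+1} \geq t_n + 1$ (the printed inequality appears to be a typo), so the goal is to construct a sequence $t_n \to +\infty$ with consecutive points at least one unit apart along which $\delta(t_n)\to 0$.

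Partition $[0,+\infty)$ into unit intervals $I_n = [n,n+1]$, $n\in\mathbb N$, and set
$$m_n := \inf_{t\in I_n}\delta(t).$$
Since $\delta(t)\geq 0$, the first step is to show that $\liminf_{n\to\infty} m_n = 0$. Suppose instead that there exist $\eta>0$ and $N_0\in\mathbb N$ such that $m_n\geq\eta$ for every $n\geq N_0$. Then for each $T>N_0+1$,
$$\frac{1}{T}\int_0^T\delta(t)\,dt \;\geq\; \frac{1}{T}\sum_{n=N_0}^{\lfloor T\rfloor-1}\int_{I_n}\delta(t)\,dt \;\geq\; \eta\cdot\frac{\lfloor T\rfloor-N_0}{T},$$
which tends to $\eta>0$ as $T\to\infty$, contradicting \eqref{6.9}. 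Hence $\liminf_{n\to\infty} m_n=0$, and we may extract a strictly increasing sequence of indices $n_k\to\infty$ with $n_{k+1}\geq n_k+2$ and $m_{n_k}\to 0$.

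Finally, for each $k$ choose $t_k\in I_{n_k}=[n_k,n_k+1]$ with $\delta(t_k)\leq m_{n_k}+1/k$. By construction $\delta(t_k)\to 0$ and $t_{k+1}\geq n_{k+1}\geq n_k+2\geq t_k+1$, so $(t_k)$ satisfies the required spacing condition and tends to $+\infty$. The main (mild) subtlety is choosing $n_{k+1}\geq n_k+2$ rather than $n_k+1$, which ensures the spacing survives after picking arbitrary representatives in the unit intervals; apart from that the argument is entirely routine, the substance of the corollary being already contained in Lemma \ref{l65}.
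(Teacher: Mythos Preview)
Your argument is correct; the paper does not give a separate proof of the corollary but simply records it as a consequence of Lemma~\ref{l65}, and your pigeonhole/averaging argument on unit intervals is exactly the routine way to make that implication precise (including your fix of the evident typo in the spacing condition).
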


Proof of Lemma \ref{l65}:

Let $\varphi\in C^\infty$ be defined as that in subsection 5.2:
$0\leq\varphi(r), \varphi''(r)\leq2$ and that
$\varphi(r)=r^2$ for $0\leq r\leq1$ while $\varphi(r)\equiv0$ for $r\geq2$.
We consider the localized variance $y_R(t)=\int R^2\varphi(\frac xR)|u(x,t)|^2dx$ again
and recall from subsection 5.2:
\begin{align}\label{6.11}
y'_R(t)=2RIm\int\bar u\nabla\varphi (\frac xR)\cdot\nabla u,
\ \ \ |y'_R(t)|\leq CR,
\end{align}
and
\begin{align}\label{6.13}
y''_R=\left(2N(p-1)-8\right)\left(\|\nabla Q\|^2_2-\|\nabla u\|^2_2\right)+A_R(u)
=\left(2N(p-1)-8\right)\delta(t)+A_R(u),
\end{align}
where
\begin{align}\label{6.12}
A_R(u(t))
=&4\sum_{j\neq k}\int\partial_j\partial_k\varphi(\frac xR)\partial_ju\partial_k\bar u
+4\sum_j\int\left(\partial^2_{x_j^2}\varphi(\frac xR)-2
\right)|\partial_ju|^2\\ \nonumber &
-\frac 1{R^2}\int\Delta^2\varphi(\frac xR)|u|^2
-\frac{2(p-1)}{p+1}\int\left(\Delta\varphi(\frac xR)-2N
\right)|u|^{p+1}.
\end{align}
By the properties of $\varphi$, we can obtain the the estimate for $A_R(u(t))$:
\begin{align}\label{6.14}
|A_R(u(t))|\leq C\int_{|x|\geq R}|\nabla u|^2+\frac1{R^2}|u|^2+|u|^{p}.
\end{align}
Let $x(t)=X(t)$ be as in $K$ defined by Corollary \ref{c63}.
By compactness of $K$, there exists $R_0(\epsilon)>0$
such that
\begin{align}\label{6.15}
\int_{|x-x(t)|\geq R_0(\epsilon)}|\nabla u|^2+|u|^2+|u|^{p}\leq\epsilon,\ \ \forall t\geq0.
\end{align}
Furthermore, by \eqref{6.8}, there exists $t_0(\epsilon)\geq0$ such that
\begin{align}\label{6.16}
|x(t)|\leq\epsilon t,\ \ \forall t\geq t_0(\epsilon).
\end{align}
Let $T\geq t_0(\epsilon)$ and $R=\epsilon T+R_0(\epsilon)+1$ for $t\in[t_0(\epsilon),T].$
Since $|x(t)|\leq\epsilon T$ and $\epsilon T+R_0(\epsilon)\leq R$, we get that
\begin{align}\label{6.17}
|A_R(u(t))|\leq &C\int_{|x|\geq R}|\nabla u|^2+\frac1{R^2}|u|^2+|u|^{p}\\ \nonumber
\leq &C\int_{|x-x(t)|+|x(t)|\geq R}|\nabla u|^2+|u|^2+|u|^{p}
\leq C\int_{|x-x(t)|\geq R_0(\epsilon)}|\nabla u|^2+|u|^2+|u|^{p}\leq\epsilon.
\end{align}
By \eqref{6.11} and \eqref{6.13},
$$\int_{t_0(\epsilon)}^T[4\delta(t)+A_R(u(t))]dt=
\int_{t_0(\epsilon)}^Ty''_R(t)dt
\leq|y'_R(t)|+|y'_R(t_0(\epsilon))|\leq CR.$$
\eqref{6.14} combined with \eqref{6.17} gives then, for some $C>0$ independent of $T$ and $\epsilon$,
$$\int_{t_0(\epsilon)}^T\delta(t)dt\leq C(R+T\epsilon)\leq C(R_0(\epsilon)+1+T\epsilon).$$
Thus, we obtain
$$\frac1T\int_0^T\delta(t)dt\leq \frac1T\int_0^{t_0(\epsilon)}\delta(t)dt+\frac CT(R_0(\epsilon)+1)+C\epsilon.$$
Passing to the limit first as $T\rightarrow+\infty$, then letting $\epsilon\rightarrow0$, we obtain \eqref{6.9}.

\ \ \ \ \ \ \ \ \ \ \ \ \ \ \ \ \ \ \ \ \ \ \ \ \ \ \ \ \ \ \ \ \ \ \ \ \ \ \ \ \ \ \ \ \ \ \ \ \ \ \ \ \ \ \ \ \ \ \ \ \ \ \ \ \ \ \
\ \ \ \ \ \ \ \ \ \ \ \ \ \ \ \ \ \ \ \ \ \ \ \ \ \ \ \ \ \ \ \ \ \ \ \ \ \ $\Box$

\subsection{Exponential convergence}
The aim of
 this subsection is to prove Proposition \ref{p61} by using the following
 Lemma \ref{l67} which  is a localized virial argument, and Lemma \ref{l68}, a precise
 control of the variations of the parameter $x(t)$.

\begin{lemma}\label{l67}
Let $u$ be a solution of \eqref{1.1} satisfying the assumptions of Proposition \ref{p61}
and $x(t)$ defined by Corollary \ref{c63}.
Then there exists a constant $C$ such that if $0\leq\sigma<\tau$
\begin{align}\label{6.18}
\int_\sigma^\tau\delta(t)dt\leq
C\Big(1+\sup_{\sigma\leq t\leq\tau}|x(t)|\Big)(\delta(\sigma)+\delta(\tau)).
\end{align}
\end{lemma}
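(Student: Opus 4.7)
My plan is to apply a localized virial identity using the cutoff functional $y_R(t) = R^2 \int \varphi(x/R) |u(x,t)|^2\,dx$ from Subsection 5.2, now with a scale $R$ tailored to $[\sigma,\tau]$. Specifically, fix a small $\epsilon > 0$ and use the compactness of $K$ (Corollary \ref{c63}) to choose $R_0 = R_0(\epsilon)$ so that $\int_{|x - x(t)| \geq R_0}(|\nabla u|^2 + |u|^2 + |u|^{p+1})\,dx \leq \epsilon$ uniformly in $t \geq 0$, and set
\[
R \equiv R_0 + \sup_{\sigma \leq t \leq \tau}|x(t)|.
\]
For every $t \in [\sigma, \tau]$ we then have $\{|x| \geq R\} \subset \{|x - x(t)| \geq R_0\}$, and the identity \eqref{6.13} reads $y_R''(t) = (2N(p-1)-8)\delta(t) + A_R(u(t))$.

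The first ingredient is a pointwise lower bound $y_R''(t) \geq c\,\delta(t)$ on $[\sigma,\tau]$, mimicking the claim \eqref{yR''} of Subsection 5.2 but with the opposite sign, since here $\|\nabla u\|_2 < \|\nabla Q\|_2$ and so $\delta \geq 0$ appears with a positive coefficient. I split into two cases with a threshold $\delta_1 < \delta_0$: when $\delta(t) \geq \delta_1$, the uniform bound $|A_R(u(t))| \leq C\epsilon$ (from the choice of $R_0$ and the compactness of $K$) gives $y_R''(t) \geq (2N(p-1)-8)\delta(t) - C\epsilon \geq c\delta(t)$ provided $\epsilon \leq c'\delta_1$; when $\delta(t) < \delta_1$, I invoke the modulation decomposition of Section 4 to write $u(t,\cdot+X(t)) = e^{i(1-s_c)t+i\theta(t)}((1+\alpha)Q + h)$ with $|\alpha|+\|h\|_{H^1} \leq C\delta(t)$, and use the exponential decay of $Q$ to obtain $|A_R(u(t))| \leq C(e^{-cR}\delta(t) + \delta(t)^2 + \delta(t)^{p+1}) \leq c\delta(t)/2$ for $R$ large and $\delta_1$ small.

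The second and more delicate ingredient is the endpoint estimate $|y_R'(t)| \leq CR\,\delta(t)$ for $t = \sigma,\tau$. When $\delta(t) \geq \delta_1$, the trivial bound $|y_R'(t)| \leq CR\|u\|_2\|\nabla u\|_2 \leq CR$ absorbs $1/\delta_1$ into the constant. When $\delta(t) < \delta_0$, I substitute the modulation decomposition and change variables $y = x - X(t)$: writing $v \equiv \alpha Q + h$, the phase factor cancels in $\bar u \nabla u$ and a direct expansion yields
\[
Im\bigl[(Q+\bar v)(\nabla Q + \nabla v)\bigr] = Q\nabla v_2 - v_2 \nabla Q + Im[\bar v \nabla v],
\]
the leading $Im(Q\nabla Q)$ term having vanished because $Q$ is real. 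Since $\|\nabla\varphi\|_\infty < \infty$ and $Q \in \mathcal{S}$, the Cauchy--Schwarz inequality then gives
\[
|y_R'(t)| \leq CR\bigl(\|v\|_{H^1}\|Q\|_{H^1} + \|v\|_{H^1}^2\bigr) \leq CR\,\delta(t).
\]

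Combining the two ingredients and integrating $y_R''(t) \geq c\delta(t)$ over $[\sigma,\tau]$ yields
\[
c\int_\sigma^\tau\delta(t)\,dt \leq y_R'(\tau) - y_R'(\sigma) \leq |y_R'(\sigma)| + |y_R'(\tau)| \leq CR(\delta(\sigma) + \delta(\tau)),
\]
and the conclusion follows since $R \leq R_0\bigl(1+\sup_{[\sigma,\tau]}|x(t)|\bigr)$. The main obstacle will be the endpoint estimate in the small-$\delta$ regime: one must carefully expand $Im(\bar u \nabla u)$ around $Q$ after the modulation translation and exploit the realness of $Q$ to eliminate the $O(1)$ contribution that would otherwise produce only $|y_R'(t)| \leq CR$ and spoil the linear dependence on $\delta(t)$.
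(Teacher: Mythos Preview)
Your proposal is correct and follows essentially the same approach as the paper's proof: the localized virial $y_R$ with $R$ proportional to $1+\sup_{[\sigma,\tau]}|x(t)|$, the two-regime analysis (small $\delta$ via the modulation decomposition and exponential decay of $Q$, large $\delta$ via compactness of $K$) both for the remainder $A_R$ and for the endpoint bound $|y_R'(t)|\leq CR\,\delta(t)$, and the final integration of $y_R''\geq c\,\delta$. The only cosmetic differences are that the paper states the claim on $A_R$ in the multiplicative form $R\geq R_\epsilon(|x(t)|+1)\Rightarrow |A_R(u(t))|\leq\epsilon\,\delta(t)$ before specializing $R$, whereas you fix $R=R_0+\sup|x(t)|$ from the outset; both lead to the same conclusion.
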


\begin{proof}
For $R>0$ we consider the localized variance $y_R(t)=\int R^2\varphi(\frac xR)|u(x,t)|^2dx$.
Recall that
\begin{align}\label{6.19}
y'_R(t)=2RIm\int\bar u\nabla\varphi (\frac xR)\cdot\nabla u,
\ \ \
y''_R
=\left(2N(p-1)-8\right)\delta(t)+A_R(u),
\end{align}
where
$
A_R(u(t))$ is defined by \eqref{6.12}.

Now we show that if $\epsilon>0$,  there exists $R_\epsilon>0$ such that
\begin{align}\label{6.20}
\forall t\geq0,\ \ R\geq R_\epsilon(|x(t)|+1)\ \ \Rightarrow\ \  |A_R(u(t))|\leq \epsilon\delta(t).
\end{align}
The proof of the claim is divided in two cases.
When $\delta(t)$ is small, we consider $\delta_0$ as in section 4 and choose $0<\delta_1<\delta_0$
to be determined. For $t\in D_{\delta_1|}$, let $v=h+\alpha Q$ and then from \eqref{4.4}
and Lemma \ref{l42} we get that
\begin{align}\label{6.21}
u(x,t)=e^{i(t+\theta(t))}(Q(x-X(t))+v(x-X(t),t)),\ \ \ \ \|v\|_{H^1}\leq C\delta(t).
\end{align}
Note that fix $\theta_0$ and $X_0$, then  $A_R(e^{i\theta_0}e^{it}Q(\cdot+X_0))=0$ for any $R$ and $t$.
We obtain from the definition of $A_R$ that
\begin{align*}
&|A_R(u)|=|A_R(u)-A_R(e^{i\theta_0}e^{it}Q(\cdot+X_0))|\\
\leq &C\int_{|y+X(t)|\geq R}(|\nabla Q(y)||\nabla v(y)|+|\nabla v(y)|^2+Q(y)|v(y)|+|v(y)|^2+|v(y)|^{p+1}
)dy\\
 \leq &C\int_{|y+X(t)|\geq R}e^{-|y|}(|\nabla v(y)|+|v(y)|+|v(y)|^{p}
)dy+\int_{|y+X(t)|\geq R}(|\nabla v(y)|^2+|v(y)|^2+|v(y)|^{p+1}
)dy,
\end{align*}
Since $\|v\|_{H^1}\leq C\delta(t)$ by Lemma \ref{l42}, then
choosing $R_0$ sufficiently large and $\delta_1$ small enough, we obtain
\begin{align}\label{6.23}
R\geq|X(t)|+ R_0,\ \ \delta(t)\leq\delta_1, \ \ \Rightarrow\ \  |A_R(u(t))|
\geq \epsilon\delta(t).
\end{align}
Recall that by \eqref{6.6}, $x(t)=X(t)$ on $D_{\delta_0}$ and \eqref{6.23} implies \eqref{6.20} for $\delta(t)<\delta_1$.

In the case $\delta(t)\geq\delta_1$, there exists some $C>0$ such that for any $t\geq0$,
\begin{align*}
|A_R(u)|
\leq &C\int_{|x|\geq R}(|\nabla u(y)|^2+|u(y)|^2+|u(y)|^{p+1}
)dx\\
 \leq &C\int_{|x-x(t)|\geq R-|x(t)|}(|\nabla u(y)|^2+|u(y)|^2+|u(y)|^{p+1}
)dx.
\end{align*}
By the compactness of $K$, there exists $R_1>0$ such that
\begin{align}\label{6.24}
R\geq |x(t)|+R_1,\ \ \delta(t)\geq\delta_1, \ \ \Rightarrow\ \  |A_R(u(t))|
\geq \epsilon\delta_1\leq\epsilon\delta(t).
\end{align}
Finally, we have proved \eqref{6.20}.

By \eqref{6.19} and  \eqref{6.20}, we obtain that there exists $R^*>0$ such that
$$R\geq R^*(|x(t)|+1)\ \ \Rightarrow\ \  y''_R(t)\geq (N(p-1)-4)\delta(t).$$
Let $R=R^*(\sup_{\sigma\leq t\leq\tau}|x(t)|+1)$,
 we obtain
\begin{align}\label{6.25}
(N(p-1)-4)\int_\sigma^\tau\delta(t)dt\leq\int_\sigma^\tau y''_R(t)dt=y'_R(\tau)-y'_R(\sigma).
\end{align}
If $\delta(t)<\delta_0$, by \eqref{6.19} and \eqref{6.21}, then
\begin{align*}
y'_R(t)=&2RIm\int\bar v(z)\nabla\varphi (\frac {z+X(t)}R)\cdot\nabla Q(z)\\
&+2RIm\int Q(z)\nabla\varphi (\frac {z+X(t)}R)\cdot\nabla v(z)
+2RIm\int\bar v(z)\nabla\varphi (\frac {z+X(t)}R)\cdot\nabla  v(z),
\end{align*}
which implies by Lemma \ref{l42} that $|y'_R(t)|\leq CR(\delta(t)+\delta^2(t))\leq R\delta(t)$.
On the other hand, when $\delta(t)\geq\delta_0$, the above inequality  follows by straightforward estimate.
Hence by \eqref{6.25} and the choice $R=R^*(\sup_{\sigma\leq t\leq\tau}|x(t)|+1)$, we obtain \eqref{6.18} and complete our proof.

\end{proof}

The following lemma is to control of the variations of $x(t)$.
\begin{lemma}\label{l68}
There  exists a constant $C$ such that for any $\sigma,\tau>0$
with $\sigma+1\leq\tau$,
\begin{align}\label{6.26}
|x(\tau)-x(\sigma)|\leq C\int_\sigma^\tau\delta(t)dt.
\end{align}
\end{lemma}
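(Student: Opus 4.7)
My plan is to prove Lemma~\ref{l68} by a localized center-of-mass argument, exploiting the vanishing of momentum $P(u)=0$ (Lemma \ref{l64}), the precompactness of $K$ (Corollary \ref{c63}), the modulation theory of Section~4, and the radiality of $Q$.

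Fix a smooth radial cutoff $\phi$ with $\phi = 1$ on $\{|y|\leq 1\}$, $\phi = 0$ on $\{|y|\geq 2\}$, and set $\phi_R(y) = \phi(y/R)$. For each coordinate $k$, consider the truncated first moment around $x(\sigma)$,
$$
z_R^k(t) := \int (x_k - x(\sigma)_k)\,\phi_R(x - x(\sigma))\,|u(x,t)|^2\,dx,
$$
and choose $R = R_* + \sup_{t\in[\sigma,\tau]}|x(t)-x(\sigma)|$ with $R_*$ sufficiently large. Differentiation in $t$ and integration by parts gives
$$
(z_R^k)'(t) = 2\operatorname{Im}\!\int\!\phi_R(x{-}x(\sigma))\,\bar u\,\partial_k u\,dx + 2\operatorname{Im}\!\int\!(x_k{-}x(\sigma)_k)\,\nabla\phi_R(x{-}x(\sigma))\cdot\bar u\,\nabla u\,dx.
$$
Using $P_k(u)=0$, the first integral reduces to $-2\operatorname{Im}\int (1-\phi_R)\bar u\,\partial_k u\,dx$. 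Both remaining integrals are supported on $\{|x-x(\sigma)|\geq R\}\subset\{|x-x(t)|\geq R_*\}$ for every $t\in[\sigma,\tau]$. Here I estimate the tail of $\bar u\,\nabla u$ by $C\delta(t)$ in two regimes: when $\delta(t)<\delta_0$, I plug in the modulation $u(\cdot+X(t),t)=e^{i(1-s_c)t+i\theta(t)}((1+\alpha)Q+h)$ with $\|h\|_{H^1}\leq C\delta$ (Lemma~\ref{l42}) and exploit the exponential decay of $Q$; when $\delta(t)\geq\delta_0$, the uniform tail control from the compactness of $K$ is absorbed into $(C/\delta_0)\delta(t)$. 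Integrating yields $|z_R^k(\tau)-z_R^k(\sigma)|\leq C\int_\sigma^\tau \delta(t)\,dt$.

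The change of variable $y = x - x(t)$ decomposes
$$
z_R^k(t) = (x_k(t)-x_k(\sigma))\!\int\!\phi_R(y+x(t)-x(\sigma))|u(\cdot+x(t),t)|^2 dy + \mu_k(t),
$$
with $\mu_k(t):=\int y_k\,\phi_R(y+x(t)-x(\sigma))|u(\cdot+x(t),t)|^2\,dy$. By compactness of $K$ and the choice of $R$, the first factor equals $M(Q)+o(1)$. For $\mu_k(t)$: on $D_{\delta_0}$, the modulation decomposition together with the radiality of $Q$ (so $\int y_k Q^2 = 0$) and the orthogonality $\int\partial_k Q\cdot h_1 = 0$ from \eqref{4.3} give $|\mu_k(t)|\leq C\delta(t)$; outside $D_{\delta_0}$, the compactness of $K$ gives a uniform bound, which is absorbed into $(C/\delta_0)\delta(t)$. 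Combining with the previous step, I obtain
$$
|x(\tau)-x(\sigma)|\,M(Q) \leq C\int_\sigma^\tau \delta(t)\,dt + C(\delta(\sigma)+\delta(\tau)).
$$
Finally, using the hypothesis $\tau\geq\sigma+1$ and the continuity of $\delta$ (inherited from $u\in C([0,\infty);H^1)$ and compactness of $K$), the boundary contributions $\delta(\sigma)$ and $\delta(\tau)$ are absorbed into the integral by invoking Lemma~\ref{l67} on short subintervals anchored at $\sigma$ and $\tau$, yielding the desired estimate.

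The main obstacle is bounding $\mu_k(t)$ linearly by $\delta(t)$ uniformly in the cutoff scale $R$, since $R$ may grow with $\sup_{[\sigma,\tau]}|x(t)-x(\sigma)|$, and a naive estimate of $\int y_k\,\phi_R|h|^2\,dy$ would be of order $R\delta^2$. The key cancellations come from radiality of $Q$ and the orthogonality \eqref{4.3}; to keep constants uniform I split the integration into a fixed-size core $\{|y|\leq R_*\}$ (where the smallness is driven by $\|h\|_{H^1}^2$ or by the compact tail of $K$) and its exterior (controlled by the compactness of $\overline{K}$), rather than allowing $R$ to scale into the bound.
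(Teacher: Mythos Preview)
Your overall approach---truncated first moment, $P(u)=0$, modulation on $D_{\delta_0}$, compactness of $K$---is the one in \cite{holmer3}, to which the paper defers, and your derivative estimate $|(z_R^k)'(t)|\leq C\delta(t)$ with $C$ independent of $R$ is correct: the prefactor $(x_k-x(\sigma)_k)\nabla\phi_R$ is bounded uniformly in $R$, and on the tail $\{|x-x(t)|\geq R_*\}$ the $Q$--$Q$ part of $\mathrm{Im}(\bar u\nabla u)$ vanishes while the remainder is $O(\delta)$.

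The gap is in the boundary term $\mu_k$. Take $t=\sigma$ with $\delta(\sigma)<\delta_0$, so that $\mu_k(\sigma)=\int y_k\phi_R(y)\bigl[(1+\alpha)^2Q^2+2(1+\alpha)Qh_1+|h|^2\bigr]\,dy$. The first piece vanishes by radiality; the second is $O(\delta(\sigma))$ uniformly in $R$ since $|y_k\phi_R Q|\leq|y|Q\in L^2$. But for the quadratic piece $\int y_k\phi_R(y)|h|^2\,dy$ one only has $|y_k\phi_R|\leq 2R$, yielding $O(R\,\delta(\sigma)^2)$: there is no weighted estimate on $h$, and on the exterior $\{|y|>R_*\}$ compactness of $K$ supplies only $\int_{|y|>R_*}|u|^2\leq\epsilon(R_*)$, which after multiplication by $|y_k|\leq 3R$ still gives $O(R\,\epsilon(R_*))$. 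Either way an explicit factor of $R=R_*+\sup_{[\sigma,\tau]}|x(t)-x(\sigma)|$ survives, so the constant in your claimed bound $|\mu_k|\leq C\delta$ depends on precisely the quantity you are trying to control. (The orthogonality $\int\partial_kQ\,h_1=0$ you cite is not the relevant cancellation here: the cross term involves $y_kQ$, not $\partial_kQ$.) A bootstrap on $\sup|x(t)-x(\sigma)|$ absorbs the $R$-dependent pieces only up to an additive constant of order $R_*\epsilon(R_*)$, which cannot be absorbed into $\int_\sigma^\tau\delta$ since the latter may be arbitrarily small. Finally, Lemma~\ref{l67} bounds $\int\delta$ by the endpoint values, not the reverse, so it cannot be used as you propose; the needed inequality $\delta(s)\leq C\int_s^{s+1}\delta$ requires its own argument (via $|\alpha'|\leq C\delta$ and $|\alpha|\approx\delta$ on $D_{\delta_0}$ from Lemmas~\ref{l42}--\ref{l43}, together with boundedness of $\delta$ and uniform continuity of the flow off $D_{\delta_0}$).
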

The proof of the lemma  can be found  in \cite{holmer3} (Lemma 6.8 there).

Now, we are ready to show Proposition \ref{p61}. \\
Proof of Proposition \ref{p61}:
Consider the sequence $t_n$ given by Corollary \ref{c66} and so
$t_n\rightarrow+\infty$, $t_n+1\leq t_n$, and $\delta(t_n)\rightarrow0$.
By Lemma \ref{l67} and Lemma \ref{l68}, there exists some $C_0>0$ 
 such that
$$\forall n>N_0,\ \ 1+t_{N_0}\leq t_n\ \ \Rightarrow\ \
|x(t_{N_0})-x(t)|\leq C_0(1+\sup_{[t_{N_0},t_n]}|x(t)|)[\delta(t_{N_0})+\delta(t_n)].$$
We choose $t$ such that $|x(t)|=\sup_{[t_{N_0}+1,t_n]}|x(s)|$ and then
$$\sup_{[t_{N_0}+1,t_n]}|x(s)|\leq C(N_0)+C_0(1+\sup_{[t_{N_0}+1,t_n]}|x(s)|)[\delta(t_{N_0})+\delta(t_n)]$$
with $C(N_0)=|x(N_0)|+C_0\sup_{[t_{N_0},t_{N_0}+1]}|x(s)|$.
Fixing  $N_0$ large enough, we can assume $\delta(t_{N_0})+\delta(t_n)\leq1$
and $C_0\delta(t_{N_0})\leq\frac12$. Thus, for $t_n\geq t_{N_0}+1$,
$$\frac12\sup_{[t_{N_0}+1,t_n]}|x(s)|\leq C(N_0)+\frac12+C_0(1+\sup_{[t_{N_0}+1,t_n]}|x(s)|)\delta(t_n).$$
Letting $n\rightarrow+\infty$, since $\delta(t_n)\rightarrow0$, we obtain
that $|x(t)|$ is bounded on $[t_{N_0}+1,+\infty)$. By continuity, we finally obtain the boundedness
of $|x(t)|$ on $[0,+\infty)$.

Lemma \ref{l67} combined with the boundedness of $x(t)$ gives that
for any $\sigma,\tau>0$ and $0\leq\sigma\tau$, $\int_\sigma^\tau\delta(t)dt\leq C(\delta(\sigma)+\delta(\tau)).$
If we take $\tau=t_n$  and let $n\rightarrow+\infty$, we obtain that $\int_0^\infty\delta(t)dt<\infty$.
Thus, for any $\sigma>0$, $\int_\sigma^\infty\delta(t)dt\leq C\delta(\sigma),$
 By Gronwall's Lemma, we obtain that there exist $C,c>0$
$$\int_\sigma^\infty\delta(t)dt\leq Ce^{-c\sigma}.$$
Since $\sigma>0$ is arbitrary, we have concluded the proof of Proposition \ref{p61}
again using Lemma \ref{l44}. \ \ \ \ \ \ \ \ \ \ \ \ \ \ \ \ \ \ \ \ \ \ \ \ \ \ \ \ \ \ \ \ \ \ \ \ \ 
 \ \ \ \ \ \ \ \ \ \ \ \  \ \ \ \ \ \ \ \ \ \ \ \  \ \ \ \ \ \ \ \ \ \ \ \  \ \ \ \ \ \ \ \ \ \ \ \  \ \ \ \ \ \ \ \ \ \ \ \ 
 \ \ \ \ \ \ \ \ \ \ \ \  \ \ \ \ \ \ \ \ \ \ \ \  \ \ \ \ \ \ \ \ \ \ \ \  \ \ \ \ \ \ \ \ \ \ \ \  \ \ \ \ \ \ \ \ \ \ \ \  $\Box$

\subsection{Scattering of $Q^-$ for negative times}
In the final subsection,
we conclude the proof of Theorem \ref{th2} by showing that
 the special solution $Q^-$ scatters as $t\rightarrow-\infty.$
If not, we apply the argument of above subsections to the solution
$Q^-$ and $\overline{Q^-}(x,-t)$ of \eqref{1.1} and obtain a parameter $x(t)$
defined for $t\in \mathbb R$ such that
$\tilde K=\{Q^-(\cdot+x(t),t),t\in\mathbb R\}$ has a compact closure in $H^1$.
By the argument at the end of Subsection 6.3, $x(t)$ is bounded and $\delta(t)$
tends to $0$ as $t\rightarrow\pm\infty$.
A simple adjustment of Lemma \ref{l67} implies that if $-\infty<\sigma\leq\tau<
+\infty$ then
$$\int_\sigma^\tau\delta(t)dt\leq
C\Big(1+\sup_{\sigma\leq t\leq\tau}|x(t)|\Big)(\delta(\sigma)+\delta(\tau))
\leq C(\delta(\sigma)+\delta(\tau)).$$
Letting $\sigma\rightarrow-\infty$ and $\tau\rightarrow+\infty$, we obtain then
$\int_\mathbb R\delta(t)dt=0.$ Thus $\delta(t)=0$ for all $t$
which contradicts the assumption $\|\nabla u_0\|_2<\|\nabla Q\|_2$.

\section{uniqueness}
We will finally conclude the proof of Theorem \ref{th3} in this section. The main point is to show the following
 uniqueness result. We want to point out that our arguments in this section are different from that in \cite{holmer3},
which are indeed  invalid for our general $L^2$-supercritical case.
\begin{proposition}\label{p71}
Let $u$ be a solution of \eqref{1.1} defined on $[t_0,+\infty)$
such that $E(u)=E(Q)$, $M(u)=M(Q)$. Assume that there exist $c,C>0$ such that
for any $t\geq t_0$,
\begin{align}\label{7.1}
\|u-e^{i(1-s_c)t}Q\|_{H^1}\leq Ce^{-ct}.
\end{align}
Then there exists $A\in\mathbb R$ such that $u=U^A$, where
$U^A$ is defined by Proposition \ref{UA}.
\end{proposition}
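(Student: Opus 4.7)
The plan is to identify the parameter $A$ from the spectral content of $u$, show that the difference $v := u - U^A$ decays faster than any $e^{-ke_0 t}$, and then invoke uniqueness in the fixed-point argument of Section~3 to conclude $v \equiv 0$.

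First I would linearize: write $u(t) = e^{i(1-s_c)t}(Q + h(t))$ and $U^A(t) = e^{i(1-s_c)t}(Q + h^A(t))$, so that both $h$ and $h^A$ satisfy \eqref{linea} with the same nonlinearity $R$. The hypothesis \eqref{7.1} gives $\|h(t)\|_{H^1} \leq Ce^{-ct}$ for some $c>0$, while $h^A(t) = \mathcal V_k^A(t) + O(e^{-(k+1/2)e_0 t})$ in every $H^b$ by the construction of $U^A$. Using Proposition~\ref{spectral}, decompose $h = \alpha_+\mathcal Y_+ + \alpha_-\mathcal Y_- + h_\perp$, where $\alpha_\pm$ are extracted via the biorthogonal pairing with the eigenfunctions of $\mathcal L^\ast$ at $\mp e_0$ and $h_\perp$ lies in the $\mathcal L$-invariant complement of $\mathrm{span}\{\mathcal Y_\pm\}$. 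The equation for $h$ then yields the scalar ODEs $\alpha_\pm' \mp e_0 \alpha_\pm = \langle R(h),\cdot\rangle = O(\|h\|_{H^1}^2)$, while $h_\perp$ evolves dispersively with the coercivity \eqref{coer} of $\Phi$ as an energy control.

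Next I would select $A$ so that $h$ and $h^A$ match to leading order along the stable direction $\mathcal Y_+$. Integrating the $\alpha_-$ ODE from $t$ to $\infty$, which is legitimate because $h(t)\to 0$, forces the unstable component $\alpha_-(t)$ to be determined self-consistently by the nonlinearity. The stable component $\alpha_+(t)$ is solved by Duhamel from the right to obtain $\alpha_+(t) = Ae^{-e_0 t} + O(e^{-(e_0+\mu)t})$ for some $\mu>0$, and this defines $A := \lim_{t\to\infty} e^{e_0 t}\alpha_+(t)$, provided the initial decay rate exceeds $e_0/2$; if it does not, a preliminary bootstrap using only the a~priori decay and the coercivity of $\Phi$ improves $c$ to any value less than $e_0$ before extracting $A$. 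Setting $v = h - h^A$, the equation
\begin{equation*}
i\partial_t v + \Delta v - (1-s_c)v = -\bigl(S(Q+h)-S(Q+h^A)\bigr),
\end{equation*}
combined with \eqref{eR}--\eqref{eR2} for the modified propagator $e^{it(\Delta-(1-s_c))}$, controls the right-hand side by $(\|h\|_{H^1}+\|h^A\|_{H^1})\|v\|_{L^{\tilde q}_t W^{1,\tilde r}_x}$. Bootstrap argument: assuming $\|v\|_{H^b}\leq Ce^{-\beta t}$ on $[T,\infty)$, the spectral decomposition of $v$ together with the choice of $A$ upgrades this, via Strichartz estimates on dyadic time intervals and mode-by-mode integration of the $\alpha_\pm$ ODEs, to $\|v\|_{H^b}\leq Ce^{-(\beta+e_0)t}$, as long as $\beta$ is not a critical multiple of $e_0$. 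Iterating drives the decay of $v$ below $e^{-(k+1/2)e_0 t}$ for every $k$ and every $b$.

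At that point, for $b > N/2$ and $k$ large, both $h - \mathcal V_k^A$ and $h^A - \mathcal V_k^A$ lie in the ball $B(t_k,k,b)$ on which $\mathcal M$ is a contraction (Section~3.2), and both are fixed points of $\mathcal M$; uniqueness of the fixed point yields $h = h^A$, i.e.\ $u = U^A$. The main obstacle is the bootstrap step. In \cite{holmer3}, $p=3$ is an integer, so $R(h)-R(h^A)$ expands as a finite polynomial and mode-by-mode matching in $H^b$ is clean. Here $p$ is generally non-integer when $N\geq 4$, so the only available tool is the Strichartz machinery of Section~2.2 in the mixed space $L^{\tilde q}_t W^{1,\tilde r}_x$; keeping track of exponential weights in this space while securing the sharp gain $e^{-e_0 t}$ per iteration requires careful interplay between the spectral projections (which, because $\mathcal L$ is non-self-adjoint, must be constructed from eigenfunctions of both $\mathcal L$ and $\mathcal L^\ast$ and shown to be bounded on $H^b$ using $\mathcal Y_\pm\in\mathcal S$) and the linear Strichartz bounds for $e^{it(\Delta-(1-s_c))}$.
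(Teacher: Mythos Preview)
Your proposal is correct and follows essentially the same route as the paper: bootstrap the decay of $h$ up to $e_0^-$, extract $A$ from the $\mathcal Y_+$-component, iterate on $h-h^A$ to gain roughly $e_0$ per step in $L^{\tilde q}_tW^{1,\tilde r}_x\cap L^\infty_tH^1_x$, and then invoke the fixed-point uniqueness of Section~3. The paper packages the spectral step through the bilinear form $B$ and the linearized energy $\Phi$ (with the kernel modes $iQ,\partial_{x_j}Q$ separated out and coercivity on $G'_\perp$) rather than biorthogonal $\mathcal L^\ast$-projections, but since $\alpha_\pm=B(v,\mathcal Y_\mp)$ these two viewpoints coincide here.
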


The proof of Theorem \ref{th3} is divided into three parts.
In subsection 7.1, we analyze
 the linearized equation
and the spectral
properties of $\mathcal L$ defined by \eqref{linea},
using which we conclude the proof of Proposition \ref{p71} in
subsection 7.2. Finally, in subsection 7.3, we finish the proof of Theorem \ref{th3}.

Throughout this section, we  often use the following integral summation argument
introduced in  \cite{M2} (Claim 5.8 there):
\begin{lemma}\label{summation}
Let $t_0>0$, $p\geq1$, $a_0\neq0$ and $E$ is a normed vector space. If
 $f\in L_{loc}^p([t_0,\infty);E)$
satisfies  that
$$\exists\tau_0>0, C_0>0,\ \ \forall t\geq t_0,\ \ \ \|f\|_{L^p([t,t+\tau_0);E)}\leq C_0e^{a_0t},$$
 then, for $t\geq t_0$, we have
$$\|f\|_{L^p([t,\infty);E)}\leq \frac{C_0e^{a_0t}}{1-e^{a_0\tau_0}},
\ \ \ if\ \ a_0<0;\ \ \
\|f\|_{L^p([t_0,t);E)}\leq \frac{C_0e^{a_0t}}{1-e^{-a_0\tau_0}},
\ \ \ if\ \ a_0>0.$$
\end{lemma}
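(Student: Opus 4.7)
The plan is to prove both bounds by partitioning the domain of integration into consecutive windows of length $\tau_0$, applying Minkowski's inequality for the $L^p$ norm on disjoint pieces, invoking the hypothesis on each piece, and then summing the resulting geometric series. The clean form of the stated bound (with $1-e^{\pm a_0\tau_0}$ in the denominator, rather than $(1-e^{\pm p a_0\tau_0})^{1/p}$) comes from using the triangle inequality $\|f\|_{L^p(A\cup B;E)}\leq\|f\|_{L^p(A;E)}+\|f\|_{L^p(B;E)}$ on disjoint $A,B$, rather than the sharper identity $\|f\|_{L^p(A\cup B;E)}^p=\|f\|_{L^p(A;E)}^p+\|f\|_{L^p(B;E)}^p$; this converts the exponential-in-$s$ hypothesis into a geometric sum with ratio $e^{a_0\tau_0}$ (resp.\ $e^{-a_0\tau_0}$).

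Case $a_0<0$. I would decompose $[t,\infty)=\bigcup_{k=0}^{\infty}I_k$ with $I_k\equiv[t+k\tau_0,t+(k+1)\tau_0)$. Since the $I_k$ are disjoint, Minkowski gives
\[
\|f\|_{L^p([t,\infty);E)}\;\leq\;\sum_{k=0}^{\infty}\|f\|_{L^p(I_k;E)}.
\]
Every left endpoint $t+k\tau_0$ lies in $[t_0,\infty)$, so the hypothesis applies to each window and yields $\|f\|_{L^p(I_k;E)}\leq C_0 e^{a_0(t+k\tau_0)}$. Because $a_0<0$ the geometric series $\sum_{k\geq 0}e^{a_0 k\tau_0}$ converges to $(1-e^{a_0\tau_0})^{-1}$, giving exactly the claimed inequality.

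Case $a_0>0$. Here the natural decomposition runs from the right endpoint: let $K\in\mathbb N$ be the unique integer with $t-K\tau_0<t_0\leq t-(K-1)\tau_0$, and write
\[
[t_0,t)\;=\;[t_0,t-(K-1)\tau_0)\;\cup\;\bigcup_{k=1}^{K-1}\bigl[t-k\tau_0,\,t-(k-1)\tau_0\bigr).
\]
For $k=1,\dots,K-1$ the left endpoint $s=t-k\tau_0$ satisfies $s\geq t_0$, so the hypothesis bounds the corresponding piece by $C_0 e^{a_0(t-k\tau_0)}$. The leftover window sits inside $[t_0,t_0+\tau_0)$ and is therefore controlled by $C_0 e^{a_0 t_0}\leq C_0 e^{a_0 t}$. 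Combining these via Minkowski and summing $\sum_{k=1}^{\infty}e^{-a_0 k\tau_0}=e^{-a_0\tau_0}/(1-e^{-a_0\tau_0})$ yields
\[
\|f\|_{L^p([t_0,t);E)}\leq C_0 e^{a_0 t}\Bigl(1+\tfrac{1}{e^{a_0\tau_0}-1}\Bigr)=\frac{C_0 e^{a_0 t}}{1-e^{-a_0\tau_0}},
\]
which is the stated bound.

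No genuine analytic obstacle is expected: the proof is purely combinatorial/geometric, resting on subadditivity of the $L^p$ norm plus the closed form of a geometric series. The only bookkeeping point that requires care is the leftmost window in the $a_0>0$ case, which generally fails to be of the canonical form $[s,s+\tau_0)$ with $s\geq t_0$ and so must be absorbed separately by applying the hypothesis at $s=t_0$ and using $e^{a_0 t_0}\leq e^{a_0 t}$; this is precisely the adjustment that produces the denominator $1-e^{-a_0\tau_0}$ rather than the slightly sharper $1-e^{-a_0\tau_0}$-style bound one would obtain from a perfectly aligned partition.
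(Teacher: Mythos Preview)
Your argument is correct and is the standard proof of this lemma. Note that the paper itself does not supply a proof: it simply cites \cite{M2} (Duyckaerts--Merle, Claim~5.8) as the source of the ``integral summation argument,'' and your window-decomposition plus Minkowski plus geometric-series computation is exactly the argument given there. One small cosmetic remark: in your final paragraph the phrase ``denominator $1-e^{-a_0\tau_0}$ rather than the slightly sharper $1-e^{-a_0\tau_0}$-style bound'' repeats the same expression twice; presumably you meant to contrast with $(1-e^{-pa_0\tau_0})^{1/p}$ as you did earlier in the proposal.
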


\subsection{Exponentially small solutions of the linearized equation}
Set $\tilde r=p+1$ and $\frac 2{\tilde q}=N(\frac12-\frac1{\tilde r})$.
We consider $$v\in C^0([t_0,+\infty),H^1),\ \ g\in L^{\tilde q}([t_0,+\infty),W^{1,\tilde r})$$
such that
\begin{align}\label{7.2}
\partial_tv+\mathcal Lv=g,\ \ (x,t)\in\mathbb R^N\times(t_0,+\infty),
\end{align}
\begin{align}\label{7.3}
\|v(t)\|_{H^1}\leq Ce^{-\gamma_1t},\ \ \|g(t)\|_{L^{\tilde q'}([t,+\infty),W^{1,\tilde r'})}\leq Ce^{-\gamma_2t},
\end{align}
where $0<\gamma_1<\gamma_2$.

The following self-improving estimate is important for our analysis.
\begin{lemma}\label{l72}
Under the above assumptions,\\
(a) if $\gamma_2\leq e_0$, then $\|v(t)\|_{H^1}\leq Ce^{-\gamma_2^-t}$,\\
(b) if $\gamma_2> e_0$, then there exists $A\in\mathbb R$ such that
$v(t)=Ae^{-e_0t}\mathcal Y_++w(t)$
with $\|w(t)\|_{H^1}\leq Ce^{-\gamma_2^-t}$.
\end{lemma}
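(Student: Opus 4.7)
The plan is to decompose $v(t)$ along the spectral subspaces of $\mathcal L$ and treat each piece by a distinct mechanism. By Proposition \ref{spectral}, $\sigma(\mathcal L)$ consists of simple real eigenvalues $\pm e_0$ with eigenfunctions $\mathcal Y_\pm$, an $(N{+}1)$-dimensional kernel spanned by $\partial_{x_j}Q$ and $iQ$, and a purely imaginary essential part bounded away from $\pm e_0$. Introduce dual eigenfunctions $\mathcal Z_\pm$ of $\mathcal L^\ast$, biorthogonal to $\mathcal Y_\pm$, and dual functionals $\mathcal K_j^\ast$ to the kernel basis, and define scalar coefficients $\alpha_\pm(t) = \langle v(t),\mathcal Z_\pm\rangle$, $\beta_j(t) = \langle v(t),\mathcal K_j^\ast\rangle$. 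Set $v_\perp(t) = v(t) - \alpha_+(t)\mathcal Y_+ - \alpha_-(t)\mathcal Y_- - \sum_j \beta_j(t)\mathcal K_j$, which lies in the invariant spectral complement. Pairing \eqref{7.2} with each dual element yields the decoupled ODE system
\[
\alpha_\pm'(t)\pm e_0\alpha_\pm(t) = \langle g(t),\mathcal Z_\pm\rangle,\qquad \beta_j'(t) = \langle g(t),\mathcal K_j^\ast\rangle,
\]
together with $\partial_t v_\perp + \mathcal L v_\perp = g_\perp$ on the complement.

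For the stable coefficient $\alpha_-$ and the $\beta_j$'s, the hypothesis $\|v(t)\|_{H^1}\to 0$ lets me integrate backward from $+\infty$, obtaining $\alpha_-(t) = -\int_t^\infty e^{e_0(t-s)}\langle g(s),\mathcal Z_-\rangle\,ds$ and an analogous formula for $\beta_j$. Combined with \eqref{7.3} and the integral-summation Lemma \ref{summation}, this yields $|\alpha_-(t)| + |\beta_j(t)|\leq Ce^{-\gamma_2^- t}$. For the unstable coefficient $\alpha_+$, the integrating factor $e^{e_0 t}$ gives
\[
\alpha_+(t) = e^{-e_0 t}\Bigl(e^{e_0 t_0}\alpha_+(t_0) + \int_{t_0}^t e^{e_0 s}\langle g(s),\mathcal Z_+\rangle\,ds\Bigr).
\]
In case $(a)$ with $\gamma_2\leq e_0$, the integrand grows at most like $e^{(e_0-\gamma_2)s}$ (or polynomially when $\gamma_2 = e_0$), so $|\alpha_+(t)|\leq Ce^{-\gamma_2^- t}$. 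In case $(b)$ with $\gamma_2 > e_0$, the integrand is integrable at $+\infty$; setting $A = \lim_{t\to\infty}e^{e_0 t}\alpha_+(t)$ produces $\alpha_+(t) = Ae^{-e_0 t} - e^{-e_0 t}\int_t^\infty e^{e_0 s}\langle g(s),\mathcal Z_+\rangle\,ds = Ae^{-e_0 t} + O(e^{-\gamma_2 t})$, which precisely isolates the leading term $Ae^{-e_0 t}\mathcal Y_+$ appearing in the statement.

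For the coercive part $v_\perp$ I would use the Lyapunov functional $\Phi$. A direct computation with the self-adjointness of $L_\pm$ gives $B(u,\mathcal L u)=0$, whence $\frac{d}{dt}\Phi(v_\perp) = 2B(v_\perp,g_\perp)$. By construction $v_\perp$ is $B$-orthogonal to all the $\Phi$-null directions $\mathcal Y_\pm,\ \partial_{x_j}Q,\ iQ$, which after adjustment to the orthogonality system \eqref{orth}--\eqref{orth3} of Proposition \ref{coercivity} yields $\Phi(v_\perp)\geq c\|v_\perp\|_{H^1}^2$. Combining this coercivity with the Strichartz estimates for the modified group $e^{it(\Delta-(1-s_c))}$, the perturbative bounds \eqref{eV}--\eqref{eR2} applied to the potential term $Vv_\perp$ buried in $g_\perp$, and Lemma \ref{summation} to convert local-in-time Strichartz bounds into global exponential decay propagated backward from $+\infty$, I obtain $\|v_\perp(t)\|_{H^1}\leq Ce^{-\gamma_2^- t}$. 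Summing the four contributions delivers the claims of $(a)$ and $(b)$.

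The principal obstacle is reconciling the spectral decomposition, which naturally separates $\mathcal Y_\pm$ and $\ker\mathcal L$, with the specific orthogonality conditions \eqref{orth}, \eqref{orth2}, \eqref{orth3} required by Proposition \ref{coercivity}; the algebraic bridge is the nondegeneracy $\int(\Delta Q-(1-s_c)Q)\mathcal Y_1\neq 0$ from Remark \ref{r2.1}, which guarantees that the finite-dimensional change-of-basis matrix between the two systems is invertible. A secondary technical point is that the forcing $g$ is controlled only in the Strichartz norm $L^{\tilde q'}(W^{1,\tilde r'})$ rather than in $L^\infty H^1$, so the propagation of decay for $v_\perp$ cannot be done by a naive pointwise-in-time Gronwall argument; it must be carried out in the Strichartz framework using the modified Schr\"odinger group and Lemma \ref{summation}.
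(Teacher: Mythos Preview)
Your decomposition and the treatment of $\alpha_\pm$ are essentially the same as the paper's (the paper writes $\alpha_+=B(v,\mathcal Y_-)$, $\alpha_-=B(v,\mathcal Y_+)$, which is exactly pairing with the $\mathcal L^\ast$--eigenfunctions after normalization). However, there are two genuine gaps in the rest.

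\textbf{The kernel modes do not decouple.} You assert $\beta_j'(t)=\langle g(t),\mathcal K_j^\ast\rangle$, which would require $\mathcal K_j^\ast\in\ker\mathcal L^\ast$. But $\ker\mathcal L=\mathrm{span}\{(\partial_{x_j}Q,0),(0,Q)\}$ while $\ker\mathcal L^\ast=\mathrm{span}\{(Q,0),(0,\partial_{x_j}Q)\}$, and a direct check gives $\ker\mathcal L\perp\ker\mathcal L^\ast$ in $L^2$. Hence no biorthogonal system of the kind you describe exists: the eigenvalue $0$ carries Jordan blocks, and the ODEs for the $\beta_j$'s cannot be decoupled from $v_\perp$. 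In the paper the equation for $\beta_j$ (see \eqref{7.8'}) retains a term $(\mathcal Lv_\perp,Q_j)$, so $|\beta_j|$ can only be bounded once $\|v_\perp\|_{H^1}$ is.

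\textbf{The one--shot $e^{-\gamma_2^-t}$ bound for $v_\perp$ is not delivered by the argument you sketch.} The energy identity $\tfrac{d}{dt}\Phi(v)=2B(g,v)$ together with coercivity on $G'_\perp$ yields only
\[
\|v_\perp(t)\|_{H^1}^2\leq C\,\Phi(v_\perp)\leq C e^{-(\gamma_1+\gamma_2)t},
\]
i.e.\ $\|v_\perp\|_{H^1}\leq Ce^{-\frac{\gamma_1+\gamma_2}{2}t}$, which then feeds back into the $\beta_j$ estimate to give the same rate. The paper's proof hinges on \emph{iterating} this half--step improvement $\gamma_1\mapsto\tfrac{\gamma_1+\gamma_2}{2}$ to reach $\gamma_2^-$; your proposal omits this bootstrap entirely. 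The Strichartz argument you invoke does not bypass the iteration either: on the spectral complement the linearized evolution is merely bounded (via the conserved coercive quantity $\Phi$), not decaying, so Duhamel alone cannot convert the $L^{\tilde q'}([t,\infty);W^{1,\tilde r'})$ control of $g$ into pointwise $e^{-\gamma_2^-t}$ decay of $\|v_\perp\|_{H^1}$.
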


\begin{proof}
We first recall the quadratic form $\Phi$ defined by \eqref{Phi} and the associated bilinear form $B$ by \eqref{B}.
 We have known that $B(Q_j,h)=0$ and $\|Q_j\|_2=1$ for
any $h\in H^1$ and $j=0,\cdots, N$,
where we denote $$Q_0\equiv \frac{iQ}{\|Q\|_2},\ \ \ Q_j\equiv\frac{\partial_jQ}{\|\partial_jQ\|_2}.$$
By definition, we can obtain $\Phi(\mathcal Y_+)=\Phi(\mathcal Y_-)=0$. Furthermore, we assert that
$B(\mathcal Y_+,\mathcal Y_-)\neq0$. In fact,
if $B(\mathcal Y_+,\mathcal Y_-)=0$, then $B$ and $\Phi$ would be identically $0$
on $span\{\partial_jQ, iQ, \mathcal Y_+, \mathcal Y_-, j=1,\cdots,N\}$ which is of dimension $N+3$.
But $\Phi$ is, by Proposition \ref{coercivity}, positive on $G_\perp$ which is of codimension $N+2$,
 yielding a contradiction by Courant's min-max principle.
Thus,  we can normalize the eigenfunctions $\mathcal Y_+,\mathcal Y_-$
such that $B(\mathcal Y_+,\mathcal Y_-)=1$. Then  $h\in G'_\perp$
is equivalent to
 $$(Q_j,h)=0,\ \ \ B(\mathcal Y_+,h)=B(\mathcal Y_-,h)=0\ \ \ \forall j=0,\cdots,N.$$

Now we decompose $v(t)$ as
\begin{align}\label{v}
v(t)=\alpha_+(t)\mathcal Y_++\alpha_-(t)\mathcal Y_-+\sum_{j=0}^N\beta_j(t)Q_j+v_\perp(t),
\ \ \ \ v_\perp\in G'_\perp,
\end{align}
where
\begin{align}\label{7.8}
&\beta_j(t)=(v(t),Q_j)-\alpha_+(t)(\mathcal Y_+,Q_j)-\alpha_-(t)(\mathcal Y_-,Q_j),\\ \nonumber
&\alpha_+(t)=B(v(t),\mathcal Y_-),\ \ \ \alpha_-(t)=B(v(t),\mathcal Y_+).
\end{align}

 Step 1.  By differentiating the equation 
 on the coefficients \eqref{7.8} and note that $B(\mathcal Lv,v)=0$, we  obtain  that
  \begin{align}\label{7.7'}
\frac d{dt}(e^{-e_0t}\alpha_-(t))=e^{-e_0t}B(g,\mathcal Y_+),\ \ \
\frac d{dt}(e^{e_0t}\alpha_+(t))=e^{e_0t}B(g,\mathcal Y_-),
\end{align}
\begin{align}\label{7.8'}
\beta'_j(t)=(v_t-\alpha'_+\mathcal Y_+-\alpha'_-\mathcal Y_-,Q_j)=&\Big(
g-B(g,\mathcal Y_-)\mathcal Y_+-B(g,\mathcal Y_+)\mathcal Y_--\mathcal Lv_\perp,Q_j\Big)\\ \nonumber
\equiv &(\tilde v,Q_j),
\end{align}
and
\begin{align}\label{7.9'}
\frac d{dt}\Phi(v(t))=2B(g,v).
\end{align}

Step 2.
We now show the following estimates :
 \begin{align}\label{7.7}
|\alpha_-(t)|\leq Ce^{-\gamma_2t},
\end{align}
 \begin{align}\label{7.10}
|\alpha_+(t)|\leq Ce^{-\gamma^-_2t},\ \ \ if\ \ \
\gamma_2\leq e_0\ \ or\ \ e_0\leq\gamma_1
\end{align}
and there exits $A\in\mathbb R$ such that
 \begin{align}\label{7.11}
|\alpha_+(t)-Ae^{-e_0t}|\leq Ce^{-\gamma_2t},\ \ \ if\ \ \
\gamma_2> e_0.
\end{align}
By definition \eqref{B},
\begin{align}\label{B1}
&2B(g,\mathcal Y_+)
 =\int(L_+g_1)\mathcal Y_1+\int(L_-g_2)\mathcal Y_2\\ \nonumber
 =&-\int g_1\Delta\mathcal Y_1+\int(1-s_c) g_1\mathcal Y_1-\int pQ^{p-1} g_1\mathcal Y_1
 -\int g_2\Delta\mathcal Y_2+\int(1-s_c) g_2\mathcal Y_2-\int Q^{p-1} g_2\mathcal Y_2
\end{align}
Hence, for any time interval $I$ with $|I|<\infty$, we have
\begin{align*}
\int_I|B(g,\mathcal Y_\pm)|dt
\leq C |I|^{\frac1{\tilde q'}}\|g\|_{L^{\tilde q}(I,L^{\tilde r})}\|\mathcal Y_\pm\|_{W^{2,\tilde r}},
\end{align*}
which, together with \eqref{7.3}, implies that
\begin{align*}
\int_t^{t+1}|e^{-e_0s}B(g(s),\mathcal Y_+)|ds
\leq Ce^{-e_0t}e^{-\gamma_2t}.
\end{align*}
By Lemma \ref{summation}, we have then
\begin{align}\label{B2}
\int_t^{\infty}|e^{-e_0s}B(g(s),\mathcal Y_+)|ds
\leq Ce^{-e_0t}e^{-\gamma_2t}.
\end{align}
From \eqref{7.3} we know that $e^{-e_0t}\alpha_-(t)$ tends to 0 as $t$ goes to infinity.
Integrating the equation on $\alpha_-$ in  \eqref{7.7'} on $[t,+\infty)$, we obtain that
$|\alpha_-(t)|\leq Ce^{-\gamma_2t}$ showing \eqref{7.7}.

Now, we prove \eqref{7.10}.
In the case $e_0<\gamma_1$, by \eqref{7.3}, we have that
$e^{e_0t}\alpha_+(t)$ tends to 0 as $t$ goes to infinity.
By similar estimates as \eqref{B2}, we also have that
\begin{align*}
\int_t^{\infty}|e^{e_0s}B(g(s),\mathcal Y_-)|ds
\leq Ce^{e_0t}e^{-\gamma_2t}.
\end{align*}
Integrating the equation on $\alpha_+$ in  \eqref{7.7'} on $[t,+\infty)$, we obtain that
$|\alpha_+(t)|\leq Ce^{-\gamma_2t}$.
In the case $\gamma_1\leq e_0<\gamma_2$, also by \eqref{7.3},
\begin{align*}
\int_t^{t+1}|e^{e_0s}B(g(s),\mathcal Y_-)|ds
\leq Ce^{e_0t}e^{-\gamma_2t},
\end{align*}
which together with Lemma \ref{summation} gives that
\begin{align*}
\int_{t_0}^{\infty}|e^{e_0s}B(g(s),\mathcal Y_-)|ds
\leq Ce^{e_0{t_0}}e^{-\gamma_2{t_0}}<\infty.
\end{align*}
By \eqref{7.7'}, $e^{e_0t}\alpha_+(t)$ satisfies the Cauchy criterion as $t\rightarrow+\infty$.
Then, there exists $A$ such that
$\lim_{t\rightarrow+\infty}e^{e_0t}\alpha_+(t)=A$
and $$|\alpha_+(t)-A|\leq Ce^{e_0t}e^{-\gamma_2t},$$
showing \eqref{7.11}.

In the case $\gamma_1<\gamma_2\leq e_0$, integrating the equation on $\alpha_+$ in \eqref{7.7'} on $[0,t]$,
we obtain that
$$\alpha_+(t)=e^{-e_0t}\alpha_+(0)+e^{-e_0t}\int_0^te^{e_0s}B(g,\mathcal Y_-)ds,$$
which, by \eqref{7.3}, yields that
$$\left|\int_0^te^{e_0s}B(g,\mathcal Y_-)ds\right|\leq
\begin{cases}
Ce^{e_0t}e^{-\gamma_2t},\ \ &\gamma_2<e_0,\\
Ct,\ \ &\gamma_2=e_0.
\end{cases}$$
This shows \eqref{7.10} in this case.

In the following steps, we prove
 Lemma \ref{l72}  under the conditions \eqref{7.7}, \eqref{7.10} and \eqref{7.11}.

 Step 3.
 We first do with the case $\gamma_2\leq e_0$ or $\gamma_2>e_0$ and $A=0$.
 By step 2, we have got in this case that
 \begin{align}\label{7.13}
|\alpha_+(t)|+|\alpha_-(t)|\leq Ce^{-\gamma^-_2t},\ \ \ \forall t\geq t_0.
\end{align}
Since $$\int_t^{t+1}B(g,v)ds\leq Ce^{-(\gamma_1+\gamma_2)t},$$
we have, by Lemma \ref{summation}, that
$$\int_t^{\infty}B(g,v)ds\leq Ce^{-(\gamma_1+\gamma_2)t}.$$
By \eqref{7.9'} and $|\Phi(v(t))|\leq C\|v(t)\|^2_{H^1}\rightarrow0$ as $t\rightarrow+\infty$,
we have that  $|\Phi(v(t))|\leq Ce^{-(\gamma_1+\gamma_2)t}$.
Note that $\Phi(v)=B(v,v)=B(v_\perp,v_\perp)+2\alpha_+\alpha_-$, so we obtain from
 Proposition \ref{coercivity} and \eqref{7.13} that
  \begin{align}\label{7.19'}
\|v_\perp\|_{H^1}^2\leq C|B(v_\perp,v_\perp)|\leq Ce^{-(\gamma_1+\gamma_2)t}.
\end{align}

Now we turn to estimate the decay of $\beta_j$.
By \eqref{7.8} and the above step, we know that  $|\beta_j(t)|\rightarrow0$
as $t\rightarrow+\infty$.
Moreover, by the notation of $\tilde v$,
 \begin{align*}
 \int_t^{t+1}|(\tilde v,Q_j)|ds\leq C\Big(e^{-\gamma_2}+\int_t^{t+1}|(\mathcal Lv_\perp,Q_j)|ds \Big)
 \leq C\Big(e^{-\gamma_2}+\|v_\perp\|_{L^\infty H^1} \Big)
 \leq Ce^{-(\frac{\gamma_1+\gamma_2}{2})t}.
\end{align*}
Thus by \eqref{7.8'} and Lemma \ref{summation}, we obtain that
  \begin{align}\label{beta}
|\beta_j(t)|\leq Ce^{-(\frac{\gamma_1+\gamma_2}{2})t}.
\end{align}
Thus,  we have got that $v$ and $g$ satisfy the assumption \eqref{7.3}
with $\gamma_1$ replaced by $\gamma'_1=\frac{\gamma_1+\gamma_2}{2}$.
Finally, by an iteration argument,
we can  obtain  that
\begin{align}\label{7.17}
\|v\|_{H^1}\leq Ce^{-\gamma^-_2t}
\end{align}
in the case $\gamma_2\leq e_0$ or $\gamma_2>e_0$ and $A=0$.

 Step 4. We finish the proof of Lemma \ref{l72} by dealing with the case $\gamma_2>e_0$ and
 $A\neq0$.
In this case, it suffices to assume $\gamma_1\leq e_0$ since, otherwise, we can take $A=0$ by Step 2.
Let $\tilde v(t)\equiv v(t)-Ae^{-e_0t}\mathcal Y_+$, it holds that
$$\partial_t\tilde v(t)+\mathcal L\tilde v(t)=g(t),\ \ \ \|\tilde v\|_{H^1}\leq Ce^{-\gamma_1t}.$$
We consider $\tilde\alpha_+(t)=B(\tilde v(t),\mathcal Y_-)$, which is the corresponding coefficient of $\mathcal Y_+$
in the decomposition of $\tilde v$.
By the decomposition of $ v$, we get that $\tilde\alpha_+(t)=B(v(t)-Ae^{-e_0t}\mathcal Y_+,\mathcal Y_-)=\alpha_+(t)-Ae^{-e_0t}$.
Thus by
 \eqref{7.11}, we have that
$|\tilde\alpha_+(t)|\leq Ce^{-\gamma_2^-t}$, turning back to the case discussed in Step 3.
As a consequence,
$$\|v(t)-Ae^{-e_0t}\mathcal Y_+\|_{H^1}=\|\tilde v(t)\|_{H^1}\leq Ce^{-\gamma^-_2t},$$
which conclude the proof of Lemma \ref{l72}.

\end{proof}

\subsection{Uniqueness}

We prove Proposition \ref{p71}. For $u$ satisfies the hypothesis, we write $u=e^{i(1-s_c)t}(Q+h)$.

Step 1. We show that if $e_0^-$ is any positive number such that $e_0^-<e_0$,
then for any $t\geq t_0$,
\begin{align}\label{7.18}
\|h(t)\|_{H^1}\leq Ce^{-e_0^-t}.
\end{align}
Indeed, from the equation \eqref{linea2},  by Strichartz's estimate and \eqref{eS},
we know from the local existence theory that, for any $(q,r)\in\Lambda_0$,
$$\|h\|_{L^q([t_0,\infty);W^{1,r})}\leq C\|h(t_0)\|_{H^1}
\leq Ce^{-ct}.$$
This, in turn, implies that  $\|R(h)\|_{L^{\tilde q'}([t_0,\infty);W^{1,\tilde r'})}\leq Ce^{-2ct}$
satisfying
the assumptions of Lemma \ref{l72} with
$\gamma_1=c, \gamma_2=2c.$
If $2c>e_0$, the proof is complete; otherwise, we get by Lemma \ref{l72}  that $\|h(t)\|_{H^1}\leq Ce^{-2c^-t}$
and then \eqref{7.18} follows by iteration arguments.

Step 2. Consider the solution $U^A$ constructed in Proposition \ref{UA}
and write $U^A=e^{i(1-s_c)t}(Q+h^A)$.
We show that there exists $A\in \mathbb R$ such that for all $\gamma>0$,
there exists $C>0$ such that for any $t\geq t_0$,
\begin{align}\label{7.19}
\|h(t)-h^A(t)\|_{H^1}\leq Ce^{-\gamma t}.
\end{align}
According to Step 1, $h$ fulfills the assumptions of Lemma \ref{l72} with
$\gamma_1=e_0^-, \gamma_2=2e_0^-$. Thus, there exists $A\in\mathbb R$ such that
\begin{align}\label{7.20}
\|h(t)-Ae^{-e_0t}\mathcal Y\|_{H^1}\leq Ce^{-2e_0^- t}.
\end{align}
By the asymptotic development of $h^A$ obtained in Section 3,
$$\|h^A(t)-Ae^{-e_0t}\mathcal Y\|_{H^1}\leq Ce^{-2e_0^- t}.$$
Thus, \eqref{7.20} implies \eqref{7.19} for any $\gamma<2e_0.$
We next show that if \eqref{7.19} holds for some $\gamma>e_0$, then
it holds for $\gamma'=\gamma+\frac12 e_0$.
In fact, since $h-h^A$ solves the equation
$$\partial_t(h-h^A)+\mathcal L(h-h^A)=R(h)-R(h^A).$$
Again from the local well-posedness theory, for any admissible pair $(q,r)\in\Lambda_0$,
$$\|h-h^A\|_{L^q([t_0,\infty);W^{1,r})}\leq C\|h(t_0)-h^A(t_0)\|_{H^1}
\leq Ce^{-\gamma t},$$
which in turn gives by \eqref{eR} that
$\|R(h)-R(h^A)\|_{L^{\tilde q}([t_0,\infty);W^{1,\tilde r})}\leq Ce^{-(e_0+\gamma)t}$.
Thus,  $h-h^A$ fulfills the conditions of Lemma \ref{l72} with $\gamma_1=\gamma, \gamma_2=\gamma+e_0$.
Then we get \eqref{7.19} with $\gamma$ replaced by $\gamma+\frac12 e_0$.
By iteration, \eqref{7.19} holds for any $\gamma>0$. 
Thus, we have obtained that $\|u-U^A\|_{H^1}\leq Ce^{-\gamma t}$ for any $\gamma>0$ and any $t\geq t_0$.
By the definition of $U^A$,  we obtain
\begin{align*}
\|u-e^{i(1-s_c)t}(Q+\mathcal V_{k_0}^A(t))\|_{H^b}\leq C e^{-(k_0+\frac12)e_0t}
\end{align*}
with $\mathcal  V_{k_0}^A$ and $ k_0$  constructed in Proposition \ref{ZA}. Then,
by the uniqueness argument in the proof of Proposition \ref{UA}, we get then $u=U^A$,
concluding  Proposition \ref{p71}.
\ \ \ \ \ \ \  \ \ \ \ \ \ \ \ \ \ \ \ \ \ \ \ \ \ \ \ \ \ \ \ \ \ \ \ \ \ \ \ \ \ \ \ \ \ \ \ \ \ \ \ \ \ \ \ \ \ \ \ \ \ \ \ \ \
 \ \ \ \ \ \ \ \ \ \ \ \ \ \ \ \ \ \ \ \ \ \ \ \ \ \ \ \ $\Box$
\subsection{Proof of the classification result}

We finish the proof of Theorem \ref{th3} in this subsection.
We first claim that if $A\neq0$, $U^A=Q^+$ for $A>0$ or $U^A=Q^-$ for $A<0$
up to a translation in time and a multiplication by a complex number of modulus 1.
Indeed, by \eqref{3.1},
\begin{align}\label{7.23}
Q^\pm(t)=e^{i(1-s_c)t}Q\pm e^{-e_0t_0}e^{(i-e_0)t}\mathcal Y_++O(e^{-2e_0t})\ \ in \ \ H^1.
\end{align}
Fix $A>0$. Let $t_1=-t_0-\frac 1{e_0}\log A$ such that $e^{-e_0(t_0+t_1)}=A$.
By \eqref{3.1} and \eqref{7.23}, we obtain that
\begin{align}\label{7.24}
e^{-it_1}Q^+(t+t_1)=e^{i(1-s_c)t}Q+ e^{-e_0(t_0+t_1)}e^{(i-e_0)t}\mathcal Y_++O(e^{-2e_0t})
=U^A+O(e^{-2e_0t})\ \ \ in \ \ H^1.
\end{align}
On the other hand, $e^{-it_1}Q^+(t+t_1)-e^{i(1-s_c)t}Q\rightarrow0$ exponentially in $H^1$ as $t\rightarrow+\infty$.
By Proposition
\ref{p71}, there exists $\tilde A$ such that $e^{-it_1}Q^+(t+t_1)=U^{\tilde A}.$
By \eqref{7.24}, we have $\tilde A=A$ and thus $U^A=e^{-it_1}Q^+(t+t_1)$. The case $A<0$ can be shown similarly.

Let $u$ satisfy the hypothesis of Theorem \ref{th3}. We rescale $u$ such that
$E(u)=E(Q),\ \ M(u)=M(Q).$

If $\|\nabla u_0\|_2=\|\nabla Q\|_2$, by the variational characterization of $Q$,
 $u=e^{i(1-s_c)t}Q$ up to the symmetries of the equation which yields case (b).

If $\|\nabla u_0\|_2<\|\nabla Q\|_2$ and assume that $u$ does not scatter for both positive and
negative times. Replacing $u(x,t)$ by $\bar u(x,-t)$ if necessary, we may assume $u$
does not scatter for positive times. By Proposition \ref{p61}, there exist
$\theta_0\in\mathbb R, x_0\in\mathbb R^N$ and $c,C>0$ such that
$\| u(t)-e^{i(1-s_c)t+i\theta_0}Q(\cdot-x_0)\|_{H^1}\leq Ce^{-ct}$ for $t>0$.
Hence, $v(x,t)=e^{-i\theta_0}u(x+x_0,t)$ satisfies the assumptions of Proposition \ref{p71}, which shows that
$v=U^A$ for some $A$. Since  $\|\nabla u_0\|_2<\|\nabla Q\|_2$, by Remark \ref{label},
the parameter $A$
should be negative. Thus, from the arguments in the first paragraph 
of this subsection, we get that $v=Q^-$ up to the symmetries of the equation, yielding case (a).

We can show case (c)  similar to  case (a) in view of Proposition \ref{p51}
and Proposition \ref{p71} and conclude  the proof of Theorem \ref{th3}.
\ \ \ \ \ \ \  \ \ \ \ \ \ \ \ \ \ \ \ \ \ \ \ \ \ \ \ \ \ \ \ \ \ \ \ \ \ \ \ \ \ \ \ \ \ \ \ \ \ \ \ \ \ \ \ \ \ \ \ \ \ \ \ \ \
$\Box$




\end{document}